\documentclass[12pt]{amsart}
\usepackage{style,macros}

\title{Reconstruction of graphs and surfaces from derived categories}
\date{\today}

\author{Oscar Harr}
\address{Matematiska institutionen\\Stockholms universitet\\106 91 Stockholm\\Sweden}
\email{oscar.harr@math.su.se}


\begin{document}
\begin{abstract}
  A pair of topological spaces $X$ and $Y$ is a \emph{Fourier--Mukai (FM) pair} if there is an equivalence of $\infty$-categories of derived sheaves of abelian groups \[\Shv X[\mathrm D(\ZZ )]\simeq\Shv Y[\mathrm D(\ZZ )].\] An FM pair $(X,Y)$ is \emph{non-trivial} if $X$ and $Y$ are not homeomorphic. We consider the question of whether non-trivial FM pairs $(X,Y)$ exist for which $X$ and $Y$ are both ``nice'' spaces, which in this article means admitting a triangulation. We show that the answer is no if $X$ is second-countable and either (i) one-dimensional, (ii) a surface, or (iii) the 3-sphere. In order to prove this, we define two invariants of dualizable categories. The first invariant is an extension of \emph{Tate--Hochschild cohomology} (aka singular Hochschild cohomology) to a class of dualizable categories satisfying a countability assumption. For a homology manifold $X$, we relate the Tate--Hochschild cohomology of $\Shv X[\mathrm D(\mathbb Z)]$ to the homology at infinity of $X$ (in the sense of proper homotopy theory). The second invariant takes the form of a subcategory $\reg{\mathcal D}\subseteq\mathcal D$ called the \emph{Gorenstein locus}, defined for every dualizable stable category $\mathcal D$. For a triangularizable space $X$ and a field $k$, we show that the Gorenstein locus of $\Shv X[\mathrm D(k)]$ is equal to the full subcategory of sheaves supported on the $k$-homology manifold locus of $X$.
\end{abstract}
\maketitle
\tableofcontents
\thispagestyle{empty}
\newpage
\section{Introduction}
\vspace*{0.5cm}%
\begin{flushleft}
  \hspace*{.05\textwidth}%
  \begin{minipage}{0.7\textwidth}
    \begin{footnotesize}
      ``Hallo!'' said Piglet, ``what are \textit{you} doing?'' ($\ldots$) \\
      ``Tracking something,'' said Winnie-the-Pooh very mysteriously.\\
      ``Tracking what?''\\
      ``That's just what I ask myself. I ask myself, What?''\\
      \hphantom{space} --- \textit{Winnie-the-Pooh}, A.A.~Milne
    \end{footnotesize}
  \end{minipage}
\end{flushleft}
\medskip The starting point of this article is the following question:
\begin{ques}\label{question:information-about-topology}
  What can we learn about a topological space by looking only at its derived category of sheaves?
\end{ques}
As a prototypical example of a result with this theme, Efimov has shown
\begin{nonothm}[{Efimov~\cite[Thm~6.1]{Efimov2025}}]
  Let $X$ be a locally compact Hausdorff space and let $\mathcal C$ be a dualizable stable $\infty$-category, and let $L\colon\Prdual\to\mathcal T$ be a finitary localizing invariant.
  There is an equivalence
  \begin{equation}
    \label{eq:localizing-invariant-of-sheaves}
    L(\Shv X[\mathcal C])
    \simeq\cptGlob (X,L(\mathcal C)_X),
  \end{equation}
  natural in proper maps and open immersions.
\end{nonothm}
Here $L(\mathcal C)_X$ denotes the constant sheaf on $X$ with value $L(\mathcal C)$. We can read the theorem as saying that $\Shv X[\mathcal C]$ remembers something about locally constant sheaves on $X$. Also in this spirit, applying the compact objects functor ${-}^{\aleph_0}\colon\Prdual\to\smallcats$ to $\Shv X[\mathcal C]$ recovers the full subcategory spanned by locally constant $\mathcal C$-valued sheaves on $X$ that have compact stalks and compact support \cites[][Thm~2.6]{Harr2025}[][Prop~6.15]{Efimov2025}.
\subsection{Statement of results}
In this article, we introduce two invariants of dualizable categories and use them to retrieve information about topological spaces from their derived categories.

The first invariant that we consider is \tdef{Tate--Hochschild cohomology}, which we denote $\mdef{\contsgHH{-}}$. Previously, several authors have studied the Tate--Hochschild cohomology of various types of associative algebras~\cite{Eu_Schedler2009,Bergh_Jorgensen2013,Wang2021,Keller2018,Rivera_Wang2019}. We define Tate--Hochschild cohomology for dualizable stable $\infty$-categories subject to a condition called \tdef{second-countability} (\cref{defn:second-countable}). (The derived category of a locally compact Hausdorff space $X$ is second-countable if and only if $X$ is second-countable, i.e.~admits a countable basis of opens.) Our main result about Tate--Hochschild cohomology is
\begin{mainthm}\label{mainthm:tate-hochschild-cohomology}
  Let $k$ be a commutative ring. For a second-countable $d$-dimensional $k$-homology manifold $X$, there is an equivalence
  \begin{equation}
    \label{eq:intro-sing-hochschild-of-topological-space}
    \contsgHH{\Shv X[\mathrm D(k)]}\simeq
      \mathrm C^\infty_\bullet (X;k)\lbrack -2d+1\rbrack,
  \end{equation}
  where $\mathrm C^\infty_\bullet (X;k)$ is the complex of singular chains at infinity (recalled in~\cref{subsec:topology}).
\end{mainthm}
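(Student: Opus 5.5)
The plan is to realize Tate--Hochschild cohomology of a second-countable dualizable category as a cofiber (or "Tate") construction comparing Hochschild cohomology with a norm-type map. Concretely, I expect the definition (\cref{defn:second-countable} and the subsequent construction) to express $\contsgHH{\mathcal D}$ as the cofiber of a natural map from a "compactly supported" or "homological" Hochschild object to ordinary Hochschild cohomology $\mathrm{HH}^\bullet(\mathcal D)=\operatorname{End}(\mathrm{id}_{\mathcal D})$. For $\mathcal D=\Shv X[\mathrm D(k)]$, I would compute both sides geometrically and identify the cofiber with chains at infinity.

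\textbf{Step 1: Hochschild cohomology.} Using $\Shv X[\mathrm D(k)]\otimes\Shv X[\mathrm D(k)]\simeq\Shv{X\times X}[\mathrm D(k)]$, the identity functor corresponds to the kernel $\Delta_*k_X$. Hence $\mathrm{HH}^\bullet\simeq\operatorname{End}(\Delta_*k_X)\simeq\Gamma(X,\Delta^!\Delta_*k_X)$. For a $k$-homology manifold of dimension $d$, the sheaf $\Delta^!k_{X\times X}$ is locally $k[-d]$, twisted by the orientation sheaf. Combining this with the self-duality $\omega_X\simeq or_X[d]$, I expect $\mathrm{HH}^\bullet\simeq\Gamma(X,k)$, i.e.\ the cochains $\mathrm C^\bullet(X;k)$, up to the relevant twist. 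In the same way, the "norm" source, built from the dualizable structure (evaluation composed with coevaluation, or $\mathrm{HH}_\bullet$ with a shift), should become compactly supported cochains: $\Gamma_c(X,\omega_X)[-2d]$-type terms, i.e.\ $\Gamma_c(X,or_X)[-d]\simeq\mathrm C_\bullet(X;k)[-2d]$ by Poincaré duality for homology manifolds.

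\textbf{Step 2: identify the map.} The natural comparison map should be the forget-supports map $\Gamma_c(X,-)\to\Gamma(X,-)$ applied to the orientation sheaf, shifted. Its cofiber is $\Gamma_\infty$ (sections at infinity, $\operatorname{colim}_K\Gamma(X\setminus K,-)$ up to shift). By Poincaré--Lefschetz duality on the complements $X\setminus K$ as $K$ ranges over a countable compact exhaustion, this is dual to the homology at infinity. This exhaustion exists because $X$ is second-countable, which is where that hypothesis enters, via a countable sequential colimit and limit whose $\lim^1$ terms are controlled. The result is $\mathrm C^\infty_\bullet(X;k)$ with the shift $[-2d+1]$: $-2d$ from the two dualities and $+1$ from passing to a cofiber.

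\textbf{Main obstacle.} The hardest part is Step 2: checking that the abstractly defined map in the categorical definition of $\contsgHH{-}$ really agrees with the geometric forget-supports map under the Step 1 identifications. I would first verify this locally on $\mathbb R^d$, where everything is explicit. I would then use the naturality of the construction in open immersions, which I expect the paper to establish, to glue along the exhaustion, including tracking the orientation twists. A secondary difficulty is matching the sequential (co)limit over compacts with the definition of $\mathrm C^\infty_\bullet$ from \cref{subsec:topology}.
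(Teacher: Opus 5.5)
There is a genuine gap in Step 2, and it cannot be closed: the identification you hope for is false. In Step 1 you correctly get $\mathrm{HH}^\bullet=\coev{}^R(\coev{}(k))\simeq p_*\Delta^!\Delta_*k_X$. But $\Delta$ is a closed embedding, so $\Delta^!\Delta_*\simeq\mathrm{id}$, and this is exactly $\Gamma(X,k_X)=\mathrm C^\bullet(X;k)$, with no twist. The sheaf $\Delta^!k_{X\times X}\simeq\omega_X[-2d]\simeq\mathrm{or}_X[-d]$ governs only the source.

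You only anticipate the source; computing it needs an explicit compact exhaustion of $\Delta_*k_X$. The paper uses $C_{2n}=\cofib(h_{U_{2n}}\to h_{W_{2n}})$ and the lemma $\Delta^!h_{V\times V}\simeq j_!\omega_V[-2d]$ to get $\colim_n\coev{}^R(C_{2n})\simeq\Gamma_c(X,\omega_X)[-2d]$.

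Tracing the comparison map through $C_n\to h_{X\times X}\to\Delta_*k_X$, it is the composite
\[
\Gamma_c(X,\omega_X)[-2d]\longrightarrow\Gamma(X,\omega_X)[-2d]\simeq\Gamma(X,\Delta^!k_{X\times X})\longrightarrow\Gamma(X,\Delta^!\Delta_*k_X)\simeq\Gamma(X,k_X).
\]
The first map forgets supports. The second is induced by $k_{X\times X}\to\Delta_*k_X$, i.e.\ by the canonical map $\Delta^!k_{X\times X}\to\Delta^*k_{X\times X}$; for an oriented manifold it is cup product with the Euler class. For $d\geq 1$ this second map is not an equivalence: on $\RR^d$ it is a map $k[-d]\to k$, hence zero.

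Your proposed local check on $\RR^d$ would therefore expose the problem rather than confirm the claim. The definition gives $\cofib(k[-2d]\to k)\simeq k\oplus k[-2d+1]$, whereas $\mathrm C^\infty_\bullet(\RR^d;k)[-2d+1]\simeq k[-d]\oplus k[-2d+1]$. More generally, for $d\geq 2$ the left side contains $H^0(X;k)$ in $\pi_0$, because the source has $\pi_0=H_{2d}(X;k)=0$. By contrast, $\pi_0$ of the right side is $H^\infty_{2d-1}(X;k)=0$ for dimension reasons.

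What the computation actually yields, by the octahedral axiom, is a cofiber sequence
\[
\mathrm C^\infty_\bullet(X;k)[-2d+1]\longrightarrow\contsgHH{\Shv X[\mathrm D(k)]}\longrightarrow\cofib\bigl(\Gamma(X,\omega_X)[-2d]\to\Gamma(X,k_X)\bigr).
\]
Only for $d=0$ is the second map the identity, so only then does your argument go through.

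The paper's proof has the same shape as your plan and breaks at the same point. After noting that $\coev{}^R(h_{U_i})\simeq 0$ for each $i$, it identifies $\coev{}^R(\Delta_*k_X)$ with $p_*\Delta^!h_{X\times X}$. That amounts to assuming $\coev{}^R(h_{(X\times X)\setminus\Delta(X)})\simeq 0$. But $h_{(X\times X)\setminus\Delta(X)}=\colim_i h_{U_i}$, and $\coev{}^R$ does not preserve this colimit; that discontinuity is exactly what $\contsgHH{-}$ is meant to detect. Indeed $\Delta^!h_{(X\times X)\setminus\Delta(X)}\simeq\fib(\omega_X[-2d]\to k_X)$, which on $\RR^d$ is $k[-1]\oplus k[-d]\neq 0$.

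So the obstacle you singled out is real and is not overcome in the paper either. With the given definition, the stated equivalence already fails for $X=\RR^d$, $d\geq 1$. One only gets the cofiber sequence above, unless the definition (for instance its target) is changed.

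A minor point: second-countability is used only to make $\Delta_*k_X$ $\aleph_1$-compact, so that the sequential exhaustion in the definition exists. No $\lim^1$-terms enter.
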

Outside topology, the derived category $\mathrm D(A)$ of an associative $k$-algebra $A$ is second-countable if and only if $A$ admits a possibly infinite resolution by finite-rank projectives as a module over the enveloping algebra $A\otimes_{k} A^{\mathrm{op}}$. For these algebras, our definition of Tate--Hochschild cohomology agrees with the established one. Note that in algebraic geometry Tate--Hochschild cohomology measures non-smoothness, whereas~\cref{mainthm:tate-hochschild-cohomology} shows that in topology it is instead related to non-properness.

\medskip

Our second main result is perpendicular to the results on locally constant sheaves mentioned at the start of this article, insofar as it revolves around the \emph{failure} of the Verdier dualizing sheaf to be locally constant near singularities of $X$. For every dualizable stable $\infty$-category $\mathcal D$, we define a full subcategory $\mdef{\reg{\mathcal D}}\subseteq\mathcal D$ called the \tdef{Gorenstein locus} of $\mathcal D$. Intuitively, the Gorenstein locus consists of objects of $\mathcal D$ that behave as if they are ``supported on smooth points'' when probed by the Serre functor on $\mathcal D$.
\begin{mainthm}\label{mainthm:gorenstein-locus}
  Let $k$ be a field, let $X$ be a locally compact triangularizable space, and put
  \[
    \mdef{\hreg X} = \lbrace x\in X\mid\dualizingshf X\text{ is invertible in a neighborhood of x}\rbrace.
  \]
  The Gorenstein locus $\reg{\Shv X[\mathrm D(k)]}\subseteq\Shv X[\mathrm D(k)]$ is equal to the full subcategory of sheaves supported on $\hreg X$.
\end{mainthm}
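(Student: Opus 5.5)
We do not yet have the definition of the Gorenstein locus, so I will work with the natural guess suggested by the introduction. The Serre functor on a dualizable category $\mathcal D$ is the functor $S_{\mathcal D}$ comparing the evaluation and coevaluation pairings. For $\Shv X[\mathrm D(k)]$ it should be tensoring with the dualizing sheaf, $S(F)\simeq F\otimes\dualizingshf X$, possibly up to shift or inverse. The Gorenstein locus should then consist of those $F$ near which the Serre functor "acts invertibly", i.e.\ the largest full subcategory on which $S$ restricts to an autoequivalence, or the objects for which a canonical comparison map built from $S$ and its adjoint is an equivalence. The plan has three steps.

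\emph{Step 1: identify the Serre functor of sheaves.} Using Verdier duality $\Shv X[\mathrm D(k)]^\vee\simeq\Shv X[\mathrm D(k)]$, with the pairing given by $\cptGlob(X,-\otimes-)$, I would identify $S$ with $-\otimes\dualizingshf X$. The inverse direction is then described by $\mathcal{H}om(\dualizingshf X,-)$ or $f^!$-type functors. I expect this identification to be formal from the definition.

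\emph{Step 2: localize.} I would show that the Gorenstein locus is compatible with open–closed recollements. For $j\colon U\hookrightarrow X$ open with closed complement $Z$, the subcategories $j_!\Shv U$ and $i_*\Shv Z$ should interact with the Serre functor through the identities $j^*\dualizingshf X=\dualizingshf U$ and $i^!\dualizingshf X=\dualizingshf Z$. This should reduce the statement to a local, support-theoretic one: $F$ lies in $\reg{\mathcal D}$ if and only if it does so locally on $X$. Granting that, the inclusion "sheaves supported on $\hreg X$ are Gorenstein" is immediate. On $\hreg X$ the sheaf $\dualizingshf X$ is invertible, so tensoring with it is an autoequivalence preserving the subcategory $\Shv{\hreg X}$, which sits inside $\Shv X$ via $j_!$ for the open set $\hreg X$.

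\emph{Step 3: the converse, which I expect to be the main obstacle.} Suppose $F$ is Gorenstein and $x\in\operatorname{supp}F$ with $x\notin\hreg X$. Triangulability lets us take a small open star neighborhood of $x$ that is a cone on the link $L_x$. Over a field, with the triangulation constructible, the stalk of $\dualizingshf X$ at a point $y$ of an open simplex is the local homology $\mathrm H_*(X,X-y;k)$. Failure of invertibility near $x$ means that some point $y$ arbitrarily close to $x$ has local homology that is not one-dimensional: it is either zero in a degree or of total dimension at least $2$. Field coefficients matter here, because they make "invertible" equivalent to "stalks one-dimensional, concentrated in one degree, locally constant".

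I would then test the Gorenstein condition on $F$ against skyscraper-type objects, such as $i_{y*}k$ or the constructible sheaves $j_!k$ on open stars. This should produce a map of stalks $F_y\otimes(\dualizingshf X)_y\to\cdots$ that is forced to be an isomorphism, contradicting the rank computation. The difficulty is that $F$ is arbitrary, not constructible. Since $y\in\operatorname{supp}F$ only near $x$, I would pick $y$ in the support on a top-dimensional stratum of the closure of the non-manifold locus, where $\dualizingshf X$ is locally constant but not invertible, and then argue stalkwise. Handling possibly non-constructible $F$, and the passage from "near $x$" to an actual stalk, is where I expect to spend the effort.
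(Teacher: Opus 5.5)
Your Step~1 matches the paper ($S\simeq\omega_X\otimes-$). There is nevertheless a genuine gap, and it starts with the definition. In the paper, $F$ is \emph{regular} if $\operatorname{Hom}(F,G)\to\operatorname{Hom}(\omega_X\otimes F,\omega_X\otimes G)$ is an equivalence for \emph{every} $G\in\mathrm{Shv}(X;\mathrm D(k))$.

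Your first guess only asks that $S$ restrict to an autoequivalence of some subcategory containing $F$. That would make the statement false. Take the cone point $\infty$ of $X=C(S^1\vee I)$ in \cref{exam:hreg-not-stalkwise-condition}: there $\omega_{X,\infty}\simeq k[2]$, so $S$ acts on the skyscrapers $\infty_*V$ as the shift $[2]$, an autoequivalence, yet $\infty\notin\hreg X$. So the test objects $G$ must be allowed to live off $\operatorname{supp}F$.

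With the correct definition, your easy inclusion also needs one more (easy) step. For $F'$ on $\hreg X$, the projection formula $\omega_X\otimes j_!F'\simeq j_!(j^*\omega_X\otimes F')$ and the adjunction $j_!\dashv j^*$ identify the map with $\operatorname{Hom}(F',j^*G)\to\operatorname{Hom}(j^*\omega_X\otimes F',j^*\omega_X\otimes j^*G)$. This is an equivalence because $j^*\omega_X$ is invertible (\cref{prop:supported-on-gorenstein-implies-regular}). The locality principle in your Step~2 is neither proved nor needed.

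The serious gap is Step~3. Testing against skyscrapers $y_*k$ gives nothing for $y\notin\operatorname{supp}F$, since both sides vanish. For $y\in\operatorname{supp}F$ it gives exactly that $\omega_{X,y}$ is invertible (\cref{prop:invertible-stalks}). That is strictly weaker than $\operatorname{supp}F\subseteq\hreg X$. For $F=\infty_*k$, the support meets the non-manifold locus only at a point with invertible stalk, so your plan of finding a bad point $y$ inside $\operatorname{supp}F$ cannot start. Stalkwise tests also cannot detect a jump in the degree of $\omega_X$ between $x$ and an adjacent simplex.

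The difficulty is not that $F$ is non-constructible. It is that you must compare $\omega_{X,x}$ with $\omega_X|_{\operatorname{relint}\Delta}$ for every simplex $\Delta\ni x$, including simplices that $F$ does not see, and then invoke \cref{lem:characterization-of-homological-regular-locus}.

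The paper does this with test objects that emanate from $x$. Take an embedded arc $a\colon[0,1]\to X$ with $a(0)=x$ and $a((0,1])\subseteq\operatorname{relint}\Delta$. Let $J$ be the extension by zero of $k_{(0,1)}$, so that $e^!J\simeq k[-1]$ for $e\colon\{0\}\hookrightarrow[0,1]$. Write $\omega_{X,x}\simeq k[d]$.

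If $F|_{\operatorname{relint}\Delta}\simeq 0$, then $a^*F\simeq e_*F_x$. Regularity against $a_!J$, together with $F_x\neq 0$, forces $e^!J\otimes\omega_{X,x}\to e^!(J\otimes a^*\omega_X)$ to be an equivalence, i.e.\ $\omega_X|_{\operatorname{relint}\Delta}\simeq k[d]$.

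If $F$ meets $\operatorname{relint}\Delta$, then $\omega_X|_{\operatorname{relint}\Delta}\simeq k[d']$ by the stalk result. Suppose $d'\neq d$. Then the recollement of $a^*\omega_X$ along $\{0\}\subset[0,1]$ splits. Regularity then forces $a^*F\simeq e_*F_x\oplus j'_!(j')^*a^*F$ to split as well. Testing against $a_*k_{[0,1]}$, using $e^!k_{[0,1]}\simeq 0$, yields $\operatorname{Hom}(F_x,k)\simeq 0$, contradicting $F_x\neq 0$.
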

Note that $\hreg X\subseteq X$ is by construction a (typically non-compact) $k$-homology manifold, and in fact it is the largest open subspace of $X$ with this property. The theorem shows that, at least for finite dimensional locally compact triangularizable spaces,~\cref{question:information-about-topology} can be split into studying derived categories of homology manifolds and the extension problems that arise when gluing homology manifold strata back together.

The subspace $\hreg X$ also appears as the top stratum in the canonical homology manifold stratifications constructed by Goresky--MacPherson~\cite[\S~4.2]{Goresky_Macpherson1983} to prove the topological invariance of intersection cohomology. It would be interesting to know whether the remaining strata (for some choice of perversity) can also be characterized in terms of the category $\Shv X[\mathrm D(k)]$. Note that this would imply a reasonable definition of intersection cohomology for dualizable categories.

\medskip

As in the abstract, say that a pair of (non-homeomorphic) topological spaces $(X,Y)$ is a \tdef{(non-trivial) FM pair} if there is an equivalence of $\infty$-categories 
\[\Shv X[\mathrm D(\ZZ )]\simeq\Shv Y[\mathrm D(\ZZ )].\]
Non-sober topological spaces supply many ``trivial'' non-trivial FM pairs. For instance, $(X,X\times Y)$ is an FM pair for any topological space $X$ and anti-discrete\footnote{Recall that a topological space $Y$ is \tdef{anti-discrete} its only open subsets are $\varnothing$ and $Y$.} space $Y$. A more interesting source of examples comes from the theory of Verdier duality for posets~\cite{Curry2018,Aoki2023}, as we explain in~\cref{exam:verdier-duality-for-posets}. From our point of view, all of these examples are point-set topology pathologies. Among nice topological spaces (which for concreteness and convenience we take to mean triangularizable spaces), we do not know of a single non-trivial FM pair. By applying~\cref{mainthm:tate-hochschild-cohomology,mainthm:gorenstein-locus}, we prove that such non-trivial FM pairs do not exist for some low-dimensional spaces:
\begin{mainthm}\label{mainthm:reconstruction}
  Let $X$ be a locally compact triangularizable space which is second-countable and either 
  \begin{enumerate}[label=(\roman*)]
  \item one-dimensional;
  \item a finite type surface\footnote{Recall that a 2-dimensional manifold, aka a surface, is called \tdef{finite type} if it is the complement of a union of boundary components inside a compact surface.}; or
  \item a 3-sphere.
  \end{enumerate}
  There are no FM pairs of the form $(X,Y)$ with $Y$ locally compact triangularizable.
\end{mainthm}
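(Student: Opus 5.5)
Suppose $\Phi\colon\Shv X[\mathrm D(\ZZ)]\simeq\Shv Y[\mathrm D(\ZZ)]$ is an equivalence with $Y$ locally compact triangularizable. The plan is to extract from $\Phi$ enough invariants to pin down $Y$ among low-dimensional triangulable spaces. We do this in three steps.

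\textbf{Step 1: extract invariants.} First, second-countability of the category is intrinsic, so $Y$ is second-countable. Next, $\Phi$ should be compatible with the $\ZZ$-linear structure. Base change along $\ZZ\to k$ for fields $k=\mathbb F_p,\mathbb Q$ then gives $\Shv X[\mathrm D(k)]\simeq\Shv Y[\mathrm D(k)]$; the model is $\Shv X[\mathcal C]\simeq\Shv X[\mathrm D(\ZZ)]\otimes\mathcal C$. Applying \cref{mainthm:gorenstein-locus}, the Gorenstein loci correspond, so $\Phi$ restricts to $\Shv {\hreg X}[\mathrm D(k)]\simeq\Shv {\hreg Y}[\mathrm D(k)]$. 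Passing to the Verdier quotients, which are recollement complements, it also matches sheaves on the closed singular loci $X\setminus\hreg X$ and $Y\setminus\hreg Y$.

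Further invariants come from the theorem of Efimov quoted above. Taking $L=K$ gives the compactly supported cohomology $\cptGlob(X,K(k))$, and in particular $\mathrm H^*_c(X;k)$ and the Euler characteristic. Applying \cref{mainthm:tate-hochschild-cohomology} to the homology-manifold strata gives the homology at infinity of $\hreg X$, with a degree shift of $-2d+1$. This shift detects $d$, since the chains at infinity live in degrees $\ge 0$ and are nonzero for noncompact manifolds. Dimension should also be recoverable from the cohomological dimension of the category, namely the vanishing of $\mathrm{Ext}$ beyond degree $d$ between compact objects of the Gorenstein locus.

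\textbf{Step 2: the manifold cases (ii) and (iii).} If $X$ is a $\ZZ$-homology manifold, then by Step 1 the Gorenstein locus is everything, so $\hreg Y=Y$. Thus $Y$ is a triangulable homology manifold of the same dimension with the same $\mathrm H^*_c$ and the same homology at infinity.

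In dimension $2$, triangulable homology $2$-manifolds are surfaces. Their classification uses orientability, genus, and number of ends. Orientability is seen by whether $\dualizingshf{}$ is isomorphic to $\ZZ[2]$, i.e.\ whether the Serre functor is a shift. Genus is seen via $\mathrm H^*_c$. The ends are seen via $\mathrm C^\infty_\bullet$, together with the finite type condition, which $\mathrm H^*_c$ plus the ends detect. Punctures versus boundary circles are irrelevant, since "finite type" removes boundary components.

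For $S^3$: $Y$ is a compact triangulable homology $3$-manifold with $\mathrm H^*(Y)\cong \mathrm H^*(S^3)$. Triangulable homology $3$-manifolds are manifolds, since links of vertices are homology $2$-spheres, hence spheres. So $Y$ is a homology $3$-sphere. The hard part is excluding Poincaré-sphere-type examples. For this I would use a finer invariant: the compact objects of $\Shv Y[\mathrm D(\ZZ)]$, which are locally constant sheaves with compact stalks on compact $Y$, recover $\mathrm{Perf}$ of local systems. Their $\pi_1$ data, e.g.\ nontrivial rank-one or finite local systems, must match those of $S^3$, which has none. This forces $\pi_1(Y)=1$, and Perelman gives $Y\cong S^3$. \emph{This $\pi_1$ detection is the main obstacle.} I would try to show that the abelian category of locally constant sheaves of finitely generated free modules is intrinsic, for example as dualizable compact objects, and is equivalent to representations of $\pi_1$. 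Then the profinite completion, or all finite quotients, of $\pi_1$ is recovered, and trivial profinite completion suffices for $3$-manifolds by residual finiteness and geometrization.

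\textbf{Step 3: the graph case (i).} A one-dimensional $X$ is a locally finite graph. Then $\hreg X$ is the union of open edges together with the vertices of valence $2$, and the singular locus $S=X\setminus\hreg X$ is a discrete set of vertices. From Step 1 we get matching categories on the discrete closed sets $S_X$ and $S_Y$; sheaves on a discrete set form $\prod_{s}\mathrm D(\ZZ)$, so its indecomposable pieces recover the bijection of points. We also get matching categories on $\hreg X$, a disjoint union of open intervals and circles, and for $1$-manifolds the classification by components and ends is detected by Step 2.

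The remaining task is to recover the incidence data, i.e.\ which ends of $\hreg X$ abut which singular point. I would read this off the gluing functor $i^*j_*$ of the recollement: for each end, $i^*j_*$ of the constant sheaf on a component is supported at precisely the singular vertices that the component's ends attach to. The valence at $s$ is the rank of $\mathrm H^0(i_s^* j_*\ZZ)$. This recovers the graph up to homeomorphism, subdividing valence-$2$ vertices away. Dimension $0$ points, i.e.\ isolated points, are seen as discrete factors already.
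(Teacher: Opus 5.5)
Your outline for (i) and for boundaryless surfaces follows the paper's closely. The $S^3$ case, however, has a genuine gap exactly where you flag the main obstacle, and the route you suggest does not close it. Dualizability refers to the tensor product, which a bare equivalence $\Phi$ need not preserve; if $\Phi$ were monoidal, Aoki's theorem would already give $Y\cong S^3$. The stable category $\Shv Y[\mathrm D(\CC )]^{\aleph_0}\simeq\Fun (\Pi_\infty Y,\mathrm D^{\mathrm b}(\CC ))$ is intrinsic. Its pointwise t-structure is not a priori transported by $\Phi$, and with it go the abelian category of local systems and any profinite-completion data. Rank-one local systems cannot help either, since homology $3$-spheres have perfect $\pi_1$.

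The missing idea is to use an invariant of the stable category that can nonetheless be \emph{computed} through a t-structure, namely $\mathrm K_0$. By Barwick's theorem of the heart, $\mathrm K_0$ of the compact objects equals $\mathrm K_0$ of finite-dimensional complex $\pi_1Y$-representations, which is free on the irreducibles by Jordan--H\"older. For $S^3$ this is $\ZZ$. If $\pi_1Y\neq 1$, residual finiteness (geometrization plus Hempel) gives a nontrivial finite quotient, hence a nontrivial irreducible representation and rank $\geq 2$. It is this rank count, not the profinite completion, that feeds into Perelman.

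You have also misread ``finite type'': it means the complement of \emph{some} union of boundary components of a compact surface. So $\Sigma$ may still have boundary, for instance a closed disc, an annulus, or $S^1\times [0,1)$. Such $\Sigma$ is not a homology manifold, so your Step 2 says nothing about it. The paper treats this case via the recollement. The Gorenstein locus is $\Shv{\Sigma\setminus\partial\Sigma}[\mathrm D(k )]$ and the Verdier quotient is $\Shv{\partial\Sigma}[\mathrm D(k )]$. Part (i) identifies the latter as $b$ circles, the boundaryless case identifies the interior, and $\Sigma$ is then fixed by $\mathrm{HH}$, by $b$, and by (number of ends of the interior)${}-b$ punctures.

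Two smaller repairs are also needed.
\begin{itemize}
\item Your dimension detection is incomplete. The Tate--Hochschild shift says nothing on compact components, and ``Ext vanishing beyond degree $d$ between compact objects'' is meaningless as stated, since compact objects are closed under shifts. The paper instead uses the Calabi--Yau dimension: the Serre functor is $[d]$ on clopen pieces of the $\mathbb F_2$-Gorenstein locus. Because interiors of maximal simplices lie in $\hreg Y$, this is also what shows the partner in (i) is one-dimensional, which your Step 3 silently assumes.
\item In Step 3, ``the constant sheaf on a component'' is not intrinsic. Use instead that the restricted gluing functor $\mathcal D_E\to\mathcal D_x$ is nonzero iff $x\in\overline E\setminus E$. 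Get multiplicities by rebuilding $\Shv{E\cup\lbrace x\rbrace}[\mathrm D(k)]$ from that gluing functor and testing for nonzero compact objects. Alternatively, read the valence $v$ off $S(x_*k)\simeq x_*k^{v-1}[1]$.
\end{itemize}
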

One can also interpret this theorem as saying that, among triangularizable spaces, spaces of the form (i), (ii), and (iii) can be reconstructed up to homeomorphism from their derived categories. Both these statements and the terminology of FM pairs are inspired by algebraic geometry, especially by a well-known theorem of Bondal and Orlov~\cite{Bondal_Orlov2001}. We will comment on the relationship between the proof of Bondal and Orlov's theorem and our~\cref{mainthm:gorenstein-locus} in~\cref{sec:gorenstein-locus} below. Beyond Bondal and Orlov's theorem, there is a rich theory on the algebro-geometric analog of~\cref{question:information-about-topology}, see~\cite{Orlov2018} for an overview. This is one of several interrelated subjects known as non-commutative geometry.

The main obstacle to extending non-commutative geometry to derived categories of topological spaces is the scarcity of compact objects in the latter; in sharp contrast to the situation in algebraic geometry, derived categories of topological spaces are almost never compactly generated, see~\cite[Prop~3.1]{Harr2025} for a precise statement. Because of this, many techniques from the algebro-geometric story cannot be applied to our topological question, or to other dualizable categories such as those that arise from almost mathematics and formal geometry. For example, none of the three non-commutative notions of dimension considered in~\cite{Elagin_Lunts2021} make sense for non-compactly generated categories. However,~\cref{mainthm:gorenstein-locus} will allow us to determine the dimension of a triangularizable space from its derived category in a somewhat ad hoc way.
\begin{rmk}
  Aoki has shown that the functor
  \begin{equation}
    \label{eq:aoki-tannaka-functor}
    \Shv{-}[\mathrm D(k)]\colon \lbrace\text{sober topological spaces}\rbrace^{\mathrm{op}}\too\CAlg (\Prlst )
  \end{equation}
  is fully faithful for any field $k$~\cite[Thm~B]{Aoki2024}. In particular, if $X$ and $Y$ are sober topological spaces such that $\Shv X[\mathrm D(\ZZ )]\simeq\Shv Y[\mathrm D(\ZZ )]$ \emph{as symmetric monoidal $\infty$-categories} (rather than just as $\infty$-categories), then $X$ and $Y$ must be homeomorphic. It also follows that the functor
  \[
    \Shv{-}[\mathrm D(\ZZ )]\colon \lbrace\text{sober topological spaces}\rbrace^{\mathrm{op}}\too\largecats
  \]
  is conservative, since it can be written as the composite of~\eqref{eq:aoki-tannaka-functor} and the conservative forgetful functor $\CAlg (\Prlst )\to\largecats$. Just as in algebraic geometry,\footnote{where the analog of Aoki's theorem is Balmer's reconstruction theorem~\cite{Balmer2002}} the subtlety of FM partners arises from the possible existence of ``exotic'' equivalences $\Shv X[\mathrm D(\ZZ )]\simeq \Shv Y[\mathrm D(\ZZ )]$ that are not combinations of functors induced by maps of underlying spaces and the operation of tensoring with invertible local systems.  
\end{rmk}
\subsection{Acknowledgements}
Part (i) in \Cref{mainthm:reconstruction} was initially stated in my PhD thesis at the University of Copenhagen, in which an incorrect proof is given. I thank Shaul Barkan for many energetic (but ultimately unsuccessful) conversations about constructing interesting Fourier--Mukai pairs in topology. Thanks also to Achim Krause for asking me about reconstruction of two-dimensional spaces, which led me to~\cref{mainthm:tate-hochschild-cohomology} and part (ii) of~\cref{mainthm:reconstruction}. I learned about \v{S}ne\u{i}der's theorem from a heuristic search\footnote{Specifically, the prompt was ``Let $X$ be a locally compact Hausdorff space. Assume that both $X$ and $(X\times X)\setminus\Delta$ are $\sigma$-compact, where $\Delta\subseteq X\times X$ denotes the diagonal. Does it follow that $X$ is second-countable?''} using Qwen 3.7 Max. We learned about Tate--Hochschild cohomology from a similar search\footnote{``Suppose $A$ is a ring which is an $\omega_1$-compact (aka $\aleph_1$-compact) as an object of the derived $\infty$-category $\mathrm D(A\otimes A^{op})$. Then $A$ can be written as a sequential colimit $A\simeq\varinjlim_n P_n$ of some sequence of compact $A\otimes A^{\mathrm{op}}$-modules (aka ``perfect complexes'') $P_n$, and furthermore $\lbrace P_n\rbrace$ is essentially unique as an Ind-object in $\mathrm D(A\otimes A^{\mathrm{op}})$. What is the Ind-object $\lbrace \cofib ( P_n) \to A\rbrace$? Is it related to any invariants defined in the literature?''} using ChatGPT 5.6. Aside from these inputs, LLMs were only used to proofread and test some arguments.

I was supported by Dan Petersen's Wallenberg Scholar fellowship and the Danish National Research Foundation through the Copenhagen Centre for Geometry and Topology (DNRF151).

\section{Notation and terminology}
\begin{enumerate}[wide,labelwidth=!,labelindent=10pt,itemsep=10pt,label=\Roman*.]
\item We use the term `category' to refer to an $\infty$-category. If we wish to stress that a category is an old-fashioned category (i.e.~has homotopy discrete mapping spaces), we will call it a 1-category.
\item For convenience, we assume the existence of uncountable Grothendieck universes $\mathcal U\in\mathcal U'\in\mathcal U''$ whose elements we refer to as \tdef{small}, \tdef{large}, and \tdef{very large} respectively. Sets, topological spaces, spectra, modules, etc.~are small, whereas categories are (unless otherwise stated) large.
\item For an $E_\infty$-ring spectrum $R$ and an $R$-linear stable category $\mathcal C$, we let \[\mdef{\Hom{-}{-}[\mathcal C]}\colon\mathcal C^{\mathrm{op}}\times\mathcal C\to\modules[R]\] denote the $\modules[R]$-valued mapping object functor. For a topological space $X$, we let
  \[
    \mdef{\shHom{-}{-}[X]}\colon\Shv X[\modules[R]]^{\mathrm{op}}\otimes \Shv X[\modules[R]]\longrightarrow\Shv X[\modules[R]]
  \]
  denote the internal mapping object (aka Hom sheaf).
\item Let $X$ be a topological space and let $\mathcal C$ be a stable category. For a sheaf $F\in\Shv X[\mathcal C]$, we refer to the subset
  \[
    \mdef{\supp F} = \lbrace x\in X\mid F_x\not\simeq 0\rbrace
  \]
  as the \tdef{support} of $F$. (Many authors use the term ``support'' to refer to the closure of this set!)
\item For a locally compact Hausdorff space $X$ and an $E_\infty$-ring spectrum $R$, \[\mdef{\dualizingshf X}\in\Shv X[\modules[\basering]]\] denotes the Verdier dualizing sheaf. That is, $\dualizingshf X = p^!\basering$ where $p\colon X\to\pt$ is the projection to a point. Similarly, for $M\in\modules[\basering]$, we write \[\mdef{M_X}\in\Shv X[\modules[R]]\] for the constant sheaf with value $M$, i.e.~$M_X = p^*M$.
\item For a locally compact Hausdorff space $X$ and an open subset $U\subseteq X$, we let \[\mdef{h_U} = \basering\lbrack \yo (U)\rbrack\in\Shv X[\modules[R]]\] denote the $\basering$-linearization of the representable sheaf associated with $U$. Equivalently, $h_U = j_!\basering_U$, where $j\colon U\hookrightarrow X$ is the inclusion.
\item A \tdef{triangulation} of a topological space $X$ is a pair\[(\Sigma ,f\colon |\Sigma |\to X ),\] where $\Sigma$ is an abstract simplicial complex, $|{-}|$ denotes geometric realization, and $f$ is a homeomorphism. A topological space equipped with a distinguished triangulation (resp.~admitting a triangulation) is called \tdef{triangulated} (resp.~\tdef{triangularizable}). If $X$ is a triangulated space with triangulation $f$, we transfer combinatorial notions (simplices, subcomplexes, subdivisions, etc.) to $X$ in the obvious way.
\end{enumerate}

\section{Tate--Hochschild cohomology of large categories}
\begin{figure}[h]
  \centering
  \def\svgwidth{0.8\textwidth}
\begingroup%
  \makeatletter%
  \providecommand\color[2][]{%
    \errmessage{(Inkscape) Color is used for the text in Inkscape, but the package 'color.sty' is not loaded}%
    \renewcommand\color[2][]{}%
  }%
  \providecommand\transparent[1]{%
    \errmessage{(Inkscape) Transparency is used (non-zero) for the text in Inkscape, but the package 'transparent.sty' is not loaded}%
    \renewcommand\transparent[1]{}%
  }%
  \providecommand\rotatebox[2]{#2}%
  \newcommand*\fsize{\dimexpr\f@size pt\relax}%
  \newcommand*\lineheight[1]{\fontsize{\fsize}{#1\fsize}\selectfont}%
  \ifx\svgwidth\undefined%
    \setlength{\unitlength}{283.46456693bp}%
    \ifx\svgscale\undefined%
      \relax%
    \else%
      \setlength{\unitlength}{\unitlength * \real{\svgscale}}%
    \fi%
  \else%
    \setlength{\unitlength}{\svgwidth}%
  \fi%
  \global\let\svgwidth\undefined%
  \global\let\svgscale\undefined%
  \makeatother%
  \begin{picture}(1,0.34599998)%
    \lineheight{1}%
    \setlength\tabcolsep{0pt}%
    \put(0.75854157,0.08337186){\color[rgb]{0,0,0}\makebox(0,0)[lt]{\lineheight{1.25}\smash{\begin{tabular}[t]{l}$K_n$\end{tabular}}}}%
    \put(0.83527024,0.12835127){\color[rgb]{0,0,0}\makebox(0,0)[lt]{\lineheight{1.25}\smash{\begin{tabular}[t]{l}$X\setminus K_n$\end{tabular}}}}%
    \put(0,0){\includegraphics[width=\unitlength,page=1]{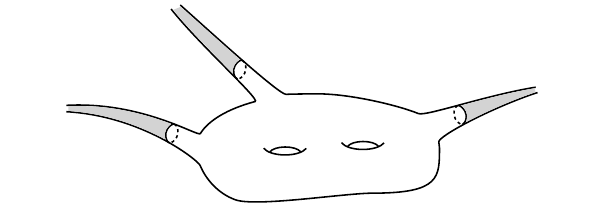}}%
  \end{picture}%
\endgroup%

  \caption{A surface with three ends, depicted as cusps, and with the three components of $X\setminus K_n$ (for $n\gg 0$) highlighted in gray}
  \label{fig:ends-genus-two-three-cusps}
\end{figure}
Let $X$ be an \tdef{$\sigma$-compact} Hausdorff space, which we recall means that it admits a \tdef{compact exhaustion}; that is, a filtration
\begin{equation}
  \label{eq:compact-exhaustion-classical}
  K_1\subseteq K_2\subseteq\cdots\subseteq X
\end{equation}
such that each $K_i$ is compact, $K_i$ is contained in the interior of $K_{i+1}$, and $X = \bigcup_1^\infty K_i$. Recall that an \tdef{end} of $X$ is a point in the pro-discrete space
\begin{equation}
  \label{eq:ends-pro-discrete}
  \limit_\NN\pi_0 (X\setminus K_i),
\end{equation}
where $\pi_0$ denotes components (not path components). Up to canonical homeomorphism, the space~\labelcref{eq:ends-pro-discrete} does not depend on the choice of the compact exhaustion~\labelcref{eq:compact-exhaustion-classical}. In particular, the cardinality
\[
  \#\lbrace\text{ends of }X\rbrace
\]
is a well-defined invariant of $X$.

The goal of this section is to categorify the theory of ends. Our main input is Clausen's theory of compactly-exhaustible objects in dualizable categories, which we recall in in~\cref{subsec:compactly-exhaustible} below. Using this theory, we construct an end-theoretic invariant for dualizable categories subject to a countability condition. Incidentally, this invariant generalizes the notion of Tate--Hochschild cohomology (aka singular Hochschild cohomology) for associative algebras in the sense of~\cite{Eu_Schedler2009,Bergh_Jorgensen2013,Wang2021}, and we can therefore view it as a kind of continuous extension of the latter to a class of large (i.e.~not compactly-generated) dualizable categories.
\subsection{Recollection on compactly exhaustible objects}\label{subsec:compactly-exhaustible}
Naively, one might hope to categorify the notion of a compact exhaustion such as~\labelcref{eq:compact-exhaustion-classical} by considering filtrations by compact objects. But derived categories of sheaves on topological spaces have very few compact objects; and for non-compact spaces (the only spaces for which there is anything to say about ends), there are no non-zero compact objects at all~\cites[][]{Neeman2001}[][Thm~2.6]{Harr2025}[][Prop~6.15]{Efimov2025}.\footnote{More precisely, any compact object in the derived category of a connected non-compact locally compact Hausdorff space is zero.} Instead, compact exhaustions in dualizable categories are defined in terms of a relative notion compactness:
\begin{defn}[{Clausen~\cites[][]{Clausen2023}[][\S~2.2]{Krause_Nikolaus_Putzstuck2024}}]\label{defn:compact-maps-and-exhaustions}
  Let $\mathcal D$ be a presentable stable category.
  \begin{enumerate}[label=(\roman*)]
  \item A morphism $x\to y$ in $\mathcal D$ is \tdef{compact} if for every filtered diagram $\lbrace z_j\rbrace_{j\in I}$ in $\mathcal D$ and every morphism $y\to\colimit_j z_j$, the composite
    \[
      x\longrightarrow y\longrightarrow\colimit_j z_j
    \]
    factors through the canonical map $z_i\to\colimit_j z_j$ for some $i\in I$.
  \item An object $x\in\mathcal C$ is \tdef{compactly exhaustible} if it can be written
    \begin{equation*}
      x\simeq\colimit\left( x_0\too x_1\too x_2\too\cdots\right),
    \end{equation*}
    where each transition map $x_i\to x_{i+1}$ is compact.
  \end{enumerate}
\end{defn}
\begin{rmk}
  In~\cite{Krause_Nikolaus_Putzstuck2024}, a morphism satisfying (i) above is called \emph{weakly compact}. There is a related concept of a morphism being \emph{strongly compact}. For dualizable stable categories, these notions coincide~\cite[Prop~2.4.1]{Krause_Nikolaus_Putzstuck2024}, so there is no reason for us to fuss over this distinction here.
\end{rmk}
The concept of a compactly exhaustible objects in a presentable stable category was used by Clausen to give the following \emph{internal} characterization of dualizable stable categories:\footnote{The usual definition, namely that of being a dualizable object $\mathcal D\in\Prlst$ with respect to the Lurie tensor product, is external in the sense that it makes no reference to objects or morphisms in $\mathcal D$.}
\begin{thm}[Clausen]\label{thm:internal-characterization-of-dualizability}
  Let $\mathcal D$ be a presentable stable category.
  \begin{enumerate}[label=(\roman*)]
  \item $\mathcal D$ is dualizable if and only if it is generated under colimits by compactly exhaustible objects.
  \item Suppose that $\mathcal D$ is a dualizable stable category. An object $x\in\mathcal D$ is compactly exhaustible if and only if it is $\aleph_1$-compact. If $x$ is compactly exhaustible, the ind-object underlying a sequential diagram $x_0\to x_1\to x_2\to\cdots$ as in~\cref{defn:compact-maps-and-exhaustions}(ii) is essentially unique.
  \end{enumerate}
\end{thm}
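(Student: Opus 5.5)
The plan is to work throughout with Lurie's characterization of dualizability. A presentable stable $\mathcal D$ is dualizable if and only if the colimit functor $\colimit\colon\mathrm{Ind}(\mathcal D)\to\mathcal D$ admits a left adjoint $\hat y$. Here $\mathrm{Ind}$ is taken over $\mathcal D^\kappa$ for $\kappa$ large, so that we stay in presentable land. When $\hat y$ exists it is automatically fully faithful, and $\mathcal D$ is then a retract in $\Prlst$ of a compactly generated category.

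The key technical lemma is the following. Suppose $x_0\to x_1\to\cdots$ has compact transition maps and $x=\colimit_n x_n$. Then for every ind-object $\text{``}\colimit_j\text{''} z_j$ there is a natural equivalence
\[
  \Hom{\text{``}\colimit_n\text{''} x_n}{\text{``}\colimit_j\text{''} z_j}[\mathrm{Ind}(\mathcal D)]\simeq \Hom{x}{\colimit_j z_j}[\mathcal D].
\]
To prove it, write the left side as $\limit_n\colimit_j\Hom{x_n}{z_j}$ and the right side as $\limit_n\Hom{x_n}{\colimit_j z_j}$. There is an obvious comparison map of towers. Compactness of $x_n\to x_{n+1}$ says precisely that each map $\Hom{x_{n+1}}{\colimit_j z_j}\to\Hom{x_n}{\colimit_j z_j}$ factors through $\colimit_j\Hom{x_n}{z_j}$. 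Hence the two towers are interleaved, so they are pro-equivalent, and their limits agree. One has to do this at the level of mapping spectra (shifting to get all homotopy groups), so Milnor $\lim^1$ terms match as well. I expect this interleaving argument, made honest $\infty$-categorically, to be the main obstacle; everything else is formal.

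For (i), direction ``$\Leftarrow$'': the lemma shows that on a compactly exhaustible $x$ the partially defined left adjoint to $\colimit$ exists and equals $\text{``}\colimit_n\text{''} x_n$. The full subcategory on which a left adjoint is defined is closed under colimits, so if such $x$ generate, $\hat y$ exists everywhere and $\mathcal D$ is dualizable. For ``$\Rightarrow$'', we use that a dualizable $\mathcal D$ is $\aleph_1$-presentable, being a retract of a compactly generated category via $\hat y$ and $\colimit$, both of which preserve $\aleph_1$-compact objects. So it suffices to prove the forward direction of (ii): every $\aleph_1$-compact object is compactly exhaustible.

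For (ii), first let $x$ be $\aleph_1$-compact. Then $\hat y(x)$ is an $\aleph_1$-compact ind-object, hence representable by a countable filtered diagram. Choosing a cofinal sequence gives $\hat y(x)\simeq\text{``}\colimit_n\text{''} x_n$. Each transition $x_n\to x_{n+1}$ is compact: it factors through $\hat y(x_{n+1})\to y(x_{n+1})$, and maps out of $\hat y(-)$ into filtered colimits factor through a stage by the adjunction. Applying $\colimit$ gives $x\simeq\colimit_n x_n$. Conversely, let $x=\colimit_n x_n$ with compact transitions. The lemma identifies $\Hom{x}{-}$ with $\limit_n\colimit_j\Hom{x_n}{-}$ on filtered colimits. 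Given an $\aleph_1$-filtered colimit, the compact maps $x_n\to x_{n+1}$ each land in a stage, and countably many such stages have a common upper bound. This shows $\Hom{x}{-}$ commutes with $\aleph_1$-filtered colimits, i.e.\ $x$ is $\aleph_1$-compact. Finally, uniqueness of the ind-object follows from the lemma, which forces $\text{``}\colimit_n\text{''} x_n\simeq\hat y(x)$ for any such presentation.
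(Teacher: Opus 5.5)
The paper gives no argument of its own here; it cites Theorem~2.2.11 and Corollary~2.3.2 of Krause--Nikolaus--P\"utzst\"uck. Your outline, a corepresentability lemma plus Lurie's criterion, is a reasonable independent route, but two steps fail as written.

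\textbf{First gap: $\aleph_1$-presentability.} The reduction of (i) ``$\Rightarrow$'' to (ii) rests on the claim that $\operatorname{colim}\colon\mathrm{Ind}(\mathcal D^\kappa)\to\mathcal D$ preserves $\aleph_1$-compact objects. This is false. Its right adjoint in general only preserves $\kappa$-filtered colimits. Concretely, if $c\in\mathcal D^\kappa$ is not $\aleph_1$-compact (e.g.\ a direct sum of $\aleph_1$ copies of the sphere spectrum, with $\kappa=\aleph_2$), then $y(c)$ is even compact in $\mathrm{Ind}(\mathcal D^\kappa)$, while $\operatorname{colim} y(c)=c$. So $\aleph_1$-presentability of a dualizable $\mathcal D$ is not established, and that is the substantive half of (i).

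The missing argument runs your (ii) reasoning on an arbitrary $x$:
\begin{itemize}
\item Write $\hat y(x)\simeq\text{``}\operatorname{colim}\text{''}_{i\in I}x_i$ over a possibly uncountable directed poset.
\item Since $y(x_i)$ is compact and $\hat y(x)\simeq\operatorname{colim}_j\hat y(x_j)$, for each $i$ there is $\sigma(i)\geq i$ such that $y(x_i)\to y(x_{\sigma(i)})$ factors through $\hat y(x_{\sigma(i)})$. Hence $x_i\to x_{\sigma(i)}$ is compact.
\item Then $x\simeq\operatorname{colim}_J\operatorname{colim}_{i\in J}x_i$, where $J$ runs over the $\aleph_1$-directed poset of countable, directed, $\sigma$-closed $J\subseteq I$. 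Each inner colimit is compactly exhaustible.
\end{itemize}
The same care is needed in (ii): the transitions of an arbitrary cofinal sequence for $\hat y(x)$ need not be compact, so you must pass to such a subsequence.

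\textbf{Second gap: the key lemma.} The lemma is true, but your justification misreads the definition. Compactness of $f\colon x_n\to x_{n+1}$ is an elementwise condition. Applied to shifts, it says only that the image of $\pi_*\mathrm{Hom}(x_{n+1},\operatorname{colim}_jz_j)$ in $\pi_*\mathrm{Hom}(x_n,\operatorname{colim}_jz_j)$ lies in the image of $\operatorname{colim}_j\pi_*\mathrm{Hom}(x_n,z_j)$.
\begin{itemize}
\item It gives no map of spectra $\mathrm{Hom}(x_{n+1},\operatorname{colim}_jz_j)\to\operatorname{colim}_j\mathrm{Hom}(x_n,z_j)$.
\item Even such a map would supply only one of the two triangles of an interleaving.
\item You cannot use the natural (``strong'') factorisation either: that requires dualizability, which is what (i) ``$\Leftarrow$'' is proving.
\end{itemize}
What is missing is pro-injectivity: if $a\colon x_{n+1}[k]\to z_i$ becomes null in $\operatorname{colim}_jz_j$, then $a\circ f[k]$ becomes null in some $z_j$. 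This uses stability:
\begin{itemize}
\item Lift $a$ to $\Sigma^{-1}\operatorname{cofib}(z_i\to\operatorname{colim}_jz_j)\simeq\operatorname{colim}_{j\geq i}\Sigma^{-1}\operatorname{cofib}(z_i\to z_j)$.
\item Use compactness of $f$ to factor the composite through a single $\Sigma^{-1}\operatorname{cofib}(z_i\to z_j)$.
\item The connecting map from this object to $z_i$ is killed by $z_i\to z_j$.
\end{itemize}
With both halves, the kernel and cokernel towers of $\pi_*$ have zero transition maps, so the towers are pro-isomorphic, and the Milnor sequences give the equivalence of limits.

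Once the lemma holds, the converse in (ii) follows directly because countable limits commute with $\aleph_1$-filtered colimits in spectra. Your stage-bounding argument again addresses only surjectivity. The rest of your argument is fine given the lemma: (i) ``$\Leftarrow$'' via corepresentability, and uniqueness via $\hat y(x)$.
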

\begin{proof}
  This is Theorem 2.2.11 and Corollary 2.3.2 in~\cite{Krause_Nikolaus_Putzstuck2024}.
\end{proof}
\begin{rmk}\label{rmk:compactness-in-cpt-gen-categories}
  If $\mathcal D$ is a compactly-generated stable category, then a morphism $f\colon x\to y$ in $\mathcal D$ is compact if and only if $f$ factors through a compact object.
  It follows that an object $x\in\mathcal D$ is compactly exhaustible if and only if it is a sequential colimit of compact objects, which also follows directly from~\cref{thm:internal-characterization-of-dualizability} in this situation.  
\end{rmk}

\begin{prop}
  Let $X$ be a Hausdorff space. If $U$ and $V$ be open subsets of $X$ such that $U$ is contained in a compact subset of $V$, then the counit map
  \[
    h_U\longrightarrow h_V
  \]
  is compact.
\end{prop}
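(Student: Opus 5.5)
Since $h_U = R[\yo(U)]$ corepresents sections over $U$, a map $h_V\to\colimit_j F_j$ is the same as a section $s\in\Gamma(V;\colimit_j F_j)$. Its composite with $h_U\to h_V$ is the restriction $s|_U\in\Gamma(U;\colimit_j F_j)$. The claim therefore reduces to the following. For every filtered diagram $\lbrace F_j\rbrace$ in $\Shv X[\modules[R]]$ and every section $s$ over $V$ of the colimit, the restriction $s|_U$ lifts (up to homotopy) to a section of some $F_i$ over $U$. Since we need a lift of the composite map $h_U\to F_i$, it is enough to factor the restriction map on sections through something compact. Let $K$ be a compact set with $U\subseteq K\subseteq V$. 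The restriction $\Gamma(V;G)\to\Gamma(U;G)$ factors, functorially in $G$, as
\[
  \Gamma(V;G)\longrightarrow\Gamma(K;G)\longrightarrow\Gamma(U;G),
\]
where $\Gamma(K;G) := \Gamma(X;i_K^*G)$ and $i_K\colon K\hookrightarrow X$ is the inclusion. Equivalently, the map $h_U\to h_V$ factors through the object corepresenting $\Gamma(K;-)$.

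The key input is that $\Gamma(K;-)$ commutes with filtered colimits when $K$ is compact Hausdorff. For sheaves of spectra or modules this is the standard fact that $\Gamma(K;i_K^*G)\simeq\colimit_{W\supseteq K}\Gamma(W;G)$ over open neighbourhoods $W$ of $K$. Together with the fact that global sections on a compact Hausdorff space commute with filtered colimits, this gives (cf.~Lurie, HTT~7.3.4 and the proper base change setup):
\[
  \Gamma(K;\colimit_j F_j)\simeq\colimit_j\Gamma(K;F_j).
\]
Here I would need $X$ locally compact, or at least that $K$ has a good system of neighbourhoods. The statement only says Hausdorff, so I would check that the argument only uses the compactness of $K$ and the Hausdorff property: the sheaf $i_K^*G$ on $K$ depends only on germs of $G$ near $K$. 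Commutation of $\Gamma(K;-)$ with filtered colimits of sheaves on the compact Hausdorff space $K$ holds without local compactness of $X$. The remaining point is that the pullback $i_K^*$ preserves colimits, which it does as a left adjoint.

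Given this, the argument concludes as follows. The element $s|_K\in\Gamma(K;\colimit_j F_j)\simeq\colimit_j\Gamma(K;F_j)$ comes from some $t\in\Gamma(K;F_i)$, as a point of a filtered colimit of mapping spaces. Restricting $t$ along $U\subseteq K$ gives a section of $F_i$ over $U$, i.e.~a map $h_U\to F_i$. By naturality of the factorization above, its composite with $F_i\to\colimit_j F_j$ is $s|_U$. Working at the level of mapping spaces, so that filtered colimits of spaces commute with $\pi_0$, the factorization holds up to homotopy, which is what the definition of a compact morphism requires.

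The main obstacle is the commutation of $\Gamma(K;-)$ with filtered colimits. In the non-hypercomplete $\infty$-categorical setting this needs care. I would deduce it from the fact that $\Shv K$ for compact Hausdorff $K$ has $\Gamma(K;-)$ preserving filtered colimits (HTT, via $K$ being a coherent-like compact space, or via Lurie's proof of proper base change). This can be cited rather than reproved.
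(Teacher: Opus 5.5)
Your argument is correct and is essentially the one in the proof of \cite[Prop~2.2.20]{Krause_Nikolaus_Putzstuck2024} that the paper cites: factor the restriction $\Gamma(V;-)\to\Gamma(U;-)$ naturally through $G\mapsto\Gamma(K;i_K^*G)$, and use that this functor commutes with filtered colimits, which, as you suspected, only needs $K$ compact Hausdorff and $i_K^*$ a left adjoint. Delete the ``Equivalently'' sentence: this functor is in general not corepresentable by an object (it fails to preserve infinite products, e.g.\ of skyscraper sheaves at points outside $K$ accumulating on $K$), but you never use that claim, because the functor-level factorization is all the argument needs.
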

\begin{proof}
  This is shown in the proof of the (3) $\Longrightarrow$ (1) part of~\cite[Prop~2.2.20]{Krause_Nikolaus_Putzstuck2024}.
\end{proof}

\subsection{Ends of second-countable categories}\label{subsec:general-definitions}
Let $R$ be an $E_\infty$-ring spectrum. We let
\[
  \mdef{\Prlst[R]} = \operatorname{Mod}_{\mathrm{Mod}_R}(\Prlst )
\]
denote the (very large) symmetric monoidal category of \tdef{$R$-linear stable categories} with the $R$-linear Lurie tensor product $\otimes_R$. As a special case of a theorem of Arinkin, Gaitsgory, Kazhdan, Raskin, Rozenblyum, and Varshavsky, an $R$-linear stable category $\mathcal D$ is dualizable as an object of $\Prlst[R]$ if and only if the underlying stable category of $\mathcal D$ is dualizable~\cite[Thm~4.3.1]{Krause_Nikolaus_Putzstuck2024}. We refer to~\cite{Ramzi2024} for a comprehensive treatment of the theory of dualizable categories linear over a base such as $\modules[R]$.

\begin{rmk}[dg categories]
  In what follows, we will frequently restrict our attention to the situation in which $R$ is the Eilenberg--MacLane spectrum $Hk$ of an ordinary commutative $k$. In that case $\modules[Hk]$ is the unbounded derived $\infty$-category $\mathrm D(k)$, and $\Prlst[Hk]$ is folklorically equivalent to a category of $k$-linear dg categories.

  So as best to make a precise statement that is not too far from the existing literature, note that
  \[
    \Prlst[Hk]\simeq\colimit_\kappa\mathcal P\mathrm r_{Hk,\kappa ,\mathrm{st}}^L,
  \]
  where $\kappa$ ranges over small regular cardinals and $\mathcal P\mathrm r_{Hk,\kappa ,\mathrm{st}}^L\subset\Prlst[Hk]$ denotes the category of $\kappa$-compactly generated stable $Hk$-linear categories and $\kappa$-strongly continuous $Hk$-linear functors. We can therefore always assume that our categories and functors are in $\mathcal P\mathrm r_{Hk,\kappa ,\mathrm{st}}^L$ for a sufficiently large $\kappa$.

  Cohn has shown that there is an equivalence
  \begin{equation}
    \label{eq:cohn-equivalence}
    \mathrm{Cat}^k_{\mathrm{dg}}\lbrack W^{-1}\rbrack\xrightarrow{\quad\sim\quad}\mathcal P\mathrm r_{Hk,\aleph_0 ,\mathrm{st}}^L,
  \end{equation}
  where $\mathcal P\mathrm r_{Hk,\aleph_0 ,\mathrm{st}}^L$ is the category of $Hk$-linear compactly generated stable categories and strongly continuous functors, $\mathrm{Cat}^k_{\mathrm{dg}}$ is the 1-category of pretriangulated idempotent-complete $k$-linear dg categories, $W$ is the class of DK-equivalences, and $\mathrm{Cat}^k_{\mathrm{dg}}\lbrack W^{-1}\rbrack$ is the obvious Dwyer--Kan localization (which is no longer a 1-category)~\cite[Cor~5.5]{Cohn2016}.

  The arguments of~\cite{Cohn2016} can be extended to show an $\kappa$-compactly generated analog of~\labelcref{eq:cohn-equivalence} for any uncountable regular cardinal $\kappa >\aleph_0$, which takes the form
  \[
    \mathrm{Cat}^k_{\mathrm{dg},\kappa}\lbrack W^{-1}\rbrack\xrightarrow{\quad\sim\quad}\mathcal P\mathrm r_{Hk,\kappa ,\mathrm{st}}^L,
  \]
  where $\mathrm{Cat}^k_{\mathrm{dg},\kappa}$ is the 1-category of pretriangulated $k$-linear dg categories admitting all $\kappa$-small homotopy coproducts and $W$ is again the class of DK-equivalences. In particular, the homotopy theory of $\mathcal P\mathrm r_{Hk,\kappa ,\mathrm{st}}^L$ is the homotopy theory of $\mathrm{Cat}^k_{\mathrm{dg},\kappa}$ with the DK-model structure (as in~\cite[Thm~2.8]{Cohn2016}); they have the same homotopy categories, and (co)limits in $\mathcal P\mathrm r_{Hk,\kappa ,\mathrm{st}}^L$ are computed as homotopy (co)limits in $\mathrm{Cat}^k_{\mathrm{dg},\kappa}$.
\end{rmk}
As in the theory of ends in topology, our treatment of ends will require a countability assumption. As we shall see, however, our countability assumption corresponds in topology to a stricter assumption than $\sigma$-compactness.\footnote{Incidentally, it is exactly the assumption that Freudenthal imposed in his original treatment of ends~\cite{Freudenthal1931}.}
\begin{defn}\label{defn:second-countable}
  A dualizable $\basering$-linear stable category $\mathcal D$ is \tdef{second-countable} if the coevaluation
  \[
    \coev{\mathcal D}\colon\modules\longrightarrow\mathcal D\otimes_\basering\mathcal D^\vee
  \]
  is $\aleph_1$-strongly continuous. 
\end{defn}
The terminology will be justified in~\cref{thm:second-countability-recovers-notion-from-topology} below.
\begin{rmk}
  Since $\modules$ is generated by a single compact object $\basering$, the condition is equivalent to requiring that the object $\coev{\mathcal D}(\basering )\in\mathcal D\otimes_\basering\mathcal D^\vee$ is $\aleph_1$-compact.
\end{rmk}
\begin{rmk}
  By~\cite[Thm~C.6]{Efimov2025}, a dualizable stable category $\mathcal D$ is $\aleph_1$-compact as an object of $\Prdual$ if and only if both its evaluation $\ev{\mathcal D}$ and coevaluation $\coev{\mathcal D}$ are $\aleph_1$-strongly continuous. We only need the weaker condition of second-countability.
\end{rmk}
Suppose that $\mathcal D$ is second-countable. Since $\mathcal D\otimes_\basering\mathcal D^\vee$ is itself a dualizable $\basering$-linear stable $\infty$-category, it follows from~\cref{thm:internal-characterization-of-dualizability} that we can write
\[
  \coev{\mathcal D}(\basering )\simeq\colimit\left(
    \Phi_1\too\Phi_2\too\Phi_3\to\cdots
  \right),
\]
where $\Phi_n\to\Phi_{n+1}$ is a compact morphism in $\mathcal D\otimes_\basering\mathcal D^\vee$ for each $n$.
\begin{defn}\label{defn:homology-at-infty}
  Let $\mathcal D$ and $\coev{\mathcal D}$ be as above, and write $\coev{\mathcal D}^R$ for the right adjoint of $\coev{\mathcal D}$. 
  The \tdef{Tate--Hochschild cohomology} of $\mathcal D$, denoted $\mdef{\contsgHH{\mathcal D}}$, is defined to be the cofiber of the map
  \begin{equation}
    \label{eq:Hochschild-homology-of-large-cat}
    \colimit_{n\in\NN}\coev{\mathcal D}^R(\Phi_n)\longrightarrow \coev{\mathcal D}^R(\coev{\mathcal D}(\basering ))
  \end{equation}
  in $\modules$. 
\end{defn}
Note that $\contsgHH{\mathcal D}$ is well-defined up to a contractible space of choices, since the underlying Ind-object $\lbrace\Phi_n\rbrace_n\in\Ind{\mathcal D\otimes_\basering\mathcal D^\vee}$ is essentially unique according to~\cref{thm:internal-characterization-of-dualizability}.
\begin{rmk}
  Following Kontsevich, a dualizable stable $\basering$-linear category $\mathcal D$ is said to be \tdef{smooth} if $\coev{\mathcal D}$ is strongly continuous, or equivalently if $\coev{\mathcal D}(\basering )$ is compact. If $\mathcal D$ is smooth, then $\mathcal D$ is (a fortiori) second-countable and we find that
  \[
    \contsgHH{\mathcal D}\simeq 0,
  \]
  as one can see by noting that $\lbrace \Phi_n\rbrace_1^\infty$ can be taken to be the constant sequence
  \[
    \coev{\mathcal D}(\basering)\xtoo{\id}\coev{\mathcal D}(\basering)\xtoo{\id}\cdots.
  \]
  Tate--Hochschild cohomology can therefore be seen as measuring non-smoothness of dualizable categories.

  In topology, we have the terminologically awkward situation that smoothness (in Kontsevich's sense) is most closely related to the properness (i.e.~compactness) of topological spaces, in the sense that the failure of smoothness measured by the invariant $\contsgHH{\Shv X[\modules[\basering]]}$ has to do with the ends of $X$, see~\cref{thm:homology-at-infty} below. In particular, if $X$ is a compact and second-countable Hausdorff space, we will see that
  \[
    \contsgHH{\Shv X[\modules[\basering]]}\simeq 0.
  \]
  (This is not to say that $\Shv X[\modules[\basering]]$ is smooth on the nose if $X$ is compact; in fact $\Shv X[\modules[\basering]]$ is smooth if and only if $X$ is a finite discrete space, see~\cite{Harr2023}. Rather, it is the failure-of-smoothness as measured by invariants such as $\contsgHH{-}$ is related to compactness of $X$, since $X$ is compact if and only if the ind-system $\lbrace\Phi_n\rbrace$ is ind-constant.)
\end{rmk}

\subsection{Second-countability in topology}\label{subsec:second-countability}
We wish to show that the second-countability condition above precisely recovers the condition of the same name from point-set topology. The fact that the derived category of a second-countable locally compact Hausdorff space is second-countable in the sense of \cref{defn:second-countable} is straightforward using basic results about compactly exhaustible objects such as those in~\cite{Krause_Nikolaus_Putzstuck2024}, although it will be useful to give a detailed  proof. The other direction is more subtle, and will rely on the following celebrated theorem:
\begin{nonothm}[\v{S}ne\u{i}der~\cite{Sneider1945}]
  If $X$ is a compact Hausdorff space such that the diagonal $\Delta (X)\subseteq X\times X$ is $G_\delta$, then $X$ is metrizable.
\end{nonothm}
(Recall that a subset of a topological space is said to be \tdef{$G_\delta$} if it is a countable intersection of open subsets.) We refer to~\cite[Thm~2.13]{Gruenhage1984} for a comprehensive account of \v{S}ne\u{i}der's theorem and related results. Unfortunately, it seems that \v{S}ne\u{i}der's original article has not been digitized.

It follows immediately from \v{S}ne\u{i}der's theorem that if $X$ is $\sigma$-compact and locally compact Hausdorff such that $\Delta (X)\subseteq X\times X$ is $G_\delta$, then $X$ is metrizable, since a compact exhaustion $K_1\subseteq K_2\subseteq\cdots\subseteq X$ will have $K_n$ metrizable for each $n$. Alternatively, this can be deduced from Ceder's extension of \v{S}ne\u{i}der's theorem~\cite[Lem~5.4]{Ceder1961}, since a $\sigma$-compact locally compact Hausdorff space is paracompact and \v{C}ech complete~\cite{pi-base}.
We will also need the following lemma:
\begin{lem}\label{lem:aleph1-compact-implies-compactly-exhaustible}
  Let $X$ be a locally compact Hausdorff space. If $U\subseteq X$ is an open subset such that $h_U\in\Shv X[\modules[\basering]]$ is $\aleph_1$-compact, then $U$ is compactly exhaustible.
\end{lem}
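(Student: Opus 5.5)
The plan is to use $\aleph_1$-compactness of $h_U$ directly against an $\aleph_1$-filtered presentation of $h_U$ by subsets of $U$ with countably many pieces. Then we read off from stalks that $U$ itself is such a subset. Here ``$U$ is compactly exhaustible'' is taken to mean that $U$ admits an exhaustion $U_1\subseteq U_2\subseteq\cdots$ by open subsets with $\overline{U_n}$ compact and contained in $U_{n+1}$. This is equivalent to $U$ being $\sigma$-compact, since $U$ is locally compact Hausdorff. We may assume $\basering\neq 0$, since otherwise every sheaf is zero and the hypothesis carries no information; I would flag this degenerate case as needing a separate convention.

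\emph{Step 1: the presentation.} Let $P$ be the set of open $V\subseteq U$ whose closure is compact and contained in $U$. By local compactness of $X$, $P$ covers $U$ and is closed under finite unions, so it is a filtered poset. For each countable subset $S\subseteq P$, put $W_S=\bigcup_{V\in S}V$. The countable subsets of $P$, ordered by inclusion, form an $\aleph_1$-filtered poset, and $U=\bigcup_S W_S$ is an $\aleph_1$-directed union.

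The point to check is that $h_U\simeq\colimit_S h_{W_S}$ in $\Shv X[\modules[\basering]]$. I expect this to be the only real obstacle, though a mild one. For a directed union of opens $U=\bigcup_i U_i$, the map $\colimit_i h_{U_i}\to h_U$ is an equivalence. Indeed $h_{(-)}=j_!\basering$ and $j_!$ commutes with colimits, so it suffices to check on stalks. At a point $x$, the stalk of the colimit is $\colimit_{\{i: x\in U_i\}}\basering$ over a directed, nonempty, hence contractible, indexing set. This gives $\basering$ for $x\in U$ and $0$ otherwise, exactly the stalk of $h_U$. Stalks detect equivalences in sheaves on the locally compact Hausdorff space $X$, since such sheaves are hypercomplete in the relevant sense; alternatively, one can compare sections on opens using compactness.

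\emph{Step 2: apply $\aleph_1$-compactness.} Since $h_U$ is $\aleph_1$-compact and the colimit of Step 1 is $\aleph_1$-filtered, the identity of $h_U$ factors through $h_{W_S}\to h_U$ for some countable $S$. Now suppose $x\in U\setminus W_S$. Taking stalks at $x$ exhibits $\id_{\basering}$ as a composite $\basering\to 0\to\basering$, forcing $\basering=0$, which contradicts our assumption. Hence $U=W_S=\bigcup_{n}V_n$ for a sequence $V_1,V_2,\ldots$ in $P$.

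\emph{Step 3: build the exhaustion.} Set $U_1=V_1$. Given $U_n$, which is a finite union of members of $P$, its closure $\overline{U_n}$ is compact and contained in $U$. So $\overline{U_n}$ is covered by finitely many $V_i$. Let $U_{n+1}$ be the union of these $V_i$ together with $V_1,\dots,V_{n+1}$. Then $U_{n+1}\in P$, $\overline{U_n}\subseteq U_{n+1}$, and $\bigcup_n U_n\supseteq\bigcup_n V_n=U$. Thus $U$ is compactly exhaustible. As a consistency check, by the Proposition above the maps $h_{U_n}\to h_{U_{n+1}}$ are compact, so $h_U$ is compactly exhaustible in the sense of \cref{defn:compact-maps-and-exhaustions}, in line with \cref{thm:internal-characterization-of-dualizability}(ii).
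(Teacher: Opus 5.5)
Your proof is correct and is essentially the paper's argument. The paper likewise writes $h_U$ as an $\aleph_1$-filtered colimit of $h_V$ over compactly exhaustible opens $V\subseteq U$, extracts a retract from $\aleph_1$-compactness, and reads off $V=U$ from stalks (tacitly using $\basering\neq 0$, as you note); your indexing by countable subsets of $P$ and your Step~3 just make explicit the paper's remark that countable unions of compactly exhaustible opens are compactly exhaustible.

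One justification in Step~1 needs repair: hypercompleteness is not automatic for locally compact Hausdorff spaces (it is only guaranteed under finite-dimensionality hypotheses), so you should not rely on stalks detecting equivalences. The claim instead follows from the sheaf condition: for any $G$ one has $\mathrm{Map}(\colimit_i h_{U_i},G)\simeq\lim_i G(U_i)\simeq G(U)$, because the directed family $\lbrace U_i\rbrace$ is cofinal in the covering sieve it generates on $U$.
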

\begin{proof}
  Since $X$ is locally compact Hausdorff, we have $h_U\simeq\colimit_{V\subseteq U}h_V$, where $V$ ranges over the poset of compactly exhaustible open subsets of $U$ ordered by inclusion. Since a countable union of compactly exhaustible open sets is again compactly exhaustible open, we find that this poset is $\aleph_1$-filtered. It follows that $h_U$ is a retract of $h_V$ for some compactly exhaustible $V\subseteq U$. Inspecting stalks, we find that we must then have $V = U$.
\end{proof}
\begin{thm}\label{thm:second-countability-recovers-notion-from-topology}
  A locally compact Hausdorff space $X$ is second-countable if and only if $\Shv X[\modules[\basering]]$ is second-countable.
\end{thm}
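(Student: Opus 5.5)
The plan is to identify $\Shv X[\modules[\basering]]\otimes_\basering\Shv X[\modules[\basering]]^\vee$ with $\Shv{X\times X}[\modules[\basering]]$, using self-duality of sheaf categories on locally compact Hausdorff spaces via Verdier duality. Under this identification, $\coev{}(\basering)$ corresponds to $\Delta_!\basering_X = \Delta_*\basering_X$, the pushforward along the closed diagonal. Second-countability of the category thus amounts to $\aleph_1$-compactness of $\Delta_*\basering_X$ in $\Shv{X\times X}[\modules[\basering]]$. By \cref{thm:internal-characterization-of-dualizability}, this is the same as $\Delta_*\basering_X$ being compactly exhaustible. I will prove both directions from this reformulation.

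For ($\Rightarrow$), suppose $X$ is second-countable. Then $X$ is $\sigma$-compact and metrizable, so $X\times X$ is as well. I will use the cofiber sequence $h_W\to\basering_{X\times X}\to\Delta_*\basering_X$ with $W=(X\times X)\setminus\Delta$. Since $\aleph_1$-compact objects are closed under cofibers, it suffices to show that $h_W$ and $\basering_{X\times X}=h_{X\times X}$ are $\aleph_1$-compact.

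Any open $U$ in a second-countable locally compact Hausdorff space is $\sigma$-compact. So $U=\bigcup U_n$ with each $\overline{U_n}$ compact and contained in $U_{n+1}$. By the preceding proposition, the maps $h_{U_n}\to h_{U_{n+1}}$ are compact. Since $h_U\simeq\colimit h_{U_n}$, the object $h_U$ is compactly exhaustible, hence $\aleph_1$-compact. Applying this to $U=W$ and $U=X\times X$ gives the forward direction.

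For ($\Leftarrow$), suppose $\Delta_*\basering_X$ is $\aleph_1$-compact. The first task is to show that $h_W$ and $h_{X\times X}$ are $\aleph_1$-compact; this is the main obstacle. I would first try to show that $X$ is $\sigma$-compact. One route is to argue locally: restriction to an open $V\times V$ with $V$ relatively compact preserves $\aleph_1$-compactness, since $j^*$ has a colimit-preserving right adjoint $j_*$ in the locally compact setting. A better route is to apply the proper pushforward along the first projection to $X$, which sends $\Delta_*\basering_X$ to $\basering_X$. This projection is not proper, so I would instead use a functor like $\Hom{h_{V\times V}}{-}$, or rather a strongly continuous functor such as $(\mathrm{pr}_1)_!$. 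Since $(\mathrm{pr}_1)_!$ preserves $\aleph_1$-compactness, $(\mathrm{pr}_1)_!\Delta_*\basering_X\simeq\basering_X=h_X$ is $\aleph_1$-compact, and \cref{lem:aleph1-compact-implies-compactly-exhaustible} then makes $X$ compactly exhaustible, hence $\sigma$-compact. Consequently $X\times X$ is $\sigma$-compact and $h_{X\times X}$ is $\aleph_1$-compact as in the forward direction. The fiber sequence then shows that $h_W$ is $\aleph_1$-compact, and applying \cref{lem:aleph1-compact-implies-compactly-exhaustible} on $X\times X$ shows that $W$ is compactly exhaustible, hence $\sigma$-compact.

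The remaining step is topological. If $W=\bigcup C_n$ with each $C_n$ compact, then $\Delta=\bigcap (X\times X\setminus C_n)$ is $G_\delta$. By the consequence of Šneĭder's theorem recorded above, $X$ is metrizable. A $\sigma$-compact metrizable space is second-countable, which completes the reverse direction.

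I expect the hard parts to be two claims: the identification of $\coev{}(\basering)$ with $\Delta_*\basering_X$, and that $(\mathrm{pr}_1)_!$ is strongly continuous, i.e.\ has colimit-preserving right adjoint $\mathrm{pr}_1^!$. The second holds for locally compact Hausdorff spaces, since $\mathrm{pr}_1^!\simeq\mathrm{pr}_1^*\otimes\dualizingshf X$ up to a base change argument. This strong continuity is what guarantees that $(\mathrm{pr}_1)_!$ preserves $\aleph_1$-compact objects.
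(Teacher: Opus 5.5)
Your forward direction is correct and a bit more economical than the paper's. The paper builds an explicit compact exhaustion $C_k=\cofib(h_{U_k}\to h_{W_k})$ of $\Delta_*\basering_X$; you instead use that $\aleph_1$-compact objects are closed under cofibers and that $h_U$ is compactly exhaustible for $\sigma$-compact open $U$. In the reverse direction, everything after $\sigma$-compactness of $X$ matches the paper: the fiber sequence, \cref{lem:aleph1-compact-implies-compactly-exhaustible}, the $G_\delta$ diagonal and \v{S}ne\u{i}der's theorem.

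\textbf{The gap.} It is the step establishing $\sigma$-compactness of $X$. The functor $(\mathrm{pr}_1)_!$ is \emph{not} strongly continuous for a general locally compact Hausdorff $X$. The formula $\mathrm{pr}_1^!\simeq\mathrm{pr}_1^*(-)\otimes\mathrm{pr}_2^*\dualizingshf X$ is a cohomological-smoothness statement about $X\to\mathrm{pt}$, which is unavailable at a stage where nothing is known about $X$.

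In fact $(\mathrm{pr}_1)_!$ need not even preserve $\aleph_1$-compact objects. Take $X=\beta\NN$ and $\basering=H\ZZ$.
\begin{enumerate}
\item Since $X\times X$ is compact, $h_{X\times X}$ is a compact object.
\item By base change, $(\mathrm{pr}_1)_!h_{X\times X}\simeq p^*\Gamma(X;\ZZ)$. This is the constant sheaf whose value is the uncountable group of bounded integer sequences.
\item Stalk functors preserve $\aleph_1$-compact objects, because their right adjoints $x_*\simeq x_!$ preserve colimits. But $\aleph_1$-compact objects of $\mathrm D(\ZZ)$ have countable homology, so the stalks here are not $\aleph_1$-compact. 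Hence $(\mathrm{pr}_1)_!h_{X\times X}$ is not $\aleph_1$-compact.
\end{enumerate}
Already for the Cantor set, $(\mathrm{pr}_1)_!$ sends the compact object $h_{X\times X}$ to the non-compact object $\bigoplus_{\NN}\basering_X$. So you cannot conclude this way that $(\mathrm{pr}_1)_!\Delta_*\basering_X\simeq\basering_X$ is $\aleph_1$-compact. Your discarded aside, that $j_*$ preserves colimits for an open embedding $j$, is also false.

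\textbf{A repair.} Use $\Delta^*$ in place of $(\mathrm{pr}_1)_!$. Since $\Delta$ is a closed embedding, $\Delta_*\simeq\Delta_!$ preserves colimits, so $\Delta^*$ preserves $\aleph_1$-compact objects. Moreover $\Delta^*\Delta_*\basering_X\simeq\basering_X=h_X$. Then \cref{lem:aleph1-compact-implies-compactly-exhaustible} makes $X$ compactly exhaustible, and the rest of your argument goes through unchanged.

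\textbf{The paper's route.} It avoids any pushforward at this step.
\begin{enumerate}
\item It writes $\coev{}(\basering)$ as a colimit of objects $h_{U_i}[m_i]\boxtimes h_{V_i}[n_i]$ with $U_i,V_i$ precompact.
\item By $\aleph_1$-compactness, $\coev{}(\basering)$ is a retract of a countable subcolimit.
\item Hence countably many $h_{U_j}$ generate the category.
\item A support argument then gives $X=\bigcup_j\overline{U_j}$.
\end{enumerate}
The $\Delta^*$ fix is shorter. The paper's argument additionally exhibits explicit countable generation of the sheaf category.
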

\begin{proof}
  We first show the ``if'' part, i.e.~if $X$ is second-countable in the non-commutative sense, then it is second-countable in the ordinary sense. The first step will be to show that $X$ is $\sigma$-compact.
As in~\cite{Ramzi2022}, we write
  \[
    \coev{}(\basering )\simeq\colimit_{i\in I} (f(i)\boxtimes g(i))\in\Shv X[\modules[\basering]]\otimes\Shv X[\modules[\basering]]
  \]
  where $f,g\colon I\to\Shv X[\modules[\basering]]$ are functors such that $f(i) = h_{U_i}\lbrack m_i\rbrack$ and $g(i) = h_{V_i}\lbrack n_i\rbrack$ for each $i\in I$, where $U_i,V_i\subseteq X$ are precompact open subsets and $m_i,n_i\in\ZZ$ are integers. We can decompose the colimit as
  \[
    \colimit_{i\in I} (f(i)\boxtimes g(i))\simeq\colimit_{J\subseteq I}\colimit_{j\in J} (f(j)\boxtimes g(j)),
  \]
  where the outer colimit on the right-hand side is taken over countable subsets $J\subseteq I$. The poset of countable subsets of $I$ is $\aleph_1$-filtered, and so by $\aleph_1$-compactness of $\coev{}(\basering )$ we find that $\coev{}(\basering )$ is a retract of $\colimit_{j\in J} (f(j)\boxtimes g(j))$ for some countable subset $J\subseteq I$. It follows in particular that $\Shv X[\modules[\basering]]$ is generated under colimits by $f(j)\simeq h_{U_j}\lbrack m_j\rbrack$ for $j\in J$. A support consideration now shows that $X = \bigcup_{j\in J}U_j = \bigcup_{j\in J}\overline{U_j}$, whence $X$ is $\sigma$-compact as claimed.

  Since $X$ is $\sigma$-compact it follows that also $X\times X$ is $\sigma$-compact. Consider the cofiber sequence
  \[
    h_{X\times X\setminus\Delta (X)}\too h_{X\times X}\too\Delta_*\basering_X
  \]
  in $\Shv{X\times X}$ By assumption $\Delta_*\basering_X$ is $\aleph_1$-compact. We have seen that $X\times X$ is $\sigma$-compact, whence $h_{X\times X}$ is $\aleph_1$-compact as well. By~\cite[Lem~2.3.14]{Krause_Nikolaus_Putzstuck2024}, we thus find that $h_{X\times X\setminus\Delta (X)}$ is $\aleph_1$-compact. \Cref{lem:aleph1-compact-implies-compactly-exhaustible} now implies that $X\times X\setminus\Delta (X)$ is compactly exhaustible. Pick a compact exhaustion
  \[
    K_1\subseteq K_2\subseteq\cdots \subseteq (X\times X)\setminus\Delta (X).
  \]
  We then have $\Delta (X) = \bigcap_{n=1}^\infty (X\times X)\setminus K_n$, whence $\Delta (X)$ is $G_\delta$. It follows from \v{S}ne\u{i}der's theorem that $X$ is metrizable, see the discussion preceding~\cref{lem:aleph1-compact-implies-compactly-exhaustible}. It follows that $X$ is second-countable as claimed.

  We now prove the other implication. Since $X$ is second-countable, so is $X\times X$ and the open subset $(X\times X)\setminus\Delta (X)$. But a second-countable locally compact Hausdorff space is $\sigma$-compact~\cite{pi-base}, so we may pick an exhaustion by opens
\[
  U_1\subseteq U_2\subseteq\cdots\subseteq (X\times X)\setminus\Delta (X)
\]
where $U_i$ is contained in a compact subset of $U_{i+1}$ for each $i\geq 0$. Let
\[
  V_1\subseteq V_2\subseteq\cdots\subseteq X
\]
be the same kind of exhaustion of $X$, and write
\[
  W_i = (V_i\times V_i)\cup U_{i+1}
\]
for $i\geq 0$.
\begin{figure}[h]
  \centering
  \def\svgwidth{0.8\textwidth}
  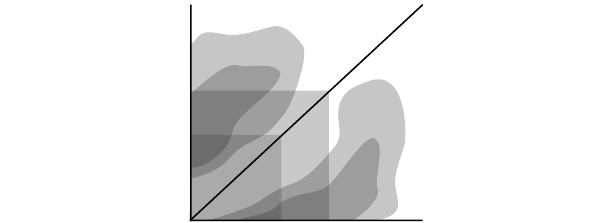
  \caption{The sequences of opens $\lbrace U_i\rbrace_i$ and $\lbrace V_i\rbrace_i$ depicted inside $X\times X$}
  \label{fig:ends-genus-two-three-cusps}
\end{figure}

The inclusions $U_i\subseteq W_i$ give rise to an $\NN\times\Delta^1$-shaped commutative diagram
  \[
    \begin{tikzcd}
      h_{U_1}\arrow[r]\arrow[d]
      & h_{U_2}\arrow[r]\arrow[d]
      & h_{U_3}\arrow[r]\arrow[d]
      & \cdots\\
      h_{W_1}\arrow[r]
      &h_{W_2}\arrow[r]
      &h_{W_3}\arrow[r]
      &\cdots
    \end{tikzcd}
  \]
  in which all arrows are compact. The colimits of the horizontal columns in this diagram are computed by (i) to be
  \begin{flalign*}
    \quad &h_{X\times X\setminus\Delta (X)}\simeq\colimit_n\left(h_{U_1}\too h_{U_2}\too h_{U_3}\too\cdots\right),\text{ and}&\\
    \quad &h_{X\times X}\simeq\colimit_n\left(h_{W_1}\too h_{W_2}\too h_{W_3}\too\cdots\right).&
  \end{flalign*}
  Letting $C_k = \cofib (h_{U_k}\to h_{W_k})$ for each $k\geq 1$, we thus get a sequential diagram
  \[
    C_1\too C_2\too C_3\too\cdots
  \]
  with
  \[
    \colimit_n(C_1\too C_2\too C_3\too\cdots)\simeq\cofib (h_{X\times X\setminus\Delta (X)}\too h_{X\times X})\simeq \Delta_*\basering_X,
  \]
  where the last equivalence comes from the usual recollement cofiber sequence. Furthermore, it follows from~\cite[Lem~2.3.7 \& 2.3.8]{Krause_Nikolaus_Putzstuck2024} that $C_k\to C_{k+2}$ is compact for each $k$. Since $\colimit_n(C_2\to C_4\to C_6\to\cdots)\simeq \Delta_*\basering_X$ by cofinality, we see in particular that $\Delta_*\basering_X$ is compactly-exhaustible and hence $\aleph_1$-compact as claimed.
\end{proof}

\subsection{Tate--Hochschild cohomology of topological spaces}\label{subsec:topology}
Our main objective in this subsection is to prove~\cref{mainthm:tate-hochschild-cohomology}. For the reader's convenience, we start by recalling the definition of singular homology at infinity, following~\cite[Ch~3]{Hughes_Ranicki1996}.

Let $X$ be a topological space, let $p$ be a nonnegative integer, and let $\operatorname{S}_p(X)$ denote the set of singular $p$-simplices in $X$. A \tdef{locally finite singular p-chain} is a formal $\ZZ$-linear sum \[f\, = \sum_{\sigma\in\operatorname{S}_p(X)} n_\sigma\sigma\] such that each $x\in X$ admits an open neighborhood $U$ for which the $U$-restricted support \[\supp_U(f) = \lbrace\sigma\mid n_\sigma\neq 0\text{ and }\sigma (|\Delta^p|)\cap U\neq\varnothing\rbrace\] is finite. The locally finite singular $p$-chains form an abelian group $\mathrm C_p^{\mathrm{lf}}(X;\ZZ )$, and by varying $p$ these assemble into a chain complex $\mdef{\mathrm C_\bullet^{\mathrm{lf}}(X;\ZZ )}$ with differential defined by the same formula as for ordinary singular chains. Note that the ordinary singular chain complex can be identified with the subcomplex of $\mathrm C_\bullet^{\mathrm{lf}}(X;\ZZ )$ comprised of formal sums $f$ with finite global support $\supp_X(f)$.
\begin{defn}
  The \tdef{singular chain complex at infinity} of $X$, denoted $\mdef{\mathrm C_\bullet^\infty (X;\ZZ )}$, is defined by
  \begin{equation}
  \label{eq:singular-to-locally-finite-chains}
  \mathrm C_\bullet^\infty (X;\ZZ ) = \fib\left(\mathrm C_\bullet(X;\ZZ )\too \mathrm C_\bullet^{\mathrm{lf}}(X;\ZZ )\right),
\end{equation}
where the fiber is computed in $\mathrm D(\ZZ )$. More generally, if $k$ is a commutative ring we put
\[
  \mdef{\mathrm C_\bullet^\infty (X;k)} = k\otimes_{\ZZ}\mathrm C_\bullet^\infty (X;\ZZ ).
\]
\end{defn}
The singular chain complex at infinity is covariantly functorial in proper maps. It is furthermore invariant under \tdef{proper homotopy equivalences}, meaning proper maps $f\colon X\to Y$ for which there exist proper maps
\[
  f'\colon Y\too X, \quad h\colon \lbrack 0,1\rbrack\times X\too X,\quad\text{and}\quad
  h'\colon \lbrack 0,1\rbrack\times Y\too Y
\]
with
\[
  f'f = h\vert_{0\times X},\quad ff' = h'\vert_{0\times Y},\quad\id = h\vert_{1\times X},\quad\text{and}\quad \id = h'\vert_{1\times Y}.
\]

For our purposes, it is more convenient to reformulate the definition of $\mathrm C_\bullet^\infty (X;\ZZ )$ in terms of sheaf theory. If $X$ is a triangularizable space, we can identify~\labelcref{eq:singular-to-locally-finite-chains} with the norm map
\begin{equation}
    \label{eq:borel-moore-to-homology-globsections}
    \cptGlob (X,\dualizingshf X)\too\Glob (X,\dualizingshf X),
  \end{equation}
  where $\omega_X\in\Shv X[\mathrm D(\ZZ )]$ is the Verdier dualizing sheaf. We can now state a reformulation of~\cref{mainthm:tate-hochschild-cohomology}:
\begin{thm}\label{thm:homology-at-infty}
  Let $k$ be a commutative ring. For a second countable locally compact triangularizable $d$-dimensional $k$-homology manifold $X$, there is an equivalence
  \begin{equation}
    \contsgHH{\Shv X[\mathrm D(k)]}\simeq\fib\left( \cptGlob (X,\dualizingshf X)\too\Glob (X,\dualizingshf X)\right)\lbrack -2d+1\rbrack
  \end{equation}
  in $\mathrm D(k)$.
\end{thm}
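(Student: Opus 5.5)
The plan is to compute both terms of~\eqref{eq:Hochschild-homology-of-large-cat} by sheaf theory on $X\times X$, using the explicit compact exhaustion of $\Delta_*k_X$ built in the proof of \cref{thm:second-countability-recovers-notion-from-topology}.

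\textbf{Setting up.} Write $\mathcal D=\Shv X[\mathrm D(k)]$. Verdier duality gives $\mathcal D^\vee\simeq\mathcal D$ and $\mathcal D\otimes_k\mathcal D^\vee\simeq\Shv{X\times X}[\mathrm D(k)]$. Under these identifications $\coev{\mathcal D}(k)\simeq\Delta_*k_X$. Hence the right adjoint is
\[
  \coev{\mathcal D}^R(F)\simeq\Hom{\Delta_*k_X}{F}\simeq\Glob(X,\Delta^!F).
\]
In particular $\coev{\mathcal D}^R\coev{\mathcal D}(k)\simeq\Glob(X,\Delta^!\Delta_*k_X)\simeq\Glob(X,k_X)$.

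\textbf{The exhaustion.} For the ind-system $\lbrace\Phi_n\rbrace$ I take the cofibers $C_n=\cofib(h_{U_n}\to h_{W_n})$ from that proof, passing to the cofinal subsequence $C_{2n}$. By \cref{thm:internal-characterization-of-dualizability}, essential uniqueness means I may choose the exhaustion conveniently. Take a compact exhaustion $K_1\subseteq K_2\subseteq\cdots$ of $X$, choose closed neighbourhoods $N_n$ of $\Delta(K_n)$ with $N_n$ shrinking toward $\Delta(K_n)$, and set $U_n=(X\times X)\setminus(N_n\cup\text{far region})$. Arranged this way, near $\Delta(\mathrm{int}K_n)$ the sheaf $C_n$ is the constant sheaf on a tubular-type neighbourhood, extended by zero. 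I then compute $\Delta^!C_n$ by splitting into two regions. Over $\mathrm{int}K_n$, $C_n$ agrees with $h_{X\times X}$ near the diagonal, so
\[
  \Delta^!C_n\simeq\Delta^!k_{X\times X}\simeq\omega_X^{-1}.
\]
This uses $\Delta^!\omega_{X\times X}=\omega_X$, $\omega_{X\times X}=\omega_X\boxtimes\omega_X$ and the invertibility of $\omega_X$, which is exactly where the homology manifold hypothesis enters. Away from $K_{n+1}$, $\Delta^!C_n$ vanishes. The upshot I aim for is
\[
  \colimit_n\Glob(X,\Delta^!C_n)\simeq\colimit_n\Glob_{K_n}(X,\omega_X^{-1})\simeq\cptGlob(X,\omega_X^{-1}),
\]
with the map to $\Glob(X,\Delta^!\Delta_*k_X)=\Glob(X,\omega_X^{-1}\otimes\omega_X)$, wait, more precisely to $\Glob(X,k_X)$, identified with the norm map $\cptGlob\to\Glob$ twisted appropriately. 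To make the twist consistent I would rather phrase both sides via $\Delta^!k_{X\times X}\simeq\omega_X^{-1}$ and $\Delta^!\Delta_*k_X$ compared through the map $C_n\to\Delta_*k_X$. The cleanest route is the fiber sequence $\Delta^!h_{U_n}\to\Delta^!h_{W_n}\to\Delta^!C_n$ together with $i^!j_!\simeq i^*j_*[-1]$ for the open complement $j$ of the diagonal. Here the $i^*j_*$ terms of the two pieces cancel over $\mathrm{int}K_n$, and this cancellation is what produces the compact-support condition.

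\textbf{Main obstacle.} I expect the hardest step to be this local computation of $\Delta^!C_n$ and its naturality in $n$. One must check that the boundary effects near $\partial K_n$ are ind-negligible, i.e.\ that the transition maps kill them after passing to a subsequence, and that the colimit really is $\cptGlob(X,\omega_X^{-1})$ mapping by the norm map to $\Glob(X,\omega_X^{-1})$. I would first verify all of this on a triangulated regular neighbourhood of the diagonal, where everything reduces to a local statement for $\mathbb R^d$ times a cone.

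\textbf{Final identification.} Since $\omega_X$ is locally $k[d]$ twisted by the orientation sheaf $\mathrm{or}_X$, and $\mathrm{or}_X^{\otimes2}\simeq k_X$, we get $\omega_X^{-1}\simeq\omega_X[-2d]$. Therefore
\[
  \contsgHH{\mathcal D}\simeq\cofib\bigl(\cptGlob(X,\omega_X)\to\Glob(X,\omega_X)\bigr)[-2d]\simeq\fib\bigl(\cptGlob(X,\omega_X)\to\Glob(X,\omega_X)\bigr)[-2d+1],
\]
as claimed.
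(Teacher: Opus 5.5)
The overall route is the paper's: the same exhaustion $C_{2n}$, $\Delta^!h_{U_n}\simeq 0$ because $\overline{U_n}$ misses the diagonal, $\Delta^!h_{W_n}\simeq\Delta^!h_{V_n\times V_n}\simeq(j_n)_!\omega_{V_n}^{-1}$, and precompactness of $V_n$. This gives $\colimit_n\coev{}^R(C_{2n})\simeq\Gamma_c(X,\omega_X^{-1})$. Your worry about boundary effects is handled in the paper by taking the $K_n$ to be compact homology manifolds with boundary (\cref{lem:homology-manifold-exhaustion,lem:diagonal-corestriction-calculation}), so there is no boundary contribution at all.

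\textbf{The gap.} It is exactly where you write ``wait, more precisely''. You correctly get $\coev{}^R\coev{}(k)\simeq\Gamma(X,\Delta^!\Delta_*k_X)\simeq\Gamma(X,k_X)$. But the comparison map is induced by $\Delta^!h_{W_n}\to\Delta^!k_{X\times X}\to\Delta^!\Delta_*k_X$. Its last arrow is the canonical $\Delta^!\to\Delta^*$, i.e.\ a map $\omega_X^{-1}\to k_X$, which is locally null for $d>0$. So no twist turns it into the norm map of $\omega_X^{-1}$.

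Nor do the $i^*j_*$ terms cancel. On the source side $\colimit_n\Delta^!h_{U_n}\simeq 0$. On the target side, with $j\colon X\times X\setminus\Delta(X)\hookrightarrow X\times X$, we have $\Delta^!h_{X\times X\setminus\Delta(X)}\simeq\Delta^*j_*k[-1]\neq 0$ (locally the cochains on the link $S^{d-1}$ of the diagonal). Compare the sequence $\colimit_n\Gamma(X,\Delta^!h_{U_{2n}})\to\colimit_n\Gamma(X,\Delta^!h_{W_{2n}})\to\colimit_n\Gamma(X,\Delta^!C_{2n})$ with its counterpart for $h_{X\times X\setminus\Delta(X)}\to h_{X\times X}\to\Delta_*k_X$. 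This gives a cofiber sequence
\[
  \Gamma\bigl(X,\Delta^!h_{X\times X\setminus\Delta(X)}\bigr)\to\cofib\bigl(\Gamma_c(X,\omega_X^{-1})\to\Gamma(X,\omega_X^{-1})\bigr)\to\contsgHH{\mathcal D}.
\]
So your computation actually yields $\contsgHH{\mathcal D}\simeq\cofib\bigl(\Gamma_c(X,\omega_X^{-1})\to\Gamma(X,k_X)\bigr)$, not the stated formula.

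\textbf{The paper does not close it.} Its proof makes the same identification. It writes the target of $\coev{}^R$ applied to $C_i\to\Delta_*k_X$ as $p_*\Delta^!h_{X\times X}$ instead of $p_*\Delta^!\Delta_*k_X\simeq p_*k_X$. In effect it measures the failure of $\coev{}^R$ to commute with $\colimit_n h_{W_n}\simeq h_{X\times X}$, rather than with $\colimit_n C_{2n}\simeq\Delta_*k_X$.

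\textbf{The identification is false.} Since $\pi_0\Gamma_c(X,\omega_X^{-1})=H_{2d}(X;k)=0$ for $d\geq 1$, the group $H^0(X;k)$ injects into $\pi_0\contsgHH{\mathcal D}$. For $d\geq 2$, however, the right-hand side of the statement is concentrated in degrees $[-2d,-d+1]$ and has $\pi_0=0$. Two concrete checks:
\begin{enumerate}
\item For $X=\mathbb R$ the definition gives $k\oplus k[-1]$, while the formula gives $k[-1]^{\oplus 2}$.
\item For compact connected $X$ of positive dimension the definition gives $\cofib\bigl(\Gamma(X,\omega_X^{-1})\to\Gamma(X,k_X)\bigr)\neq 0$, not $0$.
\end{enumerate}
So this step cannot be repaired, and the statement itself needs amending, e.g.\ to the cofiber formula above.
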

The theorem will require two lemmata:
\begin{lem}\label{lem:homology-manifold-exhaustion}
  Let $R$ be an $E_\infty$-ring spectrum and let $X$ be a $\sigma$-compact locally compact triangulated $d$-dimensional $R$-homology manifold. There exists an exhaustion of $X$ by compact subsets
  \[
    K_1\subseteq K_2\subseteq\cdots\subseteq X
  \]
  such that $X = \bigcup_1^\infty \interior K_j$ and the following conditions are satisfied for each $j\geq 1$:
  \begin{enumerate}[label=(\roman*)]
  \item $K_j\subseteq\interior K_{j+1}$;
  \item there is a subdivison of the triangulation of $X$ for which $K_j$ is a subcomplex; and 
  \item $K_j$ is a $d$-dimensional $R$-homology manifold with boundary \[\partial K_j = K_j\setminus\interior K_j.\]
  \end{enumerate}
\end{lem}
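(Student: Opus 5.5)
The plan is to build the $K_j$ as regular neighbourhoods of finite subcomplexes. These are taken in the second barycentric subdivision of a suitable subdivision.

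Fix the triangulation $\Sigma$ of $X$. Since $X$ is locally compact and triangulated, $\Sigma$ is locally finite. Since $X$ is $\sigma$-compact, $\Sigma$ has countably many simplices. Enumerate them as $\sigma_1,\sigma_2,\ldots$ and choose finite subcomplexes $L_1\subseteq L_2\subseteq\cdots$ with $\bigcup L_j=\Sigma$.

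We proceed inductively. Suppose a compact $K_{j}$ has been constructed which is a subcomplex of some iterated subdivision $\Sigma^{(m_j)}$ of $\Sigma$. Let $M_{j+1}$ be the finite subcomplex of $\Sigma^{(m_j)}$ obtained by taking the closed star of $K_j\cup L_{j+1}$ twice. Define $K_{j+1}$ as the simplicial (regular) neighbourhood of $M_{j+1}$ in the second barycentric subdivision $\Sigma^{(m_j+2)}$. That is, $K_{j+1}$ is the union of closed simplices of $\Sigma^{(m_j+2)}$ meeting $M_{j+1}$; equivalently, it is the closed star of $M_{j+1}$ there. We set $m_{j+1}=m_j+2$.

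Conditions (i) and (ii) and exhaustion are then immediate. $K_{j+1}$ is a finite subcomplex, so it is compact and (ii) holds. It contains an open star of $K_j$, so $K_j\subseteq\interior K_{j+1}$. Finally $L_j\subseteq K_j$, so $X=\bigcup\interior K_j$. Note also that subcomplexes of $\Sigma^{(m_j)}$ remain subcomplexes of all later subdivisions, which is needed for the induction to make sense.

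The main obstacle is (iii): showing that a regular neighbourhood $N$ of a finite subcomplex $M$ in the second derived subdivision of a $d$-dimensional $R$-homology manifold is an $R$-homology manifold with boundary $N\setminus\interior N$. The approach is a local computation of $\dualizingshf N$ at each point $x\in N$.

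\emph{Interior points.} For $x\in\interior N$ we have $\dualizingshf N|_{\interior N}\simeq\dualizingshf X|_{\interior N}$, which is invertible and locally $R[d]$. So these points are fine.

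\emph{Frontier points.} For $x$ in the frontier, we use the standard description of $N$ as $f^{-1}([0,\tfrac12])$ for the simplicial map $f\colon X\to[0,1]$ that is $0$ on vertices of $M$ (in the first derived subdivision) and $1$ on the others. The frontier is then $f^{-1}(\tfrac12)$. Locally near $x$, the pair $(X,N)$ is homeomorphic to $(B\times(-\epsilon,\epsilon),\,B\times(-\epsilon,0])$, a product collar, where $B$ is a neighbourhood of $x$ in $f^{-1}(\tfrac12)$. This follows from linearity of $f$ on simplices away from the levels $0$ and $1$, in the style of the proof that preimages of regular values of PL maps are bicollared.

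From the collar we deduce the local structure. Since $X$ is a homology manifold, the Künneth formula for $\dualizingshf{}$ on a product with $\mathbf R$ shows that $B$ is a $(d-1)$-dimensional $R$-homology manifold. Then $B\times(-\epsilon,0]$ has dualizing sheaf given by the extension by zero from $B\times(-\epsilon,0)$ of an invertible sheaf, concentrated correctly. This is exactly the homology-manifold-with-boundary condition, where local homology at boundary points vanishes.

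I expect the only genuinely delicate point to be verifying the bicollar claim carefully, and checking the Künneth/cancellation step that a factor $B$ with $B\times\mathbf R$ an $R$-homology manifold is itself one. For the latter I would use $\dualizingshf{B\times\mathbf R}\simeq\dualizingshf B\boxtimes R_{\mathbf R}[1]$ and restrict along a slice.
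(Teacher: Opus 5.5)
Your proof is correct, and its outer structure is the paper's. There too the $K_j$ are built by iterating a one-step thickening: a compact subcomplex $K$ is replaced by a subcomplex $K'$ of a subdivision with $K\subseteq\interior K'$ and $K'$ a homology manifold with boundary $K'\setminus\interior K'$.

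The difference lies entirely in that step, which the paper does not prove. It quotes L\"uck--Schick, where $K'$ lives in the first barycentric subdivision and the statement is for $R=H\QQ$, and asserts in a footnote that the argument works for any $E_\infty$-ring. You prove the step directly, in three moves:
\begin{enumerate}
\item you take a derived neighbourhood $N=f^{-1}([0,\tfrac12])$;
\item you use the global product structure $f^{-1}((0,1))\cong f^{-1}(\tfrac12)\times(0,1)$, which comes from the join decomposition of each mixed simplex into its $M'$-face and its complementary face;
\item you use the K\"unneth formula for dualizing sheaves to show that the frontier is an $R$-homology $(d-1)$-manifold and that $\omega_N$ is the extension by zero of $\omega_X|_{\interior N}$.
\end{enumerate}
The citation buys brevity and needs only one subdivision per step. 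Your argument is self-contained and manifestly independent of $R$, so it actually justifies the footnote. It also delivers (iii) in the sheaf-theoretic form used in the next lemma: $\omega_K\simeq j''_!\,\omega_U$, hence by K\"unneth $\omega_{K\times K}\simeq (j''\times j'')_!\,\omega_{U\times U}$.

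One small correction. For dimension at least $2$, the closed star of $M_{j+1}$ in the second \emph{barycentric} subdivision is not literally $f^{-1}([0,\tfrac12])$. The barycentre of a simplex with $p$ vertices in $M'$ and $q$ vertices outside lies at level $q/(p+q)$, not $\tfrac12$. There are two easy fixes:
\begin{enumerate}
\item Define $K_{j+1}:=f^{-1}([0,\tfrac12])$ directly. This is a subcomplex of the derived subdivision of $\Sigma^{(m_j+1)}$ whose new vertices on mixed simplices are placed at level $\tfrac12$. So (ii) still holds, with ``iterated barycentric'' replaced by ``iterated derived'' in your induction.
\item Transport your argument along the simplexwise-linear homeomorphism between the two derived subdivisions. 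It preserves every simplex of $\Sigma^{(m_j+1)}$, fixes $M_{j+1}$, and carries one neighbourhood onto the other.
\end{enumerate}
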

\begin{proof}
  Let $K\subseteq X$ be an arbitrary compact subcomplex. Then as in~\cite[Lem~2.60]{Luck_Schick2005} we can find $K'\subseteq X$ which is a subcomplex with respect to the barycentrically subdivised triangulation of $X$, such that $K\subseteq\interior K'$ and $K'$ is a compact $d$-dimensional $R$-homology manifold with $\partial K' = K'\setminus\interior K'$.\footnote{Although the statement in~\cite{Luck_Schick2005} is for $R = H\QQ$, the proof goes through without modifications for a general $E_\infty$-ring spectrum.} By repeatedly applying this to an exhaustion of $X$ by compact subcomplexes, we get the desired result.
\end{proof}
We will also need to know something about the behavior under $\Delta^!$ of sheaves of the form $h_{U\times U}\in\Shv{X\times X}[\mathrm D(k)]$, where $U\subseteq X$ is a suitable open $k$-homology submanifold. Let us first ponder the case $U = X$. Since $k$ is a commutative ring, there is a canonical equivalence
\begin{equation}
  \label{eq:square-of-dualizing-sheaf}
  \dualizingshf X\otimes\dualizingshf X\xtoo\sim k_X\lbrack -2d\rbrack
\end{equation}
natural in open $k$-homology submanifold inclusions, see~\cite[Prop~5.1.5(c)]{Schapira2026}. For more general reasons, there are also canonical natural equivalences
\[
  \Delta^*(\dualizingshf{X\times X})\simeq\Delta^*(\dualizingshf X\boxtimes\dualizingshf X )\simeq\dualizingshf X\otimes\dualizingshf X,
\]
see~\cite{Volpe2025}.
Applying the Verdier duality functor $D_X = \shHom{-}{\dualizingshf X}[X]$ to~\labelcref{eq:square-of-dualizing-sheaf} and invoking~\cite[Prop~3.3.7]{Dimca2004},
we find
\begin{equation}
  \label{eq:diagonal-corestriction}
  \Delta^!k_{X\times X}\xtoo\sim\dualizingshf X\lbrack-2d\rbrack,
\end{equation}
natural in open $k$-homology submanifold inclusions.
\begin{lem}\label{lem:diagonal-corestriction-calculation}
  Let $k$ be a commutative ring and let $X$ be a locally compact triangulated $d$-dimensional $k$-homology manifold. Let $K\subseteq X$ be a compact subcomplex that is itself a $d$-dimensional $k$-homology manifold with boundary $\partial K = K\setminus\interior K$. Put $U = \interior K$, let $\Delta\colon X\to X\times X$ denote the diagonal map, and let $j\colon U\hookrightarrow X$ be the inclusion. There is an equivalence
  \[
    j_!\dualizingshf U\lbrack -2d\rbrack\xtoo\sim \Delta^!h_{U\times U},
  \]
  natural in $U$.
\end{lem}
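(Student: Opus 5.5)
We prove the equivalence $j_!\dualizingshf U[-2d]\simeq\Delta^!h_{U\times U}$ by reducing it to~\labelcref{eq:diagonal-corestriction} via base change along the open embedding $j\times j$, and then identifying the extension across $\partial K$ through a stalk computation that uses the homology manifold with boundary structure of $K$.

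Write $h_{U\times U}=(j\times j)_!k_{U\times U}$. Consider the cartesian square formed by $\Delta_U\colon U\to U\times U$, $\Delta\colon X\to X\times X$, $j$ and $j\times j$. Note that $\Delta^{-1}(U\times U)=U$.

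\begin{enumerate}
\item Restrict to $U$. Since $j$ is an open embedding, $j^*=j^!$, and $!$-pullbacks compose, so
\[
j^*\Delta^!h_{U\times U}\simeq\Delta_U^!(j\times j)^*h_{U\times U}\simeq\Delta_U^!k_{U\times U}.
\]
By~\labelcref{eq:diagonal-corestriction} applied to the homology manifold $U$, this is $\dualizingshf U[-2d]$.
\item Produce the map. Adjunction $j_!\dashv j^*$ gives a canonical map $j_!\dualizingshf U[-2d]\to\Delta^!h_{U\times U}$. It is an equivalence over $U$ by step 1, and it is natural in $U$ because~\labelcref{eq:diagonal-corestriction} is natural in open submanifold inclusions.
\item Reduce to the complement. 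Let $i\colon X\setminus U\hookrightarrow X$. It remains to show $i^*\Delta^!h_{U\times U}\simeq0$, since the stalks of $j_!\dualizingshf U$ vanish off $U$.
\item Handle points of $X\setminus K$. Such a point $x$ has a neighborhood $V$ with $(V\times V)\cap(U\times U)=\varnothing$, because $K$ is closed. Hence $h_{U\times U}$ vanishes near $(x,x)$, and so does $\Delta^!h_{U\times U}$.
\end{enumerate}

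The main obstacle is a point $x\in\partial K$. There we compute the stalk
\[
(\Delta^!h_{U\times U})_x\simeq\colimit_{x\in V}\Gamma_{\Delta(V)}\bigl(V\times V,h_{U\times U}\bigr),
\]
where $V$ ranges over neighborhoods of $x$. Using the fiber sequence for sections supported on $\Delta(V)$, this amounts to showing that
\[
\Gamma\bigl(V\times V,h_{U\times U}\bigr)\longrightarrow\Gamma\bigl(V\times V\setminus\Delta,h_{U\times U}\bigr)
\]
is an equivalence for a cofinal system of $V$.

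We would do this by Verdier duality. Rewrite $\Gamma(W,h_{U\times U})$ as $\Gamma(W,(j\times j)_!k)$ and dualize it to compactly supported cohomology with coefficients in $\dualizingshf{}$ of the pair $\bigl((U\times U)\cap W,\ \ldots\bigr)$. We then use that near $x$ the pair $(K,\partial K)$ is locally a $k$-homology half-space, $\partial K\times[0,1)$, which follows from condition (iii) of \cref{lem:homology-manifold-exhaustion}.

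Concretely, take a conical (star) neighborhood $V\cong c(L)$ with $V\cap K\cong c(L_+)$, where $L_+$ is a homology disk. Then $U\cap V\simeq$ open cone on $L_+\setminus\partial$, which is contractible. Likewise $(U\times U)\cap V^2\setminus\Delta$ deformation retracts onto a space with the same $k$-cohomology. This follows from a straight-line/cone homotopy pushing pairs away from the cone point, and it works because $U\cap V$ is "star-shaped" toward a boundary point, so the diagonal removes nothing homologically essential.

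A cleaner alternative, which we would try first, is the identity
\[
\Delta^!h_{U\times U}\simeq\shHom{Dh_U}{h_U}
\]
up to shift, using $\Delta^!(F\boxtimes G)\simeq\shHom{DF}{G}$. Then compute $Dh_U=D(j_!k_U)=j_*\dualizingshf U$. The claim reduces to
\[
\shHom{j_*\dualizingshf U}{j_!k_U}\simeq j_!\shHom{\dualizingshf U}{k_U}=j_!\dualizingshf U[-2d],
\]
and its stalks at $x\in\partial K$ vanish because $j_*\dualizingshf U$ near $\partial K$ is, by the half-space local model, the extension by zero of the dualizing sheaf of $K$ relative to its boundary.
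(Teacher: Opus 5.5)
Your steps 1--4 are correct and reproduce the paper's reduction. The paper likewise builds the map as $j_!$ of the inverse of $\Delta_U^!k_{U\times U}\simeq\omega_U[-2d]$ followed by the counit, and reduces to $\operatorname{supp}(\Delta^!h_{U\times U})\subseteq U$. Your separate treatment of points of $X\setminus K$ is fine.

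The gap is the one substantive point: you never prove that $(\Delta^!h_{U\times U})_x\simeq 0$ for $x\in\partial K$. Your cone-neighbourhood sketch computes the cohomology of $U\cap V$ and of $((U\times U)\cap V^2)\setminus\Delta$. That is what enters for $j_*k_U$ and $(j\times j)_*k_{U\times U}$. But $\Gamma(W,h_{U\times U})$ is the \emph{relative} cohomology of the pair $(W,W\setminus(U\times U))$. Moreover, the claim that removing $\Delta$ changes nothing is false. Take $X=\mathbb{R}$, $K=[0,1]$, $x=0$, $V=(-\epsilon,\epsilon)$: the square $(0,\epsilon)^2$ is connected, but $(0,\epsilon)^2\setminus\Delta$ has two components. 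Accordingly, $\Delta^!(j\times j)_*k_{U\times U}$ is nonzero at $0$; $\Delta^!h_{U\times U}$ vanishes there only because the relative groups vanish. Also, a homology manifold with boundary need not be locally $\partial K\times[0,1)$, so the ``half-space model'' does not follow from the hypotheses.

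Your second sketch is the right reformulation: $\Delta^!(h_U\boxtimes h_U)\simeq\mathcal{H}\mathrm{om}(j_*\omega_U,h_U)$, with no shift. However, its last sentence is not an argument. Stalks of an internal Hom are not computed stalkwise, and $j_*\omega_U$ is not extended by zero across $\partial K$: for $K=[0,1]$ one has $j_*\omega_U\simeq i_*k_K[1]$ (with $i\colon K\hookrightarrow X$), which is nonzero at the endpoints.

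What you need is that $L:=j''_*\omega_U\simeq D_K(j''_!k_U)$, where $j''\colon U\hookrightarrow K$, is an \emph{invertible} sheaf on $K$. Granting this, the projection formula gives $\mathcal{H}\mathrm{om}(i_*L,i_*j''_!k_U)\simeq i_*(L^{-1}\otimes j''_!k_U)\simeq j_!(\omega_U^{-1})\simeq j_!\omega_U[-2d]$, which is supported on $U$.

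Invertibility of $L$ is exactly where the boundary hypothesis must be used. Write $i_\partial\colon\partial K\hookrightarrow K$. From $\omega_K\simeq j''_!\omega_U$ you get $i_\partial^!k_K\simeq D_{\partial K}(i_\partial^*\omega_K)\simeq 0$, hence $k_K\simeq j''_*k_U$. You also get $i_\partial^*L\simeq i_\partial^!\omega_K[1]\simeq\omega_{\partial K}[1]$. It then remains to check local constancy of $L$ near $\partial K$. For example, show that the orientation sheaf of $U$ is trivial near boundary points; this follows from the invertibility of these stalks when $\partial K$ is a homology $(d-1)$-manifold.

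For comparison, the paper treats the boundary by writing $\Delta^!h_{U\times U}\simeq D_X\Delta^*D_{X\times X}h_{U\times U}$ and base-changing to $K\times K$. Be aware, though, that $D_{K\times K}h_{U\times U}\simeq(j''\times j'')_*\omega_{U\times U}$, not $(j''\times j'')_!\omega_{U\times U}$. Also, $D_X$ does not preserve being supported on $U$, so the support cannot be read off before dualizing. Once corrected, that route needs the same fact, namely invertibility of $L$. This gives $\Delta_K^*(j''\times j'')_*\omega_{U\times U}\simeq L^{\otimes 2}$, and then $D_X(i_*L^{\otimes 2})\simeq j_!(\omega_U^{-1})$.
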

\begin{proof}
  We have the natural map
  \begin{equation}
    \label{eq:U-diagonal-corestriction}
    \Delta_U^!k_{U\times U}\xtoo\sim\dualizingshf U\lbrack -2d\rbrack
  \end{equation}
  as in~\labelcref{eq:diagonal-corestriction}, where $\Delta_U\colon U\to U\times U$ is the diagonal map. Applying $j_!$ to the inverse of~\labelcref{eq:U-diagonal-corestriction}, we produce
  \[
    j_!\dualizingshf U\lbrack -2d\rbrack\xtoo\sim j_!\Delta_U^!k_{U\times U}\simeq j_!j^!\Delta^!h_{U\times U}\too \Delta^!h_{U\times U},
  \]
  where the rightmost map is the counit of the $j_!\dashv j^!$ adjunction, and for the identification in the middle we have used that the unit map $k_{U\times U}\to (j\times j)^!(j\times j)_!k_{U\times U}\simeq (j\times j)^!h_{U\times U}$ is an equivalence, as well as the fact that $(j\times j)\circ \Delta_U = \Delta \circ j$.

  It suffices now to show that the counit map $j_!j^!\Delta^!h_{U\times U}\to\Delta^!h_{U\times U}$ is an equivalence, or in other words that
  \begin{equation}
    \label{eq:diagonal-corestriction-support}
    \supp (\Delta^!h_{U\times U})\subseteq U.
  \end{equation}
  Using the Verdier duality functor and the fact that the canonical map $h_{U\times U}\to D_{X\times X}D_{X\times X}h_{U\times U}$ is an equivalence, we rewrite
  \[
    \Delta^!h_{U\times U}\simeq D_X\Delta^*D_{X\times X}h_{U\times U},
  \]
  where we have again invoked~\cite[Prop~3.3.7]{Dimca2004}. Let $i\colon K\to X$ and $\Delta_K\colon K\to K\times K$ denote the inclusion and diagonal maps, respectively. Let also $j''\colon U\hookrightarrow K$ denote the inclusion, so that $j = i\circ j''$. We can then write $h_{U\times U}\simeq (i\times i)_!h_{U\times U}$, whence
  \[
    \Delta^*D_{X\times X}h_{U\times U}\simeq\Delta^*D_{X\times X}i_!h_{U\times U}\simeq i_!\Delta_K^*D_{K\times K}h_{U\times U}\simeq i_!\Delta_K^*(j''\times j'')_!\dualizingshf{U\times U},
  \]
  where for the last equivalence we use that $\dualizingshf{K\times K}\simeq (j''\times j'')_!\dualizingshf{U\times U}$ since $K\times K$ is a $k$-homology manifold with boundary $\partial (K\times K) = (K\times K)\setminus (U\times U)$. Base change gives
  \[
    i_!\Delta_K^*(j''\times j'')_!\dualizingshf{U\times U}\simeq j_!\Delta_U^*\dualizingshf{U\times U}\simeq j_!k_U\lbrack -2d\rbrack,
  \]
  where the last equivalence is~\labelcref{eq:square-of-dualizing-sheaf}. But $x^*j_!\simeq 0$ for $x\not\in U$, giving~\labelcref{eq:diagonal-corestriction-support}.
\end{proof}
We are now ready for the main event of this section:
\begin{proof}[Proof of~\cref{thm:homology-at-infty}]
  Let $p\colon X\to\pt$ and $\Delta\colon X\to X\times X$ denote the point projection and diagonal maps, respectively.
  Let $K_1\subseteq K_2\subseteq\cdots\subseteq X$ be an exhaustion by $k$-homology manifolds as in~\cref{lem:homology-manifold-exhaustion}, and let $V_i = \interior K_i$ denote their interiors. Let $\lbrace U_i\rbrace_1^\infty$ be an exhaustion of $(X\times X)\setminus\Delta (X)$ as in the proof of~\cref{thm:second-countability-recovers-notion-from-topology}, and let $W_i = (V_i\times V_i)\cup U_{i+1}$. We saw in the aforementioned proof that
  \[
    C_2\too C_4\too C_6\too\cdots
  \]
  is a compact exhaustion of $\Delta_*k_X$, where
  \[
    C_i = \cofib\left( h_{U_i}\too h_{W_i}\right).
  \]
  Note that $\coev{}^R(h_{U_i}) \simeq p_*\Delta^!h_{U_i}\simeq 0$ since $U_i\cap\Delta (X) = \varnothing$, and for the same reason the image under $\coev{}^R$ of the counit $h_{V_i\times V_i}\to h_{W_i}$ is an equivalence. Applying $\coev{}^R$ to $C_i\to \Delta_*k_X$ therefore spits out the canonical map
  \[
    p_*\Delta^!h_{V_i\times V_i}\too p_*\Delta^!h_{X\times X}.
  \]
  From~\cref{lem:diagonal-corestriction-calculation}, we get a natural identification of this map with the canonical map
  \[
    p_*(j_i)_!\dualizingshf{V_i}\lbrack -2d\rbrack\too p_*\dualizingshf{X}\lbrack -2d\rbrack,
  \]
  Furthermore, since $V_i$ is precompact in $X$, the norm map
  \[
    p_!(j_i)_!\dualizingshf{V_i}\lbrack -2d\rbrack\too p_*(j_i)_!\dualizingshf{V_i}\lbrack -2d\rbrack
  \]
  is an equivalence. Since $p_!$ is a left adjoint and hence preserves colimits, we thus get an identification of
  \[
    \colimit \coev{}^RC_{2i}\too \coev{}^R\coev{}(k)
  \]
  with
  \[
    \cptGlob (X,\dualizingshf X)\lbrack -2d\rbrack\too\Glob (X,\dualizingshf X)\lbrack -2d\rbrack
  \]
  as desired.
\end{proof}
The homology at infinity of $X$ is closely related to the ends of $X$. To make this precise, we will need an additional assumption. Recall that Quinn calls a locally compact Hausdorff space $X$ \tdef{forward tame} if there is a closed cocompact subspace $Y\subseteq X$ and a proper map $\varphi\colon Y\times\lbrack 0,\infty )\to X$ such that $\varphi\vert_{Y\times 0}$ agrees with the inclusion of $Y$ into $X$, see~\cites[][]{Quinn1988}[][Ch.~12]{Hughes_Ranicki1996}. For instance, this holds if $X$ is the interior of a closed manifold with boundary.
\begin{cor}\label{cor:number-of-ends}
  Let $X$ be a locally compact triangularizable $d$-dimensional homology manifold. If $X$ is forward tame, then
  \[
    \operatorname{rank}\left(\pi_{-2d}\inftyHH{\Shv X[\mathrm D(\ZZ )]}\right) = \#\lbrace\text{ends of }X \rbrace
  \]  
\end{cor}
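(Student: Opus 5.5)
By \cref{thm:homology-at-infty} (with $k=\ZZ$) we have
\[
  \inftyHH{\Shv X[\mathrm D(\ZZ )]}\simeq\mathrm C^\infty_\bullet (X;\ZZ )\lbrack -2d+1\rbrack ,
\]
so the plan is to reduce to a purely topological statement about one homology group at infinity and then compute that group using forward tameness. I read the shift so that $\pi_{-2d}$ of the left-hand side is $H_0$ of $\mathrm C^\infty_\bullet (X;\ZZ )$. The first thing to check is that this matches the paper's conventions for $\lbrack n\rbrack$. If it does not, the same argument applies verbatim to the neighbouring degree, which is the cokernel of $H_0(X)\to H^{\mathrm{lf}}_0(X)$.

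The first step is to rewrite $\mathrm C^\infty_\bullet$ as a homotopy limit. Choose a compact exhaustion $K_1\subseteq K_2\subseteq\cdots$ of $X$ by subcomplexes; this exists since $X$ is triangulable and $\sigma$-compact. Locally finite chains satisfy
\[
  \mathrm C^{\mathrm{lf}}_\bullet (X)\simeq\limit_n\mathrm C_\bullet (X,X\setminus\interior K_n).
\]
For each $n$ there is a cofiber sequence
\[
  \mathrm C_\bullet (X\setminus\interior K_n)\too\mathrm C_\bullet (X)\too\mathrm C_\bullet (X,X\setminus\interior K_n).
\]
The middle term is constant in $n$, so passing to the limit identifies
\[
  \mathrm C^\infty_\bullet (X)=\fib\bigl(\mathrm C_\bullet (X)\to\mathrm C^{\mathrm{lf}}_\bullet (X)\bigr)\simeq\limit_n\mathrm C_\bullet (X\setminus\interior K_n).
\]
The Milnor sequence then gives
\[
  0\too{\limit_n}^1 H_1(X\setminus\interior K_n)\too H^\infty_0(X)\too\limit_n H_0(X\setminus\interior K_n)\too 0.
\]

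The second step is to compute the two outer terms. Since $X$ is a triangulable homology manifold, the sets $X\setminus\interior K_n$ are locally path connected, so components agree with path components. Hence $H_0(X\setminus\interior K_n)\cong\ZZ^{\pi_0(X\setminus\interior K_n)}$, with transition maps induced by the maps on $\pi_0$. When the inverse system of finite sets $\pi_0(X\setminus\interior K_n)$ stabilises, the limit is free of rank equal to $\#\lbrace\text{ends of }X\rbrace$. If it does not stabilise, both sides of the corollary are infinite; this is where one must interpret ``rank'' with some care.

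The main obstacle is using forward tameness to control the pro-systems. I would use the proper map $\varphi\colon Y\times\lbrack 0,\infty )\to X$ to show two things, following Hughes--Ranicki, Ch.~12. First, the pro-group $\lbrace H_1(X\setminus\interior K_n)\rbrace_n$ is pro-isomorphic to a tower of surjections, so Mittag-Leffler holds and ${\limit}^1$ vanishes. Second, the pro-set $\lbrace\pi_0(X\setminus\interior K_n)\rbrace_n$ is pro-isomorphic to the constant pro-set $\pi_0$ of the end. The idea is that the neighbourhoods of infinity $\varphi(Y\times\lbrack t,\infty ))$ deform into one another, which makes the homology of neighbourhoods of infinity pro-stable. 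Combining the two points shows that $H^\infty_0(X)$ is free of rank equal to the number of ends, which is the claim.
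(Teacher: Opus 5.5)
Your route is the paper's. The paper's proof is \cref{thm:homology-at-infty} plus a citation of Hughes--Ranicki for the fact that, for forward tame $X$, $H^\infty_0(X)$ has rank equal to the number of ends. Your identification $\mathrm C^\infty_\bullet(X)\simeq\lim_n\mathrm C_\bullet(X\setminus\interior K_n)$, the Milnor sequence and the Mittag-Leffler argument give a correct, self-contained replacement for that citation.

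The genuine gap is the degree, which you leave open, and your fallback is wrong. The paper's shifts are suspensions (e.g.\ $\dualizingshf{\RR^d}\simeq\ZZ\lbrack d\rbrack$). Hence
\[
\pi_{-2d}\bigl(\mathrm C^\infty_\bullet(X;\ZZ)\lbrack -2d+1\rbrack\bigr)=H^\infty_{-1}(X)=\mathrm{coker}\bigl(H_0(X)\to H^{\mathrm{lf}}_0(X)\bigr).
\]
The opposite convention would give $H^\infty_{1-4d}$, so no convention turns $\pi_{-2d}$ into $H^\infty_0$, and your ``reading'' is not available.

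Nor does the same argument apply verbatim to this neighbouring degree. In your own Milnor sequence $H^\infty_{-1}(X)\cong\lim^1_n H_0(X\setminus\interior K_n)$, and this vanishes by the very Mittag-Leffler property you derive from forward tameness. For $X=\RR$ the group is $0$, while $\RR$ has two ends.

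So the corollary as printed is off by one. What your argument actually proves is the corrected statement with $\pi_{-2d+1}$, i.e.\ $H^\infty_0(X)$. This is also the degree the paper uses when it applies the corollary to surfaces ($\pi_{-3}$ for $d=2$). You should state this correction explicitly rather than force the indexing.

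Two further remarks. First, the pro-set $\{\pi_0(X\setminus\interior K_n)\}_n$ is pro-constant only when there are finitely many ends. Forward tameness does not bound the number of ends (consider the interior of a thickened infinite binary tree), so in general you should claim only Mittag-Leffler. That follows from properness of $\varphi$:
\begin{itemize}
\item $\varphi^{-1}(K_n)\subseteq L\times\lbrack 0,T\rbrack$ for some compact $L\subseteq Y$;
\item $\varphi^{-1}(K_m)\subseteq Y\times\lbrack 0,T_m\rbrack$ for each $m$.
\end{itemize}
So $\varphi$ carries any cycle in $Y\setminus L$, through $X\setminus K_n$, to a cycle missing $K_m$.

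Second, both your argument and the paper's take \cref{thm:homology-at-infty} on trust, and its proof has a problem you should know about. Since $\Delta^!\Delta_*\simeq\id$, one has $\coev{}^R\coev{}(\ZZ)\simeq\operatorname{End}(\Delta_*\ZZ_X)\simeq\Glob (X,\ZZ_X)$. The proof instead identifies this with $\Glob (X,\dualizingshf X)\lbrack -2d\rbrack$, silently discarding $\Delta^!h_{(X\times X)\setminus\Delta(X)}\simeq\Delta^*j_*\ZZ\lbrack -1\rbrack$ ($j$ the inclusion of the complement of the diagonal), which is nonzero for $d\geq 1$. Redoing the last step with the correct target for $X=\RR$ gives $\ZZ\oplus\ZZ\lbrack -1\rbrack$ rather than $\ZZ^2\lbrack -1\rbrack$, so no homotopy group has rank equal to the number of ends. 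The reduction step is therefore not secure either, independently of the indexing issue.
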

\begin{proof}
  This follows from~\cref{thm:homology-at-infty} and~\cite[p.~137f]{Hughes_Ranicki1996}
\end{proof}

\subsection{Tate--Hochschild cohomology of algebras}\label{subsec:algebra}
Throughout this subsection, $\basering$ is commutative discrete ring.
\subsubsection{Review of Tate--Hochschild cohomology of discrete algebras}
Recall that the bounded derived category of a discrete associative $\basering$-algebra $A$ is the Dwyer--Kan localization
\[
  \modules[A]^{\mathrm{bd}} = \operatorname{Ch}^{\mathrm{bd}}_A\lbrack \text{quasi-isomorphisms}^{-1}\rbrack,
\]
where $\operatorname{Ch}^{\mathrm{bd}}_A$ is the 1-category of bounded chain complexes of finitely-generated $A$-modules. 
\begin{defn}[{Buchweitz~\cite{Buchweitz1986}}]\label{defn:singular-category}
  The \tdef{singular category}\footnote{Buchweitz uses the term ``stabilized category.''} of $A$ is the Verdier quotient
\[
  \mdef{\operatorname{Sg}(A)} = \modules[A]^{\mathrm{bd}}/\operatorname{Perf}(A),
\]
where $\operatorname{Perf}(A) = \modules[A]^{\aleph_0}$ is the category of perfect complexes.
\end{defn}
\begin{rmk}
  Note that the singular category of a Noetherian commutative ring $A$ vanishes iff $A$ has finite global dimension, or in other words iff $A$ is regular~\cite[Lem~10.110.8]{Stacks2026}, whence the name ``singular category.''
\end{rmk}
\begin{rmk}
  For $M$ and $N\in\modules[A]^{\mathrm{bd}}$, the Ext-group
\begin{equation}
  \label{eq:Tate-cohomology-groups}
  \operatorname{Ext}^i_{\operatorname{Sg}(A)}(M,N) = \pi_{-i}\Hom{M}{N}[\operatorname{Sg}(A)]
\end{equation}
is referred to as the $i$-th \tdef{Tate cohomology group} of $M$ with values in $N$.
In particular, if $A = \basering\lbrack G\rbrack$ for some finitely-generated group $G$, then $\operatorname{Sg}(A)$ is equal to the stable module category of $G$, so~\labelcref{eq:Tate-cohomology-groups} recovers the usual notion of Tate cohomology.
\end{rmk}
Suppose now that the enveloping algebra $A^e = A\otimes_\basering A^{\mathrm{op}}$ is left Noetherian. Inspired by the definition of Hochschild cohomology 
\[
  \operatorname{HH}(A,M) = \Hom{A}{M}[\modules[A^e]]\in\modules[\basering],
\]
$M\in\modules[A^e]$, one can then give the following ``singular'' or ``Tate'' variant of~\cref{defn:singular-category}:
\begin{defn}[{Eu--Schedler~\cite{Eu_Schedler2009}, Bergh--Jorgensen~\cite{Bergh_Jorgensen2013}}, Keller~\cite{Keller2018}]
  Given $M\in\modules[A^e]^{\mathrm{bd}}$, the \tdef{Tate--Hochschild cohomology} of $A$ with coefficients in $M$ is the endomorphism object
\[
  \mdef{\sgHH{A}[M]} = \Hom{A}{M}[\operatorname{Sg}(A^e)]\in\modules[\basering].
\]
\end{defn}
From this definition, it is not clear that $\sgHH{-}$ should admit an interesting extension to a large class of dualizable stable categories. In particular, for something like the derived category of a locally compact Hausdorff space, there is no obvious replacement for the singular category. 
\begin{rmk}
  Wang~\cite{Wang2021} defined the Tate--Hochschild cohomology of an arbitrary (non-Noetherian) associative algebra, see also~\cite{Keller2018}. Our definition of Tate--Hochschild cohomology does not make sense in this generality.
\end{rmk}
\subsubsection{Comparison with our non-commutative definition}
The derived category $\modules[A]$ of an associative $\basering$-algebra $A$ is compactly-generated and thus a dualizable stable $\basering$-linear category, with $(\modules[A])^\vee\simeq\modules[A^{\mathrm{op}}]$. There is a canonical equivalence
\begin{equation}
  \label{eq:bimodules-is-modules-over-enveloping}
  \modules[A]\otimes_{\basering}\modules[A^{\mathrm{op}}]\simeq\modules[A^e].
\end{equation}
Under this equivalence, the object $\coev{}(\basering)\in\modules[A]\otimes_{\basering}\modules[A^{\mathrm{op}}]$ identifies with $A$ equipped with the usual $A^e$-module structure.
It follows (see~\cref{rmk:compactness-in-cpt-gen-categories}) that
\begin{prop}\label{prop:second-countable-algebra}
  Let $A$ be a discrete associative $\basering$-algebra. $\modules[A]$ is second-countable if and only if $A$ admits an $A^e$-linear resolution
  \begin{equation}
    \label{eq:fpinfty-resolution}
    \cdots\too P_2\too P_1\too P_0\too A,
  \end{equation}
  where each $P_i$ is a finitely-generated projective $A^e$-module. 
\end{prop}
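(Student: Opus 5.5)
The plan is to translate second-countability into a statement about $A$ as an $A^e$-module and then apply the standard characterization of sequential colimits of perfect complexes. By the discussion preceding the proposition, under the equivalence~\labelcref{eq:bimodules-is-modules-over-enveloping} the object $\coev{}(\basering)$ is $A$ with its bimodule structure. By the remark following~\cref{defn:second-countable}, $\modules[A]$ is second-countable if and only if $A$ is $\aleph_1$-compact in $\modules[A^e]$. Since $\modules[A^e]$ is compactly generated, \cref{thm:internal-characterization-of-dualizability}(ii) together with~\cref{rmk:compactness-in-cpt-gen-categories} shows that this holds if and only if $A\simeq\colimit_n Q_n$ for a sequence $Q_0\to Q_1\to\cdots$ of perfect $A^e$-modules. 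It therefore remains to show that $A$ is such a sequential colimit of perfect complexes if and only if it has a resolution~\labelcref{eq:fpinfty-resolution}.

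For the ``if'' direction, I would take $Q_n$ to be the brutal truncation $P_{\le n}$ of the resolution. Each $Q_n$ is a bounded complex of finitely generated projectives, hence perfect. The colimit $\colimit_n Q_n$ is computed termwise and recovers the complex $P_\bullet$, which is quasi-isomorphic to $A$.

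For the ``only if'' direction, write $A\simeq\colimit_n Q_n$ and build the $P_i$ inductively, at each stage killing the next homotopy group of a fiber by finitely many generators. First, $\pi_0$ commutes with filtered colimits and $\Hom{Q_n}{-}$ commutes with them too. Hence the identity of $A$ factors, up to the relevant truncation, through some $Q_n$. From this I would deduce that the functors $\operatorname{Ext}^i_{A^e}(A,-)$ commute with filtered colimits of modules concentrated in degree $0$, for all $i$. I expect this to be the main point, and the step I would check carefully; the argument is the standard one showing $\aleph_1$-compact implies $\mathrm{FP}_\infty$. More precisely, for a filtered system of discrete modules $M_j$ one has \[\Hom{A}{\colimit_j M_j}\simeq\lim_n\Hom{Q_n}{\colimit_j M_j},\] and a Milnor sequence argument is needed to exchange the limit over $n$ with the colimit over $j$.

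A cleaner route, which I would try first, is a finiteness argument run degreewise. Fix $m$ and consider the truncated object $\tau_{\ge -m}$, working with connective covers. The identity of $A$ is then a retract, up to $m$-truncation, of a map through some perfect $Q_n$, because mapping out of $A$ into the $m$-truncation of a colimit only sees finitely many homotopy groups of the compact $Q_n$. This shows that $A$ is pseudo-coherent ("almost perfect"), i.e. $m$-truncatedly a retract of a perfect complex for every $m$. The classical characterization of pseudo-coherent complexes (Stacks Project, Tag 064N) then produces a resolution by finite free modules. Concretely, $A$ is finitely generated, so one chooses a surjection $P_0\to A$ from a finite free module. The kernel is again pseudo-coherent by the 2-out-of-3 property, and one iterates.
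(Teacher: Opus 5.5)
Your first reduction is correct, and it is in fact the whole of the paper's proof. By the remark after the definition of second-countability and the remark on compact morphisms in compactly generated categories, $\mathrm{Mod}_A$ is second-countable if and only if $A\simeq\operatorname{colim}_n Q_n$ in $\mathrm D(A^e)$ with every $Q_n$ perfect. Your ``if'' direction via the stupid truncations $P_{\le n}$ is also fine.

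\textbf{The gap.} It is in the ``only if'' direction, and it cannot be closed, because that implication is false. The failing step is ``hence the identity of $A$ factors, up to the relevant truncation, through some $Q_n$''. Compactness of $Q_n$ lets you push maps \emph{out of} $Q_n$ into $\operatorname{colim}_m Q_m$ down to a finite stage. It gives no control over maps out of $A$ itself. Factoring $\mathrm{id}_A$ through a finite stage, even after truncation, requires $\mathrm{Hom}_{A^e}(A,-)$ to commute with that sequential colimit, i.e.\ that $A$ is already (almost) compact, which is the conclusion. The interchange of $\lim_n$ and $\operatorname{colim}_j$ in your Milnor-sequence variant needs the same missing input. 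There is no standard argument that $\aleph_1$-compact implies $\mathrm{FP}_\infty$: $\aleph_1$-compactness is a countability condition, not a finiteness condition. For example, $\QQ\simeq\operatorname{colim}(\ZZ\xrightarrow{2}\ZZ\xrightarrow{3}\ZZ\xrightarrow{4}\cdots)$ is $\aleph_1$-compact in $\mathrm D(\ZZ)$ but not finitely generated.

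\textbf{A counterexample.} Let $A=k[x_1,x_2,\ldots]$ over a field $k$.
\begin{enumerate}
\item Then $A^e=k[x_i,y_i]_{i\geq 1}$ and $A=A^e/(x_i-y_i)_{i\geq 1}$.
\item Since $x_1-y_1,x_2-y_2,\ldots$ is a regular sequence, $A$ is the colimit of the finite Koszul complexes $K(x_1-y_1,\ldots,x_n-y_n)$. These are perfect, so $\mathrm{Mod}_A$ is second-countable.
\item A resolution by finitely generated projectives would make $A$ finitely presented over $A^e$. That would force the kernel $(x_i-y_i)_{i\geq 1}$ of the multiplication map $A^e\to A$ to be finitely generated, which it is not.
\end{enumerate}
Similarly, take $A=k[G]$ for $G$ free of countably infinite rank. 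It has the resolution $0\to\bigoplus_{\NN}A^e\to A^e\to A\to 0$, so $\mathrm{Mod}_{k[G]}$ is second-countable although $G$ is not even finitely generated, let alone $\mathrm{FP}_\infty$.

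\textbf{What is actually true.} The paper's one-line proof passes over exactly this point. It only establishes the equivalence with ``$A$ is a sequential colimit of perfect $A^e$-modules''. A finitely generated projective resolution implies this, but not conversely. The iterative step at the end of your proposal does go through with ``countably generated'' in place of ``finitely generated'' (for discrete $A^e$):
\begin{enumerate}
\item A discrete $\aleph_1$-compact module is countably generated, being the $\aleph_1$-filtered union of its countably generated submodules.
\item The kernel of a surjection onto it from a countably generated free module is again discrete and $\aleph_1$-compact.
\item Iterating yields a resolution by countably generated free $A^e$-modules.
\end{enumerate}
Together with the easy converse, this gives the correct algebraic characterization of second-countability.
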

\begin{exam}
  If $A = \basering\lbrack G\rbrack$ for some group $G$, then an extension-of-scalars argument shows that the existence of a resolution~\labelcref{eq:fpinfty-resolution} is equivalent to the condition that $G$ is $\mathrm{FP}_\infty$.
\end{exam}
\begin{exam}
  If $A^e$ is Noetherian, then $A$ automatically admits a resolution of the form~\labelcref{eq:fpinfty-resolution}. Indeed, $A$ is a cyclic $A^e$-module generated by the unit $1\in A$. 
\end{exam}
\begin{prop}
  Let $A$ be a discrete associative $\basering$-algebra satisfying the equivalent conditions of~\cref{prop:second-countable-algebra}. There is an equivalence
  \[
    \contsgHH{\modules[A]}\simeq\sgHH{A}[A].
  \]
\end{prop}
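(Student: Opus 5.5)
Under the equivalence~\eqref{eq:bimodules-is-modules-over-enveloping}, the coevaluation $\coev{}\colon\modules[\basering]\to\modules[A^e]$ sends $\basering$ to the diagonal bimodule $A$. Its right adjoint is therefore
\[
  \coev{}^R\simeq\Hom{A}{-}[\modules[A^e]],
\]
so $\coev{}^R\coev{}(\basering)\simeq\operatorname{HH}(A,A)$ is ordinary Hochschild cohomology. The plan is to compute both sides of~\eqref{eq:Hochschild-homology-of-large-cat} explicitly from the resolution~\eqref{eq:fpinfty-resolution}, and then to match the cofiber with the standard formula for $\sgHH{A}[A]$.

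\textbf{Step 1 (the exhaustion).} Let $P_\bullet\to A$ be the resolution of~\cref{prop:second-countable-algebra}. Let $\Phi_n=\sigma_{\le n}P_\bullet$ be its brutal truncation, viewed in $\modules[A^e]$. Each $\Phi_n$ is a perfect complex, hence compact, and $\colimit_n\Phi_n\simeq A$. By~\cref{rmk:compactness-in-cpt-gen-categories} the transition maps are compact, so $\{\Phi_n\}$ is the essentially unique Ind-object of~\cref{thm:internal-characterization-of-dualizability}.

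\textbf{Step 2 (the cofiber).} Set $\Omega^{n+1}A=\fib(\Phi_n\to A)$. Up to the shift $[n+1]$, this is the $(n+1)$-st syzygy of $A$ in the resolution. Taking cofibers of $\coev{}^R$ in~\eqref{eq:Hochschild-homology-of-large-cat} and commuting with the sequential colimit gives
\[
  \contsgHH{\modules[A]}\simeq\colimit_n\Sigma\Hom{A}{\Omega^{n+1}A}[\modules[A^e]]\simeq\colimit_n\Hom{A}{\Omega^{n}_{\mathrm{syz}}A\,[n]}[\modules[A^e]],
\]
where $\Omega^n_{\mathrm{syz}}$ denotes the discrete syzygy module. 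The transition maps are the connecting maps obtained by dimension-shifting along the short exact sequences $0\to\Omega^{n+1}_{\mathrm{syz}}\to P_n\to\Omega^n_{\mathrm{syz}}\to0$.

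\textbf{Step 3 (comparison).} The right-hand colimit is exactly the formula for Tate--Hochschild cohomology used by Wang and Keller:
\[
  \sgHH{A}[A]\simeq\colimit_n\Hom{A}{\Omega^n_{\mathrm{syz}}A[n]}[\modules[A^e]].
\]
This formula is Buchweitz's computation of morphisms in $\operatorname{Sg}(A^e)$. In the singular category, $A\simeq\Omega^n_{\mathrm{syz}}A[n]$, and
\[
  \Hom{M}{N}[\operatorname{Sg}]\simeq\colimit_n\Hom{\Omega^nM[n]}{\Omega^nN[n]}[\modules[A^e]],
\]
with the maps induced by the syzygy functor. Reconciling this with the one-sided colimit in Step 2 uses Keller's identification of the two formulas, or equivalently a direct argument that
\[
  \Hom{\Omega^nA[n]}{-}[\operatorname{Sg}]\simeq\colimit_m\Hom{\Omega^{n+m}A[n+m]}{-}[\modules[A^e]].
\]
This holds because any map from $\Omega^nA[n]$ to a perfect complex factors through the quotient map from some $\Phi_m$. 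It remains to check that the transition maps agree under these identifications.

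\textbf{Main obstacle.} The main difficulty is Step 3. The paper's definition of $\sgHH{A}[A]$ requires $A^e$ to be Noetherian, so that $\modules[A^e]^{\mathrm{bd}}$ makes sense. Under only the $\mathrm{FP}_\infty$ hypothesis, I would instead take the statement to mean Wang's or Keller's definition, which is via the colimit above. I would then verify that it agrees with the Buchweitz Verdier-quotient Hom whenever the latter is defined, using the factorization property of maps to perfect complexes through the truncations $\Phi_m$.
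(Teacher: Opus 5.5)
Your proposal is essentially the paper's proof. The paper likewise takes the brutal truncations of a finitely generated projective $A^e$-resolution as the compact exhaustion, passes the cofiber through the colimit to get $\colimit_n\Hom{A}{\cofib(\Phi_n\to A)}[\modules[A^e]]$, and then simply cites Keller's Lemma~2.4 for the identification with $\sgHH{A}[A]$; that is your first option in Step~3.

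Your alternative ``direct argument'' has the variance reversed, though. What is needed is $\Hom{A}{A}[\operatorname{Sg}(A^e)]\simeq\colimit_m\Hom{A}{\cofib(\Phi_m\to A)}[\modules[A^e]]$, with the syzygies in the \emph{target}. The input for it is that every map from a perfect complex to $A$ lifts along $\Phi_m\to A$ for some $m$, by compactness and $A\simeq\colimit_m\Phi_m$, not a factorization of maps out of $\Omega^nA[n]$.
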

\begin{proof}
  Let $P_\bullet\to A$ be a resolution by finitely-generated projective $A^e$-modules.
  Let
  \[
    \operatorname{Fil}^0P_\bullet\longrightarrow\operatorname{Fil}^{-1}P_\bullet\longrightarrow \operatorname{Fil}^{-2}P_\bullet\longrightarrow\cdots\longrightarrow A
  \]
  denote the associated (``stupid'') filtration of $A$ in $\modules[A^e]$, so that $\operatorname{Fil}^nP_\bullet$ is the underlying object in $\modules[A^e]$ of the chain complex with
  \[
    (\operatorname{Fil}^{n}P_\bullet )_i =
    \begin{cases}
      P_i,&\text{if }i\geq n,\\
      0,&\text{else}
    \end{cases}
  \]
  and the obvious differential. This is a perfect complex, i.e.~a compact object of $\modules[A^e]$, whence by definition
  \[
    \contsgHH{\modules[A]}\simeq\colimit_n\Hom{A}{\cofib (\operatorname{Fil}^{-n}P_\bullet\to A)}[A^e].
  \]
  The stated equivalence is then~\cite[Lem~2.4]{Keller2018}.
\end{proof}


\section{The Gorenstein locus}\label{sec:gorenstein-locus}
A celebrated theorem of Bondal and Orlov states that a smooth projective variety $X$ can be reconstructed from its derived category $\mathrm{DQCoh}(X)$ if either the canonical bundle $K_X$ or its dual $K_X^\vee$ is ample~\cite{Bondal_Orlov2001}. The main insight in their proof is that $\mathrm{DQCoh}(X)$ has an intrinsic duality, implemented by the Serre endofunctor
\[
  K_X\otimes {-}\colon \mathrm{DQCoh}(X)\longrightarrow \mathrm{DQCoh}(X).
\]
The existence of an essentially unique Serre endofunctor is a general property of dualizable stable $\infty$-categories, and it is this that allows us to prove~\cref{mainthm:gorenstein-locus} and its corollary~\cref{mainthm:reconstruction}(i) by exploiting that the Serre endofunctor of $\Shv X[\mathrm D(\ZZ )]$ for a locally compact Hausdorff space $X$ is given by
\[
  \dualizingshf X\otimes{-}\colon \Shv X[\mathrm D(\ZZ )]\longrightarrow \Shv X[\mathrm D(\ZZ )],
\]
where $\dualizingshf X$ is the Verdier dualizing sheaf of $X$.
\subsection{The homology manifold locus of a space}
Let $\basering$ be a connective $E_\infty$-ring such that $\pi_0\basering$ is a principal ideal domain. (The condition on $\pi_0\basering$ ensures that an $\basering$-module is invertible iff it is of the form $\basering\lbrack n\rbrack$ for some $n\in\ZZ$.)
\begin{defn}\label{defn:homological-regular-locus}
  Let $X$ be a locally compact Hausdorff space. The \tdef{$\basering$-homology manifold locus} of $X$, denoted $\mdef{\hreg X[\basering ]}\subseteq X$, is the (open) subset of points $x\in X$ such that the restriction $\dualizingshf X\vert_U$ is invertible for some open neighborhood $U$ of $x$.
\end{defn}
\begin{rmk}
  Note that by~\cite[Cor~2.5.4.12]{Martini_Wolf2022}, a point $x\in X$ belongs to the $\basering$-homology manifold locus of $X$ iff $\dualizingshf X[x]$ is invertible and $\dualizingshf X$ is locally constant in a neighborhood of $x$.
\end{rmk}
The homology manifold locus is especially well-behaved for topological spaces that admit triangulations.
\begin{prop}
  Let $X$ be a locally compact triangularizable space. Then $\hreg X[\basering]$ is nonempty and its complement $X\setminus\hreg X[\basering]$ is a subcomplex of $X$ for any choice of triangulation. 
\end{prop}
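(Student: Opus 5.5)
Fix a triangulation $(\Sigma,f)$ of $X$; local compactness forces $\Sigma$ to be locally finite. The plan is to compute the dualizing sheaf $\dualizingshf X$ locally on open stars. The key input is that $\dualizingshf X$ is constructible with respect to the stratification of $X$ by open simplices. For a point $x$ in the open simplex $\sigma^\circ$, the costalk is computed by the link:
\[
  x^!\basering_X\simeq \basering\otimes\tilde{\mathrm C}_\bullet(\operatorname{lk}\sigma)\lbrack\dim\sigma+1\rbrack \quad\text{(up to the usual shift conventions)},
\]
and the stalk of $\dualizingshf X$ at $x$ is the local homology $\basering\otimes \mathrm C_\bullet(X,X\setminus x)$ in homological grading. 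The same description holds for $\operatorname{lk}\sigma$ and the star of $\sigma$. Consequently the restriction of $\dualizingshf X$ to $\sigma^\circ$ is constant, and its stalk depends only on $\sigma$. We record that $x\in\hreg X[\basering]$ iff $\dualizingshf X$ is invertible on some neighborhood of $x$, and may take that neighborhood to be the open star $\operatorname{st}(\sigma)$.

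\textbf{Subcomplex claim.} Let $Z=X\setminus\hreg X[\basering]$; we show that if $\sigma^\circ$ meets $Z$ then every face $\tau\le\sigma$ satisfies $\tau^\circ\subseteq Z$, and $\sigma^\circ\subseteq Z$. Constancy along open simplices gives that $Z$ is a union of open simplices, because $\hreg X[\basering]$ is open and membership depends only on the simplex. Closure under faces follows since $\operatorname{st}(\tau)\supseteq\sigma^\circ$ for $\tau\le\sigma$. If $x\in\tau^\circ$ lay in $\hreg X[\basering]$, then some neighborhood of $x$ would have invertible dualizing sheaf. That neighborhood can be shrunk to a small open set meeting $\sigma^\circ$, which would put points of $\sigma^\circ$ in the open set $\hreg X[\basering]$, a contradiction. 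A union of open simplices closed under taking faces is a subcomplex. This argument works for any triangulation, since the stratification argument is carried out for whichever triangulation is fixed.

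\textbf{Nonemptiness.} We may assume $X\neq\varnothing$. Pick a maximal simplex $\sigma$ of $\Sigma$, which exists by local finiteness since each vertex lies in finitely many simplices. Then $\operatorname{st}(\sigma)=\sigma^\circ$ is an open subset of $X$ homeomorphic to $\RR^n$ with $n=\dim\sigma$. Its dualizing sheaf is $\basering_{\RR^n}\lbrack n\rbrack$, which is invertible. Since $\dualizingshf X$ restricts along open embeddings as $j^*\dualizingshf X\simeq\dualizingshf U$, the whole of $\sigma^\circ$ lies in $\hreg X[\basering]$. The assumption that $\pi_0\basering$ is a PID is only used to make invertibility concrete, namely as being of the form $\basering\lbrack n\rbrack$, and does not otherwise enter.

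\textbf{Main obstacle.} The delicate step is justifying that $\dualizingshf X$ is constant along each open simplex. The approach is to note that $\operatorname{st}(\sigma)\cong\sigma^\circ\times \mathrm{cone}^\circ(\operatorname{lk}\sigma)$ is a product with a Euclidean factor. The dualizing sheaf of a product is the exterior product, so it becomes $\basering_{\RR^{\dim\sigma}}\lbrack\dim\sigma\rbrack\boxtimes\dualizingshf{\mathrm{cone}^\circ}$. Restricted to $\sigma^\circ\times\{\text{cone point}\}$, this is visibly constant. This also makes the openness-shrinking argument in the subcomplex step rigorous, because every neighborhood of $x\in\tau^\circ$ contains points of $\sigma^\circ$ for each $\sigma\ge\tau$.
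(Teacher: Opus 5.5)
Your proof is correct and follows the paper's argument. Nonemptiness comes from the interior of a maximal simplex being an open copy of $\RR^n$, and the subcomplex claim comes from $\dualizingshf X$ being constant along open simplices together with closedness of $X\setminus\hreg X$. Your product decomposition $\operatorname{st}(\sigma)\cong\sigma^\circ\times\mathrm{cone}^\circ(\operatorname{lk}\sigma)$ usefully justifies a point the paper only asserts: that membership in $\hreg X$, and not merely the restriction of $\dualizingshf X$, is constant along each open simplex.

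One harmless slip: your displayed link formula computes the stalk of $\dualizingshf X$ at $x$ (local homology), not the costalk $x^!\basering_X$ (which is local cohomology). The argument never uses that formula, so nothing breaks.
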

\begin{proof}
  For non-emptiness, note that $\hreg X[\basering]$ contains the interior of any maximal simplex $\Delta\subseteq X$, since $\interior\Delta\cong\RR^d$ for some $d\geq 0$, whence $\dualizingshf X\vert_{\interior\Delta}\simeq\dualizingshf{\interior\Delta}\simeq\basering\lbrack d\rbrack$~\cite[Prop~6.21]{Volpe2025}.

  For the statement about $X\setminus\hreg X$, fix an arbitrary triangulation of $X$. Note that $\dualizingshf X[\basering]$ is constant on the relative interior of any simplex. Hence $\hreg X[\basering]$ and $X\setminus\hreg X[\basering]$ are both unions of relative interiors of simplices of $X$. Since $X\setminus\hreg X[\basering]$ is closed in $X$, it must be a subcomplex.
\end{proof}
\begin{rmk}
  For a pseudomanifold $X$, the homology manifold locus $\hreg X[\basering]$ is equal to the top stratum of the canonical filtrations introduced by Goresky and MacPherson in their proof of the topological invariance of intersection cohomology~\cite{Goresky_Macpherson1983}, see also~\cite{Rourke_Sanderson1999}. (Unlike the lower strata, the top stratum of these filtrations does not depend on the choice of perversity function.) Indeed, the top stratum of these filtrations is defined to be the set of points $x\in X$ such that $\dualizingshf X$ is locally constant in a neighborhood of $x$. This is equivalent to the condition that we impose, since any $x\in X$ is contained in a maximal simplex $\Delta$, and on the interior $\interior\Delta$ we have $\dualizingshf X\vert_{\interior\Delta}\simeq\basering\lbrack\dim\Delta\rbrack$ by~\cite[Prop~6.21]{Volpe2025}; thus if $\dualizingshf X$ is locally constant near $x$, we must also have $\dualizingshf X[x]\simeq\basering\lbrack\dim\Delta\rbrack$.
\end{rmk}
\begin{exam}\label{exam:hreg-not-stalkwise-condition}
  Note that $\hreg X$ is (typically) smaller than the set of points $x\in X$ having $\dualizingshf X[x]$ invertible. Indeed, the latter is generally neither an $\basering$-homology manifold nor an open subset of $X$ (although it is a locally closed subset since $\dualizingshf X$ is constant on relative interiors of simplices).

  As an example, let $S^1\vee I$ denote the lollipop graph formed by attaching an interval $I = \lbrack 0,1\rbrack$ along one of its endpoints to the circle. Consider the cone $X = C(S^1\vee I)$, and put
  $X ' = \lbrace x\in X\mid\dualizingshf X[x]\text{ is invertible}\rbrace$.
  \begin{figure}[h]
    \centering
    \def\svgwidth{0.8\textwidth}
\begingroup%
  \makeatletter%
  \providecommand\color[2][]{%
    \errmessage{(Inkscape) Color is used for the text in Inkscape, but the package 'color.sty' is not loaded}%
    \renewcommand\color[2][]{}%
  }%
  \providecommand\transparent[1]{%
    \errmessage{(Inkscape) Transparency is used (non-zero) for the text in Inkscape, but the package 'transparent.sty' is not loaded}%
    \renewcommand\transparent[1]{}%
  }%
  \providecommand\rotatebox[2]{#2}%
  \newcommand*\fsize{\dimexpr\f@size pt\relax}%
  \newcommand*\lineheight[1]{\fontsize{\fsize}{#1\fsize}\selectfont}%
  \ifx\svgwidth\undefined%
    \setlength{\unitlength}{283.46456693bp}%
    \ifx\svgscale\undefined%
      \relax%
    \else%
      \setlength{\unitlength}{\unitlength * \real{\svgscale}}%
    \fi%
  \else%
    \setlength{\unitlength}{\svgwidth}%
  \fi%
  \global\let\svgwidth\undefined%
  \global\let\svgscale\undefined%
  \makeatother%
  \begin{picture}(1,0.186)%
    \lineheight{1}%
    \setlength\tabcolsep{0pt}%
    \put(0,0){\includegraphics[width=\unitlength,page=1]{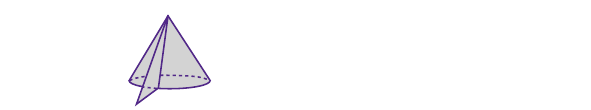}}%
    \put(0.27167336,0.16933502){\color[rgb]{0,0,0}\makebox(0,0)[lt]{\lineheight{1.25}\smash{\begin{tabular}[t]{l}$\infty$\end{tabular}}}}%
    \put(0.27215951,0.01129738){\color[rgb]{0,0,0}\makebox(0,0)[lt]{\lineheight{1.25}\smash{\begin{tabular}[t]{l}$\pt$\end{tabular}}}}%
    \put(0.70033341,0.01130119){\color[rgb]{0,0,0}\makebox(0,0)[lt]{\lineheight{1.25}\smash{\begin{tabular}[t]{l}$\pt$\end{tabular}}}}%
    \put(0,0){\includegraphics[width=\unitlength,page=2]{cone_regular_locus.pdf}}%
    \put(0.70005792,0.16933746){\color[rgb]{0,0,0}\makebox(0,0)[lt]{\lineheight{1.25}\smash{\begin{tabular}[t]{l}$\infty$\end{tabular}}}}%
    \put(0,0){\includegraphics[width=\unitlength,page=3]{cone_regular_locus.pdf}}%
  \end{picture}%
\endgroup%

    \caption{The complements of $X'$ (left) and $\hreg X$ (right) as subsets (highlighted in purple) of the 1-skeleton of $X$, with $0$-cells depicted as fattened points}
    \label{fig:cone-regular-locus}
  \end{figure}
  For $\basering\neq 0$, the $\basering$-homology manifold locus $\hreg X[\basering]$ is equal to the complement of the cone base $S^1\vee I$ and the 1-cells $C(\partial I)$, see~\cref{fig:cone-regular-locus}. In particular, $\hreg X[\basering]$ does not contain the cone point $\infty$. On the other hand, $\dualizingshf X[\infty]\simeq\widetilde{\mathrm C}_*(\Sigma (S^1\vee I);\basering )\simeq\basering\lbrack 2\rbrack$, and indeed $X' = \hreg X[\basering ]\cup\lbrace\infty\rbrace$.

  $X$ is also an example of a space for which the recursively defined filtration
  \begin{equation}
    \label{eq:recursively-defined-filtration}
    X\supset X\setminus\hreg X[\basering]\supset (X\setminus\hreg X[\basering])\setminus \hreg{(X\setminus\hreg X[\basering])}[\basering]\supset\cdots
  \end{equation}
  is not a $\mathrm H$-filtration in the sense of \cite{Rourke_Sanderson1999}; that is, it fails to verify the ``equisingularity'' condition, which asks that the restriction of the dualizing sheaf on the $n$-skeleton to the $i$-stratum is locally constant for $i\leq n$. Indeed, letting $x_0\in S^1\vee I$ denote the attaching point, we find that~\labelcref{eq:recursively-defined-filtration} is given by
  \[
    X\supset (S^1\vee I)\cup C(\partial I)\supset\lbrace x_0\rbrace.
  \]
  In particular, the cone point $\infty$ belongs to the 1-stratum $Y_1 = ((S^1\vee I)\cup C(\partial I))\setminus\lbrace x_0\rbrace$. But $\dualizingshf X\vert_{Y_1}$ is not locally constant near $\infty$. On $C(x_0)\setminus\lbrace\infty\rbrace$, the sheaf $\dualizingshf X$ restricts to the constant sheaf with value $\basering\lbrack 2\rbrack\oplus \basering\lbrack 2\rbrack$, whereas on the other leg $C(\partial I)\setminus C(x_0)$ it restricts to the zero sheaf, and at $\infty$ we have $\dualizingshf X[\infty]\simeq\basering \lbrack 2\rbrack$ as we saw above. 
\end{exam}
The following description will be useful later:
\begin{lem}\label{lem:characterization-of-homological-regular-locus}
  Let $X$ be a locally compact triangularizable space. A point $x\in X$ belongs to $\hreg X$ if and only if there is some $d\geq 0$ such that for every simplex $\Delta\subseteq X$ containing $x$, there is an equivalence $\dualizingshf X\vert_{\relint\Delta}\simeq \basering\lbrack d\rbrack_{\relint\Delta}$. 
\end{lem}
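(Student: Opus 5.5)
We prove the two implications separately, fixing a triangulation of $X$ throughout. We use two facts recalled in the proof of the preceding proposition: $\dualizingshf X$ is constant on the relative interior of each simplex, and the closed star $\operatorname{St}(x)$ (the union of the simplices containing $x$) has interior the open star, which is an open neighbourhood of $x$.

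For the ``only if'' direction, suppose $x\in\hreg X$. Then there is an open neighbourhood $U$ of $x$ with $\dualizingshf X\vert_U$ invertible. Since $\pi_0\basering$ is a PID, $\dualizingshf X\vert_U$ is locally of the form $\basering\lbrack n\rbrack$. After shrinking $U$ to a connected neighbourhood, such as the open star of $x$ (connected, being a cone-like union of relative interiors through $x$), local constancy forces a single shift $d$ on $U$.
\begin{itemize}
\item Every simplex $\Delta$ containing $x$ satisfies $\relint\Delta\cap U\neq\varnothing$, because $x$ lies in the closure of $\relint\Delta$.
\item As $\dualizingshf X\vert_{\relint\Delta}$ is constant, its value equals a stalk at a point of $\relint\Delta\cap U$, which is $\basering\lbrack d\rbrack$.
\item Hence $\dualizingshf X\vert_{\relint\Delta}\simeq\basering\lbrack d\rbrack_{\relint\Delta}$.
\end{itemize}

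For the ``if'' direction, assume the condition holds with some $d$. Let $V$ be the open star of $x$, which is the union of $\relint\Delta$ over all simplices $\Delta\ni x$. We must show $\dualizingshf X\vert_V$ is invertible. Each stratum restriction is $\basering\lbrack d\rbrack$, but a sheaf can be constant on each stratum without being locally constant: the gluing maps between strata might fail to be equivalences. The key step is therefore to show that, for simplices $\sigma\subseteq\tau$ both containing $x$, the specialization map
\[
\dualizingshf X[y]\longrightarrow\dualizingshf X[z],\qquad y\in\relint\sigma,\; z\in\relint\tau,
\]
is an equivalence. Once this holds for all such pairs, the constructible sheaf $\dualizingshf X\vert_V$ is locally constant: constructible sheaves on the poset of strata with invertible specialization maps are locally constant, since the exit-path category of the star is contractible. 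With stalks $\basering\lbrack d\rbrack$, it is then invertible.

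To prove the specialization maps are equivalences, we would use the local description of the dualizing sheaf. At $y\in\relint\sigma$ we have
\[
\dualizingshf X[y]\simeq \mathrm C_\bullet(X,X\setminus y;\basering),
\]
and the specialization map is induced by the inclusion of pairs $(X,X\setminus y)\to(X,X\setminus z)$ for $z$ near $y$. It is computed through the neighbourhood $\operatorname{St}(y)$, i.e. through the link of $\sigma$ and the link of $\tau$ (a join factor of the former).

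This is the main obstacle. Stalkwise invertibility alone is not enough, as \cref{exam:hreg-not-stalkwise-condition} shows. What we can exploit is that the hypothesis covers \emph{all} simplices containing $x$, including every top-dimensional one, so $d$ must be the dimension of each maximal simplex containing $x$. The first attempt would be to run a downward induction on $\dim\sigma$ using the cofiber sequence of local homology for the pair of strata, showing the map $\basering\lbrack d\rbrack\to\basering\lbrack d\rbrack$ is nonzero on $\pi_d$. Such a nonzero map need not be a unit, so an extra argument would be needed. For this we would appeal to \cite[Cor~2.5.4.12]{Martini_Wolf2022}, reducing to local constancy near $x$, and check it by the Mayer--Vietoris/link computation.
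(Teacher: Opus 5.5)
\textbf{There is a genuine gap in the ``if'' direction.} Your ``only if'' direction is fine; the paper regards it as clear. You also correctly isolate what the ``if'' direction really requires. On the open star $V$ of $x$, every stratum restriction of $\omega_X$ is $R[d]$, and one must still show that $\omega_X|_V$ is locally constant, i.e.\ that the specialization maps between strata are equivalences. That is never proved.

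\begin{itemize}
\item As you note yourself, your downward induction only shows that the specialization map $R[d]\to R[d]$ is nonzero on $\pi_d$. Over $\ZZ$ this does not rule out, say, multiplication by $2$.
\item The fallback to \cite[Cor~2.5.4.12]{Martini_Wolf2022} is circular. That result says $x\in\hreg X$ iff the stalk of $\omega_X$ at $x$ is invertible \emph{and} $\omega_X$ is locally constant near $x$. So it reduces the lemma to exactly the local constancy you are trying to establish.
\item The ``Mayer--Vietoris/link computation'' that is supposed to supply this is never specified.
\end{itemize}
As written, the hard direction is reduced to an unproven claim.

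\textbf{How the paper closes this step.} The hypothesis gives an open neighbourhood $U$ of $x$ on which every stalk of $\omega_X$ is $R[d]$. The paper then invokes \cite[Prop~7.9]{Scholze2026} and \cite[Thm~4.15]{Land_Volpe2026}, which turn stalkwise invertibility with a fixed shift on an open set into local constancy there.

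\textbf{How you could close it combinatorially.} If you want to keep your self-contained route, the missing ``extra argument'' is an induction on dimension through links.
\begin{enumerate}
\item For $y\in\relint\sigma$ one has $\omega_{X,y}\simeq\widetilde{\mathrm C}_\bullet(\mathrm{Lk}\,\sigma;R)[\dim\sigma+1]$.
\item For $\tau=\sigma*\tau'$, the specialization map to a point of $\relint\tau$ is, up to this shift, the map $\widetilde{\mathrm C}_\bullet(L;R)\to\mathrm C_\bullet(L,L\setminus w;R)$, where $L=\mathrm{Lk}\,\sigma$ and $w\in\relint\tau'$. Here $\mathrm{Lk}\,\tau=\mathrm{Lk}_L(\tau')$ is a link inside $L$, not a join factor of $L$.
\item Links of simplices of $L$ are links in $X$ of simplices containing $x$. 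So your hypothesis says every link in $L$ is an $R$-homology sphere of the expected dimension, and $L$ itself is an $R$-homology $(d-\dim\sigma-1)$-sphere.
\item By induction on dimension, $L$ is a closed $R$-homology manifold. Being a homology sphere, it is $R$-orientable, with a fundamental class restricting to a generator of each local homology group.
\item Hence the specialization map is an equivalence.
\end{enumerate}
This is what upgrades ``nonzero'' to ``unit''.
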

\begin{proof}
  Let $X '\subseteq X$ denote the subset consisting of points satisfying the conditions given in the lemma. Clearly $\hreg X\subseteq X'$.
  For the reverse inclusion, note that the conditions guarantee that $x\in X'$ has an open neighborhood $U\ni x'$ such that $\dualizingshf X[y]\simeq \basering\lbrack d\rbrack$ for all $y\in U$. As in~\cites[][Prop~7.9]{Scholze2026}[][Thm~4.15]{Land_Volpe2026}, we find that $\dualizingshf X$ is locally constant on $U$, completing the proof.
\end{proof}
\subsection{Recollection on Serre functors}\label{subsec:serre-functors}
We wish to define the Gorenstein locus of a dualizable category as the subcategory consisting of objects that behave as if they are ``supported on homologically smooth points.'' In~\cref{subsec:gorenstein-locus} we will make this idea precise using the notion of a Serre functor, briefly reviewed here.
\begin{defn}
  Let $\basering$ be an $E_\infty$-ring, and let $\mathcal D$ be a dualizable $\basering$-linear stable category. The \tdef{Serre functor} of $\mathcal D$, denoted
  \[
    \mdef{S_{\mathcal D}}\colon\mathcal D\longrightarrow\mathcal D,
  \]
  is defined as the composite
  \begin{equation}
    \label{eq:serre-functor-formula}
    \mathcal D\simeq\mathcal D\otimes_{\basering}\modules[\basering]\xrightarrow{\id\otimes\ev{}^R}
    \mathcal D\otimes_{\basering} \mathcal D^\vee\otimes_\basering\mathcal D\simeq\mathcal D^\vee\otimes_\basering\mathcal D\otimes_\basering\mathcal D\xrightarrow{\ev{}\otimes\id}\modules[\basering]\otimes\mathcal D\simeq\mathcal D,
  \end{equation}
  where $\ev{}\colon\mathcal D^\vee\otimes_\basering\mathcal D\to\modules[\basering]$ denotes the (essentially unique) evaluation pairing for $\mathcal D$, and $\ev{}^R$ denotes its $\basering$-linear right adjoint.
\end{defn}
Alternatively, we can write the formula for the Serre functor using the following string diagram:
\begin{equation*}
  \begin{tikzpicture}[
    baseline = (current bounding box.center)
    line width=10pt,
    box/.style={
      draw,
      rectangle,
      minimum width=0.8cm,
      minimum height=0.8cm,
      inner sep=0pt
    }
    ]

    \node[box, thick] (evR) at (2,0.5) {$\ev{}^R$};
    \node[box, thick] (ev)  at (7,2) {$\ev{}$};

    \path (evR.east) ++(0, 0.22) coordinate (evRoutTop);
    \path (evR.east) ++(0,-0.22) coordinate (evRoutBot);

    \path (ev.west) ++(0, 0.22) coordinate (evInTop);
    \path (ev.west) ++(0,-0.22) coordinate (evInBot);

    \draw[thick] (-0.5,2.55) -- (4.1,2.55) node [very near start,yshift=8pt]{$\mathcal D$}
    .. controls (4.9,2.55) and (5,1.7) ..
    (evInBot) ;

    \draw [thick] (evRoutTop) to[out=20,in=180] (evInTop);

    \draw [thick] (evRoutBot) -- (9.5,0.22) node [very near end,xshift=4pt,yshift=8pt]{$\mathcal D$};

    \node at (3.45,1.5) {$\mathcal D^\vee$};
  \end{tikzpicture}
\end{equation*}
\smallskip

Serre functors were originally defined by Bondal and Kapranov in the context of small triangulated categories~\cite{Bondal_Kapranov1989}. The key ingredient in Bondal and Orlov's reconstruction theorem for smooth (anti-)Fano varieties~\cite{Bondal_Orlov2001} is the observation that the Serre functor on the derived category of coherent sheaves is given by tensoring with the dualizing sheaf. A similar description holds for the derived category of sheaves on a locally compact Hausdorff space:
\begin{prop}
  Let $X$ be a locally compact Hausdorff space and let $\basering$ be an $E_\infty$-ring. The Serre functor on $\Shv X[\modules[\basering]]$ is given by
  \[
    \dualizingshf X\otimes ({-})\colon\Shv X[\modules[R]]\too \Shv X[\modules[R]].
  \]
\end{prop}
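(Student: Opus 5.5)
We will identify each piece of the composite~\labelcref{eq:serre-functor-formula} for $\mathcal D = \Shv X[\modules[\basering]]$ using the self-duality of sheaf categories on locally compact Hausdorff spaces. First, recall the standard duality data. There is an equivalence $\Shv X[\modules[\basering]]^\vee\simeq\Shv X[\modules[\basering]]$ together with $\Shv X\otimes_\basering\Shv X\simeq\Shv{X\times X}$ via external product. Under these identifications, the evaluation is $\ev{}\simeq p_!\Delta^*\colon\Shv{X\times X}\to\modules[\basering]$ and the coevaluation is $\coev{}(\basering)\simeq\Delta_*\basering_X = \Delta_!\basering_X$. This is Lurie's Verdier duality (e.g.~as recorded in~\cite{Krause_Nikolaus_Putzstuck2024} or~\cite{Volpe2025}); we take it as known. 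Taking right adjoints, $\ev{}^R\simeq\Delta_*p^!\simeq\Delta_*(-\otimes\dualizingshf X)$ on $\modules[\basering]$, so that $\ev{}^R(\basering)\simeq\Delta_*\dualizingshf X$.

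Next we translate the string diagram into six-functor language. Write $X^3 = X\times X\times X$ with projections $\pi_{ij}$ and $\pi_i$. For $F\in\Shv X$, the object $(\id\otimes\ev{}^R)(F)$ is $F\boxtimes\Delta_*\dualizingshf X = (\id\times\Delta)_*(F\boxtimes\dualizingshf X)\in\Shv{X^3}$. We use here that external product with $F$ commutes with $\Delta_*$; this holds because $\Delta$ is proper, so $\Delta_*=\Delta_!$, and $!$-pushforward satisfies the Künneth formula. Applying $\ev{}\otimes\id$ then amounts to $\pi_{3!}(\Delta_{12}^*(-))$, where $\Delta_{12}\colon X\times X\to X^3$, $(a,b)\mapsto(a,a,b)$. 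The resulting integral kernel on $X\times X$ is
\[
\Delta_{12}^*(\id\times\Delta)_*(F\boxtimes\dualizingshf X).
\]
By proper base change along the pullback square formed by $\id\times\Delta$ and $\Delta_{12}$, whose fiber product is the small diagonal $X\to X^3$, this kernel is $\Delta_*(F\otimes\dualizingshf X)$. Pushing forward along the second projection then gives $F\otimes\dualizingshf X$, because the second projection composed with $\Delta$ is the identity.

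Finally, we must check that these identifications are natural in $F$ and linear, so that the composite is equivalent to the functor $\dualizingshf X\otimes(-)$ rather than merely agreeing objectwise. Every step is a composite of canonical base-change and projection-formula equivalences, so naturality is automatic.

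The main obstacle is the first step: pinning down $\ev{}$ precisely, including which of the two factors carries the dual and in which orientation. We would verify it by checking the zigzag identity. The composite $F\mapsto\pi_{2!}(\Delta^*(F\boxtimes\basering)\otimes\ldots)$ built from $\ev{}$ and $\coev{}$ reduces by base change to $F$. Once $\ev{}$ and $\coev{}$ are confirmed to be the $p_!\Delta^*$ and $\Delta_!$ pairing, the right-adjoint computation $\ev{}^R\simeq\Delta_*p^!$ follows from $(p_!)^R = p^!$ and $(\Delta^*)^R=\Delta_*$.
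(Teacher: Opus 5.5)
Your proposal is correct and follows essentially the paper's argument. You identify $\ev{}\simeq p_!\Delta^*$ via covariant Verdier duality, so that the kernel of the Serre functor is $\ev{}^R(\basering)\simeq\Delta_*p^!\basering=\Delta_*\dualizingshf X$, and then unfold the composite. The only cosmetic difference is that you simplify $\Delta_{12}^*(F\boxtimes\Delta_*\dualizingshf X)$ by proper base change on $X^3$, whereas the paper rewrites it as $\Delta_*\dualizingshf X\otimes p_1^*F$ and applies the projection formula.

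One small caveat: your aside $p^!\simeq p^*({-})\otimes\dualizingshf X$ on all of $\modules[\basering]$ can fail for non-compact modules (e.g.\ on a Cantor set). It is never used, though, since only $\ev{}^R(\basering)$ enters the formula.
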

\begin{proof}
  By covariant Verdier duality, the evaluation pairing for $\Shv X[\modules[\basering]]$ can be described as
  \[
    \Shv X[\modules[\basering]]\otimes_\basering\Shv X[\modules[\basering]]\xrightarrow\sim
    \Shv{X\times X}[\modules[\basering]]\xrightarrow{\Delta^*}
    \Shv X[\modules[\basering]]\xrightarrow{p_!}
    \modules[\basering],
  \]
  where $\Delta\colon X\to X\times X$ is the diagonal embedding and $p\colon X\to\pt$ is the projection to a point. We thus find that the right adjoint $\ev{}^R$ is given by the composition
  \[
    \modules[\basering]\xrightarrow{p^!}
    \Shv X[\modules[\basering]]\xrightarrow{\Delta_*}
    \Shv{X\times X}[\modules[\basering]]\xrightarrow\sim
    \Shv X[\modules[\basering]]\otimes_\basering\Shv X[\modules[\basering]].
  \]
  Unfolding definitions, we then find that the Serre functor is given by
  \begin{equation}
    \label{eq:serre-functor-on-sheaves-intermediate-step}
    (p_2)_!(\Delta_*\dualizingshf X\otimes p_1^*({-})),
  \end{equation}
  where $p_1$ and $p_2\colon X\times X\to X$ denote the projection onto the first and second factor, respectively. Applying the projection formula to~\labelcref{eq:serre-functor-on-sheaves-intermediate-step} and using that $p_1\Delta = p_2\Delta = \id$ now produces the desired formula.
\end{proof}
\subsection{The Gorenstein locus of a dualizable category}\label{subsec:gorenstein-locus}
Let $\basefield$ be a field.
\begin{defn}\label{defn:regular-locus}
  Let $\mathcal D$ be a dualizable $\basefield$-linear stable $\infty$-category, and let $S_{\mathcal D}\curvearrowright\mathcal D$ denote its Serre functor. An object $x\in\mathcal D$ is \tdef{regular} if the map
  \[
    \Hom{x}{y}[\mathcal D]\xtoo{S_{\mathcal D}}\Hom{Sx}{Sy}[\mathcal D]
  \]
  is an equivalence for all $y\in\mathcal D$. We refer to the full subcategory of $\mathcal D$ spanned by regular objects as the \tdef{Gorenstein locus} of $\mathcal D$, and denote it by $\mdef{\reg{\mathcal D}}\subseteq\mathcal D$.
\end{defn}
Intuitively, the Gorenstein locus $\reg{\mathcal D}$ consists of objects $F$ such that the Serre functor $S_{\mathcal D}$ is invertible ``near the support'' of $F$. Our goal in this section is to turn this intuition into a precise description of the Gorenstein locus of the derived category of sheaves on a space $X$, assuming that $X$ admits the structure of a locally finite simplicial complex; we will see that an object of this derived category is regular iff its support is contained in the $\basefield$-homology manifold locus. We do not know whether this description remains true if one drops the assumption that $X$ admits a triangulation, but it is always true that a sheaf supported on the homology manifold locus is regular, as the first proposition of this subsection shows. For the remainder of this subsection, we abbreviate $\hreg X = \hreg X[\basefield]$.
\begin{prop}\label{prop:supported-on-gorenstein-implies-regular}
  Let $X$ be a locally compact Hausdorff space, and let $j\colon\hreg X\hookrightarrow X$ denote the inclusion of its $\basefield$-homology manifold locus. If $F\in\Shv{\hreg X}[\mathrm D(\basefield )]$, then $j_!F$ belongs to the Gorenstein locus of $\Shv{X}[\mathrm D(\basefield )]$.
\end{prop}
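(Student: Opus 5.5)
We must show that for every $G\in\Shv X[\mathrm D(\basefield)]$ the map
\[
  \Hom{j_!F}{G}[X]\longrightarrow\Hom{\dualizingshf X\otimes j_!F}{\dualizingshf X\otimes G}[X]
\]
induced by the Serre functor $\dualizingshf X\otimes({-})$ is an equivalence. The plan is to reduce both sides to $\hreg X$ and exploit invertibility of $\dualizingshf X$ there.

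\textbf{Step 1: reduce the source to $\hreg X$.} By the projection formula, $\dualizingshf X\otimes j_!F\simeq j_!(j^*\dualizingshf X\otimes F)$. Since $\hreg X$ is open, $j^*\dualizingshf X\simeq\dualizingshf{\hreg X}$, and we write $L$ for this sheaf. By definition of $\hreg X$, the sheaf $L$ is locally invertible, hence invertible as an object of $\Shv{\hreg X}[\mathrm D(\basefield)]$: invertibility is local, and the inverse $\shHom{L}{\basefield}[\hreg X]$ is computed locally.

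\textbf{Step 2: use the $j_!\dashv j^*$ adjunction on both sides.} We have
\[
  \Hom{j_!F}{G}\simeq\Hom{F}{j^*G}[\hreg X]
  \quad\text{and}\quad
  \Hom{j_!(L\otimes F)}{\dualizingshf X\otimes G}\simeq\Hom{L\otimes F}{L\otimes j^*G}[\hreg X],
\]
using $j^*(\dualizingshf X\otimes G)\simeq L\otimes j^*G$, since $j^*$ is symmetric monoidal. Since $L$ is invertible, $L\otimes({-})$ is an autoequivalence of $\Shv{\hreg X}[\mathrm D(\basefield)]$. So the right-hand side is identified with $\Hom{F}{j^*G}[\hreg X]$ via the map induced by $L\otimes({-})$.

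\textbf{Step 3: compatibility of identifications.} This is the main obstacle, though it is bookkeeping. We must check that, under the identifications of Step 2, the map induced by $S=\dualizingshf X\otimes({-})$ corresponds to the map induced by $L\otimes({-})$ on $\hreg X$. This amounts to the naturality of the projection-formula equivalence $\dualizingshf X\otimes j_!({-})\simeq j_!(L\otimes{-})$, together with its compatibility with the adjunction units and counits. Concretely:
\begin{itemize}
\item A map $\phi\colon j_!F\to G$ corresponds to its adjoint $\phi^\flat\colon F\to j^*G$.
\item $\dualizingshf X\otimes\phi$ then has adjoint $L\otimes\phi^\flat$ composed with the equivalence $j^*(\dualizingshf X\otimes G)\simeq L\otimes j^*G$.
\end{itemize}
This holds because $j^*$ is symmetric monoidal and the projection formula map is adjoint to the monoidal structure map of $j^*$. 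Granting this, the map in question is an equivalence for all $G$, so $j_!F$ is regular.
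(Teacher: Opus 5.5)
Your proof is correct and follows essentially the same route as the paper: the paper also uses the projection formula and the $j_!\dashv j^!\simeq j^*$ adjunction to build a commutative square identifying the Serre map for $j_!F$ with tensoring by $j^*\omega_X$ on $\hreg X$, which is an equivalence because $j^*\omega_X$ is invertible there. Your Step 3 just spells out the commutativity of that square, which the paper asserts without further detail.
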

\begin{proof}
  Let $G\in\Shv X$ be arbitrary. Using the projection formula and the $j_!\dashv j^!\simeq j^*$ adjunction, we get a commutative diagram
  \[
    \begin{tikzcd}
      \Hom{j_!F}{G}[\Shv X]\arrow[r,"\dualizingshf X\otimes {-}"]\arrow[d,"\simeq"]
      & \Hom{\dualizingshf X\otimes j_!F}{\dualizingshf X \otimes G}[\Shv X]\arrow[d,"\simeq"] \\
      \Hom{F}{j^*G}[\Shv{\hreg X}]\arrow[r,"j^*\dualizingshf X\otimes {-}"]
      & \Hom{j^*\dualizingshf X\otimes F}{j^*\dualizingshf X\otimes j^*G}.
    \end{tikzcd}
  \]
  By construction the sheaf $j^*\dualizingshf X$ is locally constant with invertible stalks and hence an invertible object of $\Shv{\hreg X}[\mathrm D(\basefield )]$, whence the bottom horizontal arrow is an equivalence. It follows that the top horizontal arrow is an equivalence too.
\end{proof}
In the opposite direction, we have the following partial result:
\begin{prop}\label{prop:invertible-stalks}
  Let $X$ be a locally compact Hausdorff space. If $F\in\Shv X[\mathrm D(\basefield )]$ is regular, then $\dualizingshf X[x]$ is invertible for each $x\in\supp F$.
\end{prop}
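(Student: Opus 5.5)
The plan is to probe the regularity of $F$ with skyscraper sheaves at a point $x\in\supp F$. This reduces the statement to a question of pure linear algebra over $\basefield$ about the stalks $F_x$ and $W\coloneqq\dualizingshf X[x]$.

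\textbf{Step 1: reduce to the stalk at $x$.} Fix $x\in\supp F$ and write $i_x\colon\lbrace x\rbrace\hookrightarrow X$. Points are closed since $X$ is Hausdorff, so $i_{x,*}=i_{x,!}$, and $i_x^*\dashv i_{x,*}$ gives
\[
  \Hom{F}{i_{x,*}V}[\Shv X]\simeq\Hom{F_x}{V}[\mathrm D(\basefield )]\qquad\text{for every }V\in\mathrm D(\basefield ).
\]
The projection formula gives $\dualizingshf X\otimes i_{x,*}V\simeq i_{x,*}(W\otimes V)$, hence
\[
  \Hom{\dualizingshf X\otimes F}{\dualizingshf X\otimes i_{x,*}V}\simeq\Hom{W\otimes F_x}{W\otimes V}.
\]
Under these identifications, the map induced by $S=\dualizingshf X\otimes(-)$ should become the map $\Hom{F_x}{V}\to\Hom{W\otimes F_x}{W\otimes V}$ given by $W\otimes(-)$. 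This uses that $i_x^*$ is symmetric monoidal, together with naturality of the adjunction and of the projection formula. Checking this compatibility carefully is the one point needing care, and I expect it to be the main (if modest) obstacle. It should follow from the fact that the projection-formula equivalence is compatible with the unit and counit of $i_x^*\dashv i_{x,*}$.

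\textbf{Step 2: pass from $F_x$ to $\basefield$.} Regularity of $F$ now says that $W\otimes(-)\colon\Hom{F_x}{V}\to\Hom{W\otimes F_x}{W\otimes V}$ is an equivalence for all $V$. Since $\basefield$ is a field and $F_x\neq 0$, some shift $\basefield[m]$ is a retract of $F_x$. The map in question is natural in the first variable, and a retract of an equivalence is an equivalence, so the statement holds with $F_x$ replaced by $\basefield[m]$, and hence by $\basefield$. The conclusion is that $V\to\Hom{W}{W\otimes V}$ is an equivalence for every $V$.

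\textbf{Step 3: linear algebra.} Taking $V=\basefield$, the unit map $\basefield\to\End(W)$ is an equivalence.
\begin{itemize}
  \item If $W=0$, then $\End(W)=0\not\simeq\basefield$, which is impossible.
  \item Otherwise $W\simeq\bigoplus_n W_n[n]$ is a graded vector space. The total dimension of $\pi_*\End(W)$ is at least $(\sum_n\dim W_n)^2$, with infinite total dimension if $W$ has infinite total dimension.
\end{itemize}
So the total dimension of $\pi_*\End(W)$ equals $1$ only if $W$ has total dimension $1$, that is, $W\simeq\basefield[n]$ for some $n$. Thus $\dualizingshf X[x]$ is invertible, as claimed.
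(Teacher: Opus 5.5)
Your proposal is correct and follows the paper's proof: probe regularity with the skyscraper at $x$, use a retract $\basefield\lbrack m\rbrack$ of $F_x$ to reduce to the unit map $\basefield\to\operatorname{End}(W)$ being an equivalence, then conclude $W\simeq\basefield\lbrack n\rbrack$. The only differences are cosmetic. You use $x_*V$ for all $V$, though $V=\basefield$ suffices. In the last step you count the total dimension of $\pi_*\operatorname{End}(W)$, where the paper instead argues that a retract $\basefield\lbrack n\rbrack$ of $W$ gives a nonzero idempotent in $\pi_0\operatorname{End}(W)\simeq\basefield$, which must be $1$; both arguments work.
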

The proof will require a lemma:
\begin{lem}\label{lem:hom-equivalence-implies-invertible}
  Let $V,W\in\mathrm D(\basefield )$ with $V\neq 0$. If
  \[
    {-}\otimes W\colon\Hom{V}{\basefield}[\mathrm D(\basefield )]\too\Hom{V\otimes W}{W}[\mathrm D(\basefield )]
  \]
  is an equivalence, then $W$ is invertible. 
\end{lem}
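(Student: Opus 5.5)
The idea is to rewrite the map in the statement as post-composition with the unit map $\eta\colon\basefield\to\operatorname{End}(W)$, where $\operatorname{End}(W)=\Hom{W}{W}[\mathrm D(\basefield )]$ is the internal endomorphism object. Since $V\neq 0$, the hypothesis then forces $\eta$ to be an equivalence. A direct computation over a field shows that this only happens when $W\simeq\basefield[n]$.

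First, the tensor--hom adjunction in $\mathrm D(\basefield )$ gives an equivalence
\[
  \Hom{V\otimes W}{W}[\mathrm D(\basefield )]\simeq\Hom{V}{\operatorname{End}(W)}[\mathrm D(\basefield )].
\]
Under this identification, the map ${-}\otimes W$ becomes $\eta_*=\Hom{V}{\eta}[\mathrm D(\basefield )]$. Here $\eta$ is adjoint to the identity $\basefield\otimes W\simeq W$, i.e.\ it is the unit of the endomorphism algebra. To see this, trace $\id_V\mapsto\id_{V\otimes W}$, or more generally $f\mapsto f\otimes\id_W$, through the adjunction. I expect this bookkeeping to be the only mildly fiddly point, and it is routine.

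Second, I use that $\basefield$ is a field. Every object of $\mathrm D(\basefield )$ splits as a direct sum of shifts of $\basefield$, so $V\simeq\bigoplus_{i\in I}\basefield[n_i]$ with $I\neq\varnothing$. Then
\[
  \Hom{V}{M}[\mathrm D(\basefield )]\simeq\prod_{i\in I}M[-n_i]
\]
naturally in $M$. Consequently $\eta_*$ is the product of the maps $\eta[-n_i]$. Fix some $i_0\in I$. The map $\eta[-n_{i_0}]$ is a retract of $\eta_*$ in the arrow category, and retracts of equivalences are equivalences. Hence $\eta\colon\basefield\to\operatorname{End}(W)$ is an equivalence.

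Third, write $W\simeq\bigoplus_{j\in J}\basefield[m_j]$. If $J=\varnothing$, then $\operatorname{End}(W)=0\not\simeq\basefield$, a contradiction. If $J$ has at least two elements, then
\[
  \pi_0\operatorname{End}(W)\simeq\prod_{j\in J}\pi_0\Hom{\basefield[m_j]}{W}[\mathrm D(\basefield )]
\]
contains $\prod_{j\in J}\basefield$, which has dimension at least $2$. This also contradicts $\pi_0\operatorname{End}(W)\simeq\basefield$. So $|J|=1$, i.e.\ $W\simeq\basefield[m]$, which is invertible.
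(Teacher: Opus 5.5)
Your proof is correct and follows essentially the same route as the paper's: a shift of $\basefield$ that is a retract of $V$ reduces the hypothesis to the unit map $\basefield\to\operatorname{End}(W)$ being an equivalence, and then the field structure forces $W$ to be a single shift of $\basefield$. The only difference is cosmetic: you rule out $|J|\geq 2$ by a dimension count in $\pi_0\operatorname{End}(W)$, whereas the paper uses that $\basefield$ has no non-trivial idempotents.
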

\begin{proof}
  The collection
  \[
    \lbrace V'\mid {-}\otimes W\colon\Hom{V'}{\basefield}[\mathrm D(\basefield )]\longrightarrow\Hom{V'\otimes W}{W}[\mathrm D(\basefield )]\text{ is an equivalence}\rbrace
  \]
  is closed under retracts. Since $V\neq 0$ and $\basefield$ is a field, there is some $m\in\ZZ$ such that $\basefield\lbrack m\rbrack$ is a retract of $V$. We thus have that
  \[
    {-}\otimes W\colon\Hom{\basefield\lbrack m\rbrack}{\basefield}[\mathrm D(\basefield )]\longrightarrow\Hom{\basefield\lbrack m\rbrack\otimes W}{W}[\mathrm D(\basefield )]
  \]
  is an equivalence, so by shifting down we find that the unit map
  \begin{equation}
    \label{eq:unit-map-for-endo-algebra}
    \basefield\longrightarrow\Hom{W}{W}[\mathrm D(\basefield )]
  \end{equation}
  is an equivalence of algebras. In particular, we must have $W\neq 0$. But then there is $n\in\ZZ$ such that $\basefield\lbrack n\rbrack$ is a retract of $W$. Since~\labelcref{eq:unit-map-for-endo-algebra} is an equivalence and $\basefield$ has no non-trivial idempotents, we conclude that $W\simeq\basefield\lbrack n\rbrack$.
\end{proof}
\begin{proof}[Proof of~\cref{prop:invertible-stalks}]
  Consider the skyscraper sheaf $x_*\basefield$. We have a commutative diagram
  \begin{equation*}
    \begin{tikzcd}
      \Hom{F}{x_*\basefield }[\Shv X[\mathrm D(\basefield )]]\arrow[d,"\simeq"]\arrow[r,"S"]
      & \Hom{S(F)}{S(x_*\basefield)}[\Shv X[\mathrm D(\basefield )]]\arrow[d,"\simeq"]\\
      \Hom{F_x }{\basefield}[\mathrm D(\basefield )]\arrow{r}{\dualizingshf{X}[x] \otimes -}
      & \Hom{\dualizingshf{X}[x] \otimes F_x }{\dualizingshf{X}[x] }[\mathrm D(\basefield )].
    \end{tikzcd}
  \end{equation*}
  The claim now follows from~\cref{lem:hom-equivalence-implies-invertible} applied to the bottom horizontal map.
\end{proof}
\begin{thm}\label{thm:noncomm-regularity}
  Let $X$ be a locally compact triangularizable space, and let \[j\colon\hreg X\hookrightarrow X\] denote the inclusion of the homology manifold locus.

  The Gorenstein locus $\reg{\Shv X[\mathrm D(\basefield )]}$ is equal as a full subcategory of $\Shv X[\mathrm D(\basefield )]$ to the essential image of $j_!$.
\end{thm}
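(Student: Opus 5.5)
The inclusion of the essential image of $j_!$ into $\reg{\Shv X[\mathrm D(\basefield )]}$ is exactly \cref{prop:supported-on-gorenstein-implies-regular}. So everything rests on the converse: if $F$ is regular, then $\supp F\subseteq\hreg X$. Given that, the counit $j_!j^*F\to F$ is an equivalence on stalks, so $F$ lies in the essential image of $j_!$. I would argue by contradiction. Suppose $x\in\supp F$ but $x\notin\hreg X$. Fix a triangulation and let $\tau$ be the simplex with $x\in\relint\tau$.

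By \cref{prop:invertible-stalks}, $\dualizingshf X[x]\simeq\basefield\lbrack d\rbrack$ for some $d$. Since $\dualizingshf X$ is constant on relative interiors of simplices, \cref{lem:characterization-of-homological-regular-locus} then produces a simplex $\sigma\supseteq\tau$ with $\dualizingshf X\vert_{\relint\sigma}\simeq\basefield\lbrack e\rbrack^{\oplus r}_{\relint\sigma}$ and $(r,e)\neq(1,d)$; possibly $r=0$ or $r\geq 2$. The aim is to find a test object $G$, concentrated on the closed star of $\tau$, for which the map
\[
\Hom{F}{G}\too\Hom{\dualizingshf X\otimes F}{\dualizingshf X\otimes G}
\]
fails to be an equivalence. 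My first choice is $G=\iota_*\basefield_{\relint\sigma}$, where $\iota\colon\relint\sigma\hookrightarrow X$ is the locally closed inclusion. Its variants $G=\iota_*M_{\relint\sigma}$ with $M$ ranging over $\mathrm D(\basefield)$ should give the extra flexibility to run the argument of \cref{lem:hom-equivalence-implies-invertible}.

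The computation uses the conic structure of the triangulation. The open star of $\tau$ is homeomorphic to $\relint\tau\times\mathring C(L)$ with $L$ the link, and $\relint\sigma$ corresponds to $\relint\tau\times(\text{open cone on a simplex of }L)$. Then I would carry out three steps.
\begin{enumerate}[label=(\arabic*)]
\item First I reduce to the open star $U$ of $\tau$. The sheaves $\iota_*(-)$ are supported in $\overline{\sigma}$, and after replacing $\relint\sigma$ by $\relint\sigma\cap U$ both Hom objects only see $F\vert_U$.
\item Next, using the conic structure, I identify $\Hom{F}{\iota_*M}$ with $\Hom{F_x}{M}$ up to the contractible factor $\relint\sigma\cap U$, which retracts onto $x$ compatibly with the cone structure. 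Concretely, $\Glob(\relint\sigma\cap U,\iota^*F)$ should be computed by the stalk at the cone point, because $F$ can be replaced by its restriction along the cone.
\item Likewise I identify the right-hand side with $\Hom{\dualizingshf X[x]\otimes F_x}{\basefield\lbrack e\rbrack^{\oplus r}\otimes M}$.
\end{enumerate}
Once these identifications hold, \cref{lem:hom-equivalence-implies-invertible}, applied to the map $\Hom{F_x}{M}\to\Hom{F_x\lbrack d\rbrack}{M\lbrack e\rbrack^{\oplus r}}$, forces $r=1$ and $e=d$. This is the desired contradiction.

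The main obstacle is making steps (2) and (3) honest. First, $F$ is an arbitrary (not constructible) sheaf, so $\Glob(\relint\sigma\cap U,\iota^*F)$ need not reduce to $F_x$. Second, $\dualizingshf X\otimes\iota_*M$ need not agree with $\iota_*(\iota^*\dualizingshf X\otimes M)$, since the projection formula fails for $*$-pushforward in general. To handle the first issue I would not use $\iota_*$ directly. Instead I would take $G$ to be a colimit of such objects over shrinking conical neighbourhoods of $x$, so that the left side becomes a limit computing a costalk of $F$ at $x$ along $\sigma$. If that is still too weak, I would additionally use skyscrapers $y_*\basefield$ for $y\in\relint\sigma$ near $x$, which handle points where $F$ itself is nonzero. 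For the second issue I would use that $\dualizingshf X$ is constructible with respect to the triangulation, which makes the projection formula for $\iota_*$ along strata available, and check it stalkwise on the cone.
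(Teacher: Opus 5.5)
\textbf{There is a genuine gap, located in steps (2) and (3).} These steps are not merely hard to make honest; they are false. Write $M_\sigma$ for the value of the constant sheaf $\omega_X|_{\operatorname{relint}\sigma}$. By adjunction, $\operatorname{Hom}(F,\iota_*M)\simeq\operatorname{Hom}(F|_{\operatorname{relint}\sigma},M_{\operatorname{relint}\sigma})$. This depends only on $F$ over $\operatorname{relint}\sigma$, and $x\notin\operatorname{relint}\sigma$ because $\sigma\neq\tau$.

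\begin{itemize}
\item No cone retraction turns this into $\operatorname{Hom}(F_x,M)$. If $F|_{\operatorname{relint}\sigma}\simeq 0$, the left side vanishes whatever $F_x$ is.
\item Your fallbacks do not help. Limits of such objects over shrinking neighbourhoods still only probe $F|_{\operatorname{relint}\sigma}$. Skyscrapers at $y\in\operatorname{relint}\sigma$ only reproduce \cref{prop:invertible-stalks} at $y$, i.e.\ invertibility of $M_\sigma$; they never compare $M_\sigma$ with $d$.
\item The map $\operatorname{Hom}(F_x,M)\to\operatorname{Hom}(F_x[d],M_\sigma\otimes M)$ that you want to feed into \cref{lem:hom-equivalence-implies-invertible} is not of the form $-\otimes W$. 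To define it at all you need gluing data of $\omega_X$ between $x$ and $\operatorname{relint}\sigma$, and your test objects never see this data.
\item Constructibility does not rescue the projection formula. We have $(\iota_*M)_x\simeq M$, so $(\omega_X\otimes\iota_*M)_x\simeq\omega_{X,x}\otimes M$. By contrast, $\iota_*(M_\sigma\otimes M)$ has stalk $M_\sigma\otimes M$ at $x$. These differ exactly in the case of interest.
\end{itemize}

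\textbf{The missing idea} is to test against objects that straddle $x$ and $\operatorname{relint}\sigma$ and are pushed forward along a proper map. The paper takes an embedded arc $a\colon[0,1]\to X$ with $a(0)=x$ and $a((0,1])\subseteq\operatorname{relint}\sigma$, and writes $e\colon\{0\}\hookrightarrow[0,1]$. Then $a_*=a_!$ satisfies the projection formula, and $\operatorname{Hom}(F,a_*G)\simeq\operatorname{Hom}(a^*F,G)$ does involve $F_x$. The argument then splits into two cases.

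\begin{itemize}
\item If $F|_{\operatorname{relint}\sigma}\simeq0$, then $a^*F\simeq e_*F_x$. Regularity against $a_!J$, with $J=j'_!k_{(0,1)}$, reduces to postcomposition with a natural map $\varphi\colon e^!J\otimes\omega_{X,x}\to e^!(J\otimes a^*\omega_X)$, i.e.\ a map $k[d-1]\to M_\sigma[-1]$. This must be an equivalence because $F_x\neq0$.
\item If $F|_{\operatorname{relint}\sigma}\not\simeq0$, \cref{prop:invertible-stalks} gives $M_\sigma\simeq k[d']$. Suppose $d'\neq d$. The attaching map of $a^*\omega_X$ at $0$ amounts to a map $k[d]\to k[d']$, hence is zero, so $a^*\omega_X$ splits. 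Regularity then forces $a^*F$ to split as $e_*F_x$ plus its part over $(0,1]$. Testing against $a_*k_{[0,1]}$, using $e^!k_{[0,1]}\simeq0$, yields $\operatorname{Hom}(F_x,k)\simeq0$, a contradiction.
\end{itemize}

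In both cases $M_\sigma\simeq k[d]$, and \cref{lem:characterization-of-homological-regular-locus} concludes. Your overall reduction to this statement is the same as the paper's, but the core comparison between $\omega_{X,x}$ and $M_\sigma$ is absent from your plan.
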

\begin{proof}
  We have already seen that the image of $j_!$ is contained in $\reg{\Shv X}$, see~\cref{prop:supported-on-gorenstein-implies-regular}. Suppose that $F\in\Shv X$ is regular. We must show that $F$ lies in the essential image of $j_!$, or equivalently that $\supp F$ is contained in the homology manifold locus $\hreg X$.

  Fix $x\in\supp F$. By~\cref{prop:invertible-stalks}, we have
  \[
    \dualizingshf X[x]\simeq\basefield\lbrack d\rbrack
  \]
  for some $d\geq 0$. By~\cref{lem:characterization-of-homological-regular-locus}, it will suffice to show that $\dualizingshf X\vert_{\relint\Delta}\simeq\basefield\lbrack d\rbrack_{\relint\Delta}$ for every simplex $\Delta$ containing $x$. We will split the proof into two cases:
  
\smallskip
\noindent\textit{Case 1.} $\supp F\cap\relint\Delta = \varnothing$.
\smallskip

  Note that we must have $x\not\in\relint\Delta$, and in particular this forces $\dim\Delta > 0$. We can therefore pick an embedded arc $a\colon\lbrack 0,1\rbrack\to X$ such that $a(0) = x$ and $a(t)\in\relint\Delta$ for $t>0$. Let $j'\colon (0,1)\hookrightarrow \lbrack 0,1\rbrack$ denote the inclusion. We consider the sheaf
  \[
    J = (j')_!\basefield_{(0,1)}\in\Shv{\lbrack 0,1\rbrack}.
  \]
  Note also that $J\simeq \dualizingshf{\lbrack 0,1\rbrack}\lbrack -1\rbrack$. We let $e\colon\lbrace 0\rbrace\hookrightarrow\lbrack 0,1\rbrack$ denote the inclusion of the endpoint, so that in particular $x = a\circ e$. There is a map
   \begin{equation}
    \label{eq:comparing-costalks}
    \varphi\colon e^!J\otimes \dualizingshf X[x]\to e^!(J\otimes a^* \dualizingshf X)
  \end{equation}
  adjoint to
  \[
    e_!(e^!J\otimes \dualizingshf X[x]) \simeq e_!(e^!J\otimes e^*a^*\dualizingshf X)
    \simeq e_!e^!J\otimes a^*\dualizingshf X\to J\otimes a^*\dualizingshf X,
  \]
  where the second equivalence is the projection formula and the rightmost map is the counit for the $v_!\dashv v^!$ adjunction. Chasing through adjunctions, we find that there is a commutative diagram
  \[
    \begin{tikzcd}
      \Hom{F}{a_!J}[\Shv X]\arrow[r,"\dualizingshf X\otimes {-}"]\arrow[d,"\simeq"]
      & \Hom{\dualizingshf X\otimes F}{\dualizingshf X\otimes a_!J}[\Shv X]\arrow[d,"\simeq"]\\
      \Hom{F_x}{e^!J}[\mathrm D(\basefield )]\arrow[r]
      & \Hom{F_x}{e^!(J\otimes a^*\dualizingshf X)\otimes\dualizingshf X[x]^{-1}}[\mathrm D(\basefield )],
    \end{tikzcd}
  \]
  where the bottom horizontal arrow is post-composition with $\varphi\otimes\dualizingshf X[x]^{-1}$. Here we have used that $F\vert_{\relint\Delta}\simeq 0$ by assumption to identify $a^*F\simeq e_*F_x$.
    \begin{figure}[h]
  \centering
  \def\svgwidth{0.8\textwidth}
\begingroup%
  \makeatletter%
  \providecommand\color[2][]{%
    \errmessage{(Inkscape) Color is used for the text in Inkscape, but the package 'color.sty' is not loaded}%
    \renewcommand\color[2][]{}%
  }%
  \providecommand\transparent[1]{%
    \errmessage{(Inkscape) Transparency is used (non-zero) for the text in Inkscape, but the package 'transparent.sty' is not loaded}%
    \renewcommand\transparent[1]{}%
  }%
  \providecommand\rotatebox[2]{#2}%
  \newcommand*\fsize{\dimexpr\f@size pt\relax}%
  \newcommand*\lineheight[1]{\fontsize{\fsize}{#1\fsize}\selectfont}%
  \ifx\svgwidth\undefined%
    \setlength{\unitlength}{283.46456693bp}%
    \ifx\svgscale\undefined%
      \relax%
    \else%
      \setlength{\unitlength}{\unitlength * \real{\svgscale}}%
    \fi%
  \else%
    \setlength{\unitlength}{\svgwidth}%
  \fi%
  \global\let\svgwidth\undefined%
  \global\let\svgscale\undefined%
  \makeatother%
  \begin{picture}(1,0.27893999)%
    \lineheight{1}%
    \setlength\tabcolsep{0pt}%
    \put(0,0){\includegraphics[width=\unitlength,page=1]{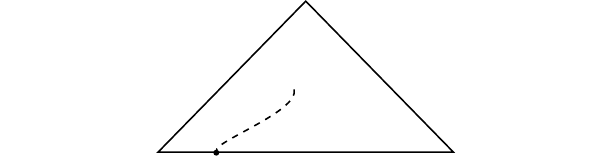}}%
    \put(0.36237895,-0.00767762){\color[rgb]{0,0,0}\makebox(0,0)[lt]{\lineheight{1.25}\smash{\begin{tabular}[t]{l}$x$\end{tabular}}}}%
    \put(0.58313716,0.03770202){\color[rgb]{0,0,0}\makebox(0,0)[lt]{\lineheight{1.25}\smash{\begin{tabular}[t]{l}$\relint\Delta$\end{tabular}}}}%
    \put(0.48286525,0.07403595){\color[rgb]{0,0,0}\makebox(0,0)[lt]{\lineheight{1.25}\smash{\begin{tabular}[t]{l}$e$\end{tabular}}}}%
  \end{picture}%
\endgroup%

  \caption{The arc segment $e$}
  \label{fig:arc-costalk}
\end{figure}
Regularity of $F$ implies that the top horizontal arrow is an equivalence, and since $F_x\not\simeq 0$, we conclude that $\varphi$ is also an equivalence. But $e^!J\otimes\dualizingshf X[x]\simeq \basefield\lbrack d-1\rbrack$ and $e^!(J\otimes a^*\dualizingshf X)\simeq v^!(J\otimes M)\simeq \dualizingshf X[\Delta]\lbrack -1\rbrack$, where we denote $\dualizingshf X[\Delta] = \Glob (\relint\Delta ,\dualizingshf X)$ and use that $\dualizingshf X\vert_{\relint\Delta}$ is locally constant, and so by contractibility of $\relint\Delta$ it is in fact locally constant with value $\dualizingshf X[\Delta]$. We thus conclude that $\dualizingshf X\vert_{\relint\Delta}\simeq\basefield\lbrack d\rbrack$ as desired.

\smallskip
\noindent\textit{Case 2.} $\supp F\cap\relint\Delta \neq \varnothing$.
\smallskip

By~\cref{prop:invertible-stalks}, we find that $\dualizingshf X\vert_{\relint\Delta}\simeq\basefield\lbrack d'\rbrack$ for some $d'\geq 0$. As in Case 1, we pick an embedded arc $a\colon\lbrack 0,1\rbrack\to X$ such that $a(0) = x$ and $a(t)\in\relint\Delta$ for $t>0$, and we let $e\colon\lbrace 0\rbrace\hookrightarrow\lbrack 0,1\rbrack$ denote the endpoint inclusion. We also let $j'\colon (0,1\rbrack\hookrightarrow\lbrack 0,1\rbrack$ denote the inclusion of the open complement of $\lbrace 0\rbrace$. We have a recollement fiber sequence
  \begin{equation}
    \label{eq:recollement-along-arc}
    (j')_!(j')^*a^*\dualizingshf X\to a^*\dualizingshf X\to e_*\dualizingshf X[x].
  \end{equation}
  We wish to show that $d' = d$, so assume for contradiction that this is not the case. The fiber sequence~\labelcref{eq:recollement-along-arc} has attaching map
  \[
    e_*\basefield\lbrack d\rbrack \simeq e_*\dualizingshf X\to (j')_!(j')^*a^*\dualizingshf X\lbrack 1\rbrack\simeq (j')_!\basefield\lbrack d'+1\rbrack,
  \]
  which is adjoint to a map
  \[
    \basefield\lbrack d\rbrack\to e^!(j')_!\basefield\lbrack d'+1\rbrack\simeq \basefield\lbrack d'\rbrack
  \]
  in $\mathrm D(\basefield )$, and since $\basefield$ is a field and $d\neq d'$, this map is zero. It follows that~\labelcref{eq:recollement-along-arc} splits, giving
  \[
    a^*\dualizingshf X\simeq e_*\basefield\lbrack d\rbrack\oplus (j')_!\basefield\lbrack d'\rbrack.
  \]
  Write $F_a = a^*F$ for short. We claim that the counit map $\varepsilon\colon (j')_!(j')^*F_a\to F_a$ also splits. To see this, consider the commutative diagram
  \[
    \begin{tikzcd}
      \Hom{(j')_!(j')^*F_a}{(j')_!(j')^*F_a}
      \arrow[r,"a^*\dualizingshf X\otimes {-}"]
      & \Hom{(j')_!(j')^*F_a\lbrack d'\rbrack}{(j')_!(j')^*F_a\lbrack d'\rbrack}\\
      \Hom{F_a}{(j')_!(j')^*F_a}\arrow[r,"a^*\dualizingshf X\otimes {-}"]\arrow[d]\arrow[u]
      & \Hom{e_*F_x\lbrack d\rbrack\oplus (j')_!(j')^*F_a\lbrack d'\rbrack}{(j')_!(j')^*F_a\lbrack d'\rbrack}\arrow[d]\arrow[u]\\
      \Hom{F}{a_*(j')_!(j')^*F_a}\arrow[r,"\dualizingshf X\otimes {-}"]
      & \Hom{\dualizingshf X\otimes F}{\dualizingshf X\otimes a_*(j')_!(j')^*F_a},
    \end{tikzcd}
  \]
  where the top vertical arrows are given by precomposition with $\varepsilon$.
  Here the bottom horizontal arrow is an equivalence by regularity of $F$ and the bottom vertical arrows are equivalences by adjunction and the projection formula, so it follows that the middle horizontal arrow is an equivalence. In particular there is a map $\eta\colon F_a\to (j')_!(j')_!F_a$ lifting the projection $e_*F_x\lbrack d\rbrack\oplus (j')_!(j')^*F_a\lbrack d'\rbrack\to (j')_!(j')^*F_a\lbrack d'\rbrack$. Chasing through the top square we find that this map must split $\varepsilon$ as claimed.

  We next use regularity of $F$ against the sheaf $a_*\basefield_{\lbrack 0,1\rbrack}$. Write
  \begin{align*}
    &\dualizingshf X[a] = a^*\dualizingshf X, \quad\dualizingshf X[a]^0 = e_*\dualizingshf X[x],\quad \dualizingshf X[a]^{>0} =  j'_!(j')^*\dualizingshf X[a], \\
    &F_a^0 = e_*F_x,\quad\text{and}\quad F_a^{>0} = j'_!(j')^* F_a.
  \end{align*}
  We have seen that
  \[
    \dualizingshf X[a]\simeq\dualizingshf X[a]^0\oplus \dualizingshf X[a]^{>0}\quad\text{and}\quad
    F_a\simeq F_a^0\oplus F_a^{>0}.
  \]
  Using the projection formula, we calculate
  \[
    \dualizingshf X[a]^0\otimes F_a^0\simeq F_a^0\lbrack d\rbrack,\quad
    \dualizingshf X[a]^{>0}\otimes F_a^{>0}\simeq F_a^{>0}\lbrack d'\rbrack,\quad
    \dualizingshf X[a]^{>0}\otimes F_a^0\simeq 0,\,\,\text{and}\quad
    \dualizingshf X[a]^{0}\otimes F_a^{>0}\simeq 0.
  \]  
  From this we see that the regularity equivalence 
  \[
    \Hom{F}{a_*\basefield_{\lbrack 0,1\rbrack}}\xrightarrow{\dualizingshf X\otimes{-}}\Hom{\dualizingshf X\otimes F}{\dualizingshf X\otimes a_*\basefield_{\lbrack 0,1\rbrack}}
  \]
  is the direct sum of maps
  \begin{equation}
    \label{eq:partthree-stalk-map}
    \Hom {F_a^0}{\basefield_{\lbrack 0,1\rbrack}}\to \Hom {F_a^0\lbrack d\rbrack}{\dualizingshf X[a]^0}\oplus \Hom {F_a^0\lbrack d\rbrack}{\dualizingshf X[a]^{>0}}
  \end{equation}
  and
  \[
    \Hom {F_a^{>0}}{\basefield_{\lbrack 0,1\rbrack}}\to \Hom {F_a^{>0}\lbrack d\rbrack}{\dualizingshf X[a]^0}\oplus \Hom {F_a^{>0}\lbrack d'\rbrack}{\dualizingshf X[a]^{>0}}
  \]
  both of which must therefore be equivalences. A local calculation shows that $e^!\basefield_{\lbrack 0,1\rbrack}\simeq 0$, so
  \[
    \Hom {F_a^0}{\basefield_{\lbrack 0,1\rbrack}} \simeq\Hom{e_*F_x}{\basefield_{\lbrack 0,1\rbrack}}\simeq\Hom{F_x}{e^!\basefield_{\lbrack 0,1\rbrack}}\simeq 0,
  \]
  see e.g.~\cite[Lem~2.26, p.~91]{Harr2025c}. On the other hand,
  \[
     \Hom {F_a^0\lbrack d\rbrack}{\dualizingshf X[a]^0}\simeq\Hom{F_x}{\basefield}.
   \]
   Since~\labelcref{eq:partthree-stalk-map} is an equivalence, we must then have
   $\Hom{F_x}{\basefield}\simeq 0$, whence $F_x\simeq 0$, contradiction.
\end{proof}

\section{Reconstruction of graphs and surfaces}
By applying the results of the previous sections, we can now prove~\cref{mainthm:reconstruction}. If $(X,Y)$ is an FM pair, we will say that $Y$ is an \tdef{FM partner} to $X$.

We first make some prelimary observations.
\begin{defn}
  Let $\mathcal C$ be a stable category. A full subcategory $\mathcal C_0\subseteq\mathcal C$ is called \tdef{clopen} if there is a decomposition $\id_{\mathcal C}\simeq F_0\oplus F_1$ such that $\mathcal C_0$ is equivalent to the essential image of $F_0$ (as a subobject of $\mathcal C$). The category $\mathcal C$ is said to be \tdef{connected} if every clopen subcategory of $\mathcal C$ is zero or equivalent to $\mathcal C$ (as a subobject of $\mathcal C$).
\end{defn}
Let $k$ be a commutative ring without non-trivial idempotents. Using the equivalence
\[
  \begin{tikzcd}
    \Shv{X\times X}[\mathrm D(k)]\arrow[r]
    &  \Fun^L(\Shv X[\mathrm D(k)],\Shv X[\mathrm D(k)] )\\[-15pt]
    K\arrow[r,mapsto]
    & (p_2)_!(K\otimes p_1^*{-}),
  \end{tikzcd}
\]
where $p_1$ and $p_2\colon X\times X\to X$ denote the projections onto the first and second factor, it is straightforward to show that a subcategory of $\Shv X[\mathrm D(k)]$ is clopen if and only if it is the subcategory of sheaves supported on a clopen subset of $X$.
\begin{defn}
  Let $\mathcal D$ be a dualizable stable category with Serre functor $S_{\mathcal D}$. The \tdef{Calabi--Yau dimension} of $\mathcal D$, denoted $\mdef{\dim_{\mathrm{CY}}(\mathcal D)}$, is given by
  \[
    \dim_{\mathrm{CY}}(\mathcal D)
    = \lbrace d\in\ZZ_{\geq 0}\colon S_{\mathcal D}({-})\vert_{\mathcal C}\simeq ({-})\lbrack d\rbrack\text{ for some nonzero clopen }\mathcal C\subseteq\mathcal D\rbrace\subseteq\ZZ_{\geq 0}.
  \]
\end{defn}
Using that every $\finitefield_2$-homology manifold has a $\finitefield_2$-orientation, we see that
\begin{prop}\label{prop:dim-of-spaces}
  Let $X$ be a locally compact triangularizable space. Then $X$ is finite dimensional if and only if $\dim_{\mathrm{CY}}(\reg{\Shv X[\mathrm D(\finitefield_2)]})$ is nonempty and bounded, and in the affirmative case
  \[
    \dim X = \max \dim_{\mathrm{CY}}(\reg{\Shv X[\mathrm D(\finitefield_2)]})
  \]
\end{prop}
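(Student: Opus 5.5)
We use \cref{thm:noncomm-regularity} with $\basefield = \finitefield_2$: the Gorenstein locus $\reg{\Shv X[\mathrm D(\finitefield_2)]}$ is the essential image of $j_!$, where $j\colon\hreg X\hookrightarrow X$ is the inclusion. Since $j_!$ is fully faithful, this identifies it with $\Shv{\hreg X}[\mathrm D(\finitefield_2)]$. We first check that this subcategory is dualizable, so that $\dim_{\mathrm{CY}}$ makes sense. The next point is that its Serre functor is $\dualizingshf{\hreg X}\otimes{-}$. This follows from the Proposition computing Serre functors on sheaf categories, applied to the locally compact Hausdorff space $\hreg X$, together with $\dualizingshf{\hreg X}\simeq j^*\dualizingshf X$. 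The problem therefore reduces to the homology manifold $M = \hreg X$.

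Next we describe the clopen subcategories. Since $\finitefield_2$ has no non-trivial idempotents, the discussion after the definition of clopen subcategories shows that clopen subcategories of $\Shv M[\mathrm D(\finitefield_2)]$ are exactly the sheaves supported on clopen subsets $C\subseteq M$. On each connected component of $M$ the sheaf $\dualizingshf M$ is locally constant with invertible stalks $\finitefield_2[d]$, where $d$ is the local dimension; $d$ is constant on components. Because every $\finitefield_2$-homology manifold is $\finitefield_2$-orientable ($\mathrm{Aut}(\finitefield_2)$ is trivial), $\dualizingshf M|_C\simeq \finitefield_2[d]_C$ on the union $C$ of components of local dimension $d$. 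Such a $C$ is clopen, and on it the Serre functor is $[d]$. Conversely, suppose $S|_{\mathcal C}\simeq[d]$ on a nonzero clopen $\mathcal C$ supported on $C$. Evaluating on skyscraper sheaves $x_*\finitefield_2$ for $x\in C$ gives $\dualizingshf M[x]\simeq \finitefield_2[d]$. Hence $\dim_{\mathrm{CY}}$ is precisely the set of local dimensions of $M$.

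Finally we relate the local dimensions of $\hreg X$ to $\dim X$. Fix a triangulation. By the non-emptiness Proposition, the interior of every maximal simplex $\Delta$ lies in $\hreg X$, with local dimension $\dim\Delta$. Conversely, every point $y$ of $\hreg X$ lies in the relative interior of some simplex contained in a maximal simplex $\Delta'$. Local constancy of $\dualizingshf X$ near $y$, together with $\dualizingshf X|_{\relint\Delta'}\simeq\finitefield_2[\dim\Delta']$, forces the local dimension at $y$ to equal $\dim\Delta'$; this is the argument in the Remark on Goresky--MacPherson strata. So the set of local dimensions of $\hreg X$ is exactly the set of dimensions of maximal simplices. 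This set is nonempty because $X$ is nonempty; we take $X\neq\varnothing$ implicitly. It is bounded if and only if $X$ is finite dimensional, since every simplex lies in a maximal one in a locally finite complex, and $\dim X$ is then its maximum.

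The main obstacle is the step passing between the subcategory $\reg{\mathcal D}$ and $\Shv{\hreg X}$. We must make sure that the Serre functor and the clopen notions used in $\dim_{\mathrm{CY}}(\reg{\Shv X})$ are those of $\Shv{\hreg X}$ as a dualizable category in its own right, and not restrictions of $S_{\Shv X}$. We handle this by noting that $j^*\dualizingshf X\simeq\dualizingshf{\hreg X}$, so that $S_{\Shv X}\circ j_!\simeq j_!\circ S_{\Shv{\hreg X}}$ by the projection formula. Both readings then agree.
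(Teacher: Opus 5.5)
Your proposal is correct and takes the same route as the paper, which proves this only through the one-line remark preceding the statement: identify the Gorenstein locus with $\Shv{\hreg X}[\mathrm D(\finitefield_2)]$ via \cref{thm:noncomm-regularity}, use $\finitefield_2$-orientability to see that the Serre functor is the shift by the local dimension on each clopen piece, and match local dimensions of $\hreg X$ with dimensions of maximal simplices. Your write-up fills in details the paper leaves implicit: compatibility of the Serre functors under $j_!$, the skyscraper test for the converse, and local finiteness of the triangulation.
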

\begin{proof}[Proof of~\cref{mainthm:reconstruction}]
  (i) Let $\Gamma$ be a second-countable 1-dimensional triangularizable space, and let $\Gamma '$ be a triangularizable FM partner to $\Gamma$. From~\cref{thm:second-countability-recovers-notion-from-topology,prop:dim-of-spaces} we see that $\Gamma '$ is itself second-countable and 1-dimensional. Note that every point in $\Gamma$ (and $\Gamma '$) has an open neighborhood homeomorphic to a space of the form
  \[
    \mdef{C_v} =
    \begin{cases}
      \bigcup_{i=1}^v\RR_{\geq 0}e_i\subseteq\RR^v,&\text{if }v\geq 1,\\
      \pt,&\text{if }v=0,
    \end{cases}
  \]
  where $e_1,\dots ,e_v$ is the canonical basis of $\RR^v$. We refer to $C_v$ as the \tdef{corolla of valence $v$}. We say that the \tdef{valence} of a point $x$ in $\Gamma$ (or $\Gamma '$) is $v$ if $x$ has an open neighborhood homeomorphic to $C_v$. Every point has a well-defined (unique) valence. Furthermore, we have
  \[
    \hreg\Gamma = \lbrace x\in\Gamma\mid x \text{ has valence two or zero}\rbrace
  \]
  and $\Gamma\setminus\hreg\Gamma$ is a discrete closed subspace of $\Gamma$. By the classification of second-countable 1-dimensional manifolds, every component of $\hreg\Gamma$ is either homeomorphic to $\pt$, $S^1$ or $\RR$. Labelling each component of $\hreg\Gamma$ by its homeomorphism type defines a function
  \[
    L\colon\pi_0\hreg\Gamma\too\lbrace \pt , S^1,\RR \rbrace
  \]
  Furthermore, if $E\in L^{-1}(\RR )$, either end of $E$ may converge to a point in $\Gamma\setminus\hreg\Gamma$ (an \tdef{endpoint}). Note that both ends may converge to the same point. That is, the endpoint data consists of between zero and two points with multiplicity in $\Gamma\setminus\hreg\Gamma$, with total multiplicity $\leq 2$. This is recorded by a function
  \begin{equation}
    \label{eq:endpoint-data-function}
    L^{-1}(\RR )\too
    ((\Gamma\setminus\hreg\Gamma )_+\times(\Gamma\setminus\hreg\Gamma )_+ )/\mathfrak C_2,
  \end{equation}
  where $\mathfrak C_2$ is the cyclic group of order two, which acts on the product by swapping factors. The homeomorphism type of $\Gamma$ is uniquely determined by (i) the sets $\pi_0\hreg\Gamma$ and $\Gamma\setminus\hreg\Gamma$, the labelling function $L$, and the endpoint data~\labelcref{eq:endpoint-data-function}. 

  From~\cref{mainthm:gorenstein-locus} and the observatiosn preceding this proof, we see that
  \[
    \begin{tikzcd}
      \pi_0\hreg\Gamma\arrow[r]
      &\bigg\lbrace
    \begin{array}{l}
      \text{minimal non-trivial clopen}\\
      \text{subcategories of }\reg{\Shv\Gamma[\mathrm D(k)]}
    \end{array}
    \bigg\rbrace \\[-15pt]
    E\arrow[r,mapsto]
    & \text{subcategory of sheaves supported on }E
    \end{tikzcd}
  \]
  is a bijection for an arbitrary field $k$. Let us write $\mathcal D_E\subseteq\reg{\Shv\Gamma[\mathrm D(k)]}\subseteq\Shv\Gamma[\mathrm D(k)]$ for the subcategory of sheaves supported on $E$. We can tell the homeomorphism type of $E$ from Efimov's theorem~\cite[Thm~6.1]{Efimov2025}, which gives
  \[
    \mathrm{HH}(\mathcal D_E)
    \simeq\cptGlob (X,k_X)\simeq
    \begin{cases}
      k\oplus k\lbrack -1\rbrack,&\text{if }E\cong S^1,\\
      k\lbrack -1\rbrack,&\text{if }E\cong\RR,\\
      k,&\text{if } E\cong\pt.
    \end{cases}
  \]
  It remains only to determine the endpoint data~\labelcref{eq:endpoint-data-function}. The Verdier sequence
  \[
    \reg{\Shv\Gamma[\mathrm D(k)]}\too\Shv\Gamma[\mathrm D(k)]\too \Shv\Gamma[\mathrm D(k)] / \reg{\Shv\Gamma[\mathrm D(k)]}
  \]
  is equivalent to the recollement
  \[
    \Shv{\hreg\Gamma}[\mathrm D(k)]\xtoo{j_!}
    \Shv{\Gamma}[\mathrm D(k)]\xtoo{i^*}
    \Shv{\Gamma\setminus\hreg\Gamma}[\mathrm D(k)],
  \]
  where $j\colon\hreg\Gamma\hookrightarrow\Gamma$ and $i\colon \Gamma\setminus\hreg\Gamma\to\Gamma$ denote the inclusions. In particular, we recover the gluing functor
  \[
    i^*j_*\colon\Shv{\hreg\Gamma}[\mathrm D(k)]\too\Shv{\Gamma\setminus\hreg\Gamma}[\mathrm D(k)].
  \]
  Let $\mathcal D_E\subseteq\reg{\Shv\Gamma[\mathrm D(k)]}$ be a minimal non-trivial clopen subcategory corresponding to a component $E\subseteq\hreg\Gamma$, and let $\mathcal D_x\subseteq \Shv\Gamma[\mathrm D(k)] / \reg{\Shv\Gamma[\mathrm D(k)]}$ be a minimal non-trivial clopen subcategory corresponding to a component/point $x\in\Gamma\setminus\hreg\Gamma$. Consider the composition
  \begin{equation}
    \label{eq:restricted-gluing}
    \mathcal D_E\hookrightarrow\reg{\Shv\Gamma[\mathrm D(k)]}\xtoo{i^*j_*}\Shv{\Gamma\setminus\hreg\Gamma}[\mathrm D(k)]\to\mathcal D_x,
  \end{equation}
  where the rightmost functor is left (and in fact also right) adjoint to the inclusion. A straightforward sheaf calculation shows that~\labelcref{eq:restricted-gluing} is nonzero if and only if $x\in\overline E\setminus E$. As for the multiplicity,~\labelcref{eq:restricted-gluing} is itself the gluing functor for the recollement associated with the open-closed decomposition of $E\cup\lbrace x\rbrace$ into $E$ and $\lbrace x\rbrace$. Hence we recover $\Shv{E\cup\lbrace x\rbrace}[\mathrm D(k)]$ from~\labelcref{eq:restricted-gluing} as in e.g.~\cite[Rem~A.8.12]{Lurie2017}. But then we can again use $\contsgHH{-}$ or the existence of nonzero compact objects to test whether $E\cup\lbrace x\rbrace$ is homeomorphic to $\RR_{\geq 0}$ or $S^1$, or in other words whether the multiplicity of $x$ is one or two (respectively) as an endpoint of $E$.

  (ii) Let $\Sigma$ be a second-countable surface, and let $X$ be a triangularizable FM partner to $\Sigma$. Using the discussion preceding the proof, we may assume that both are connected. We will first assume that $\Sigma$ is without boundary (but possibly non-compact). Then the Serre functor $\dualizingshf X\otimes{-}$ must act invertibly on $\reg{\Shv X[\mathrm D(\ZZ )]}$, whence by~\cite[Cor~2.5.4.12]{Martini_Wolf2022} $X$ is a homology manifold. We also know that it is 2-dimensional from~\cref{prop:dim-of-spaces}. But 2-dimensional homology manifolds are topological manifolds by a theorem of Wilder, and so by the classification of second-countable 2-dimensional topological manifolds it suffices to observe that we can recover the compactly supported cohomology $\mathrm C^\bullet_{\mathrm c}(\Sigma ;\ZZ )$ as the $\ZZ$-linear Hochschild homology complex $\mathrm{HH}(\Shv{\Sigma}[\mathrm D(\ZZ )])$ by Efimov's theorem~\cite[Thm~6.1]{Efimov2025} and the number of punctures of $\Sigma$ as the rank of $\pi_{-3}\contsgHH{\Shv \Sigma [\mathrm D(\ZZ )]}$ by~\cref{cor:number-of-ends}. If $\Sigma$ has boundary, note that the Verdier sequence
  \[
    \hreg{\Shv{\Sigma}[\mathrm D(k )]}\too \Shv{\Sigma}[\mathrm D(k )]\too
    \Shv{\Sigma}[\mathrm D(k )] / \hreg{\Shv{\Sigma}[\mathrm D(k )]}
  \]
  (for an arbitrary field $k$) is equivalent to the recollement
  \[
    \Shv{\Sigma\setminus\partial\Sigma}[\mathrm D(k )]\too
    \Shv{\Sigma}[\mathrm D(k )]\too
    \Shv{\partial\Sigma}[\mathrm D(k )].
  \]
  We have seen how to recover the homeomorphism types of both $\partial\Sigma$ (which is 1-dimensional, and more specifically a disjoint union of circles) and $\Sigma\setminus\partial\Sigma$. But then $\Sigma$ is the unique (up to homeomorphism) surface with $\mathrm C^\bullet_{\mathrm c}(\Sigma ;\ZZ )\simeq \mathrm{HH}(\Shv{\Sigma}[\mathrm D(\ZZ )])$,
  \[
    b = \#\lbrace
    \text{minimial non-trivial clopen subcategories of }\Shv{\partial\Sigma}[\mathrm D(k )]
    \rbrace
  \]
  many boundary components, and
  \[
    \operatorname{rank}\pi_{-3}\contsgHH{\Shv{\Sigma\setminus\partial\Sigma} [\mathrm D(\ZZ )]} - b
  \]
  many punctures.
  
  (iii) Let $X$ be a triangularizable FM partner to $S^3$. As in (ii), we see that $X$ must be a connected closed 3-dimensional homology manifold. But triangularizable 3-dimensional homology manifolds are topological manifolds. By Perelman's proof of the 3-dimensional Poincar\'e conjecture~\cite{Perelman2002,Perelman2003}, it will therefore suffice to show that $X$ is simply-connected. By Perelman's proof of the geometrization conjecture combined with Hempel's theorem on fundamental groups of Haken manifolds~\cite[Thm~1.1, see Cor~1.2]{Hempel1987}, we know that the fundamental group $\pi_1X$ is residually finite. If for contradiction $\pi_1X\neq 0$, then $\pi_1X$ must admit a non-trivial finite dimensional complex representation $V$. But then $\lbrack V\rbrack$ and the class of the trivial representation $\lbrack\CC^{\mathrm{triv}}\rbrack$ are $\ZZ$-linearly independent in $\mathrm K_0(\mathrm D^{\mathrm b}(\CC\lbrack\pi_1X\rbrack ))$, whence
  \begin{equation}
    \label{eq:non-trivial-class-in-k-thy}
    \operatorname{rank}\mathrm K_0(\mathrm D^{\mathrm b}(\CC\lbrack\pi_1X\rbrack ))\geq 2.
  \end{equation}

  On the other hand, from the classification of compact objects in derived categories of locally compact spaces~\cite[Thm~2.6]{Harr2025} and Lurie's monodromy theorem~\cite[Thm~A.1.15]{Lurie2017}, we see that
  \[
    \Fun (\Pi_\infty S^3,\mathrm D^{\mathrm b}(\CC ))\simeq
    \Shv{S^3}[\mathrm D(\CC )]^{\aleph_0}\simeq
    \Shv X[\mathrm D(\CC )]^{\aleph_0}\simeq \Fun (\Pi_\infty X,\mathrm D^{\mathrm b}(\CC )),
  \]
  where $\Pi_\infty({-})$ denotes the fundamental $\infty$-groupoid functor. From Barwick's theorem of the heart~\cite{Barwick2015} applied to the pointwise t-structure on local systems, we also have
  \[
    \mathrm K_0(\Fun (\Pi_\infty X,\mathrm D^{\mathrm b}(\CC )))\simeq
    \mathrm K_0(\mathrm D^{\mathrm b}(\CC\lbrack\pi_1X\rbrack ))
  \]
  and
  \[
    \mathrm K_0(\Fun (\Pi_\infty S^3,\mathrm D^{\mathrm b}(\CC )))\simeq
    \mathrm K_0(\mathrm D^{\mathrm b}(\CC\lbrack\pi_1S^3\rbrack ))\simeq\ZZ.
  \]
  But this contradicts~\labelcref{eq:non-trivial-class-in-k-thy}, finishing the proof.
\end{proof}
We mentioned in the introduction that there are many non-trivial FM pairs among non-triangularizable spaces. Aside from non-sober spaces, non-trivial FM pairs are also abundant among Alexandrov spaces: 
\begin{exam}\label{exam:verdier-duality-for-posets}
Let $P$ be the face poset of a locally finite simplicial complex (or a Verdier poset in the sense of~\cite{Aoki2023}) with $P\not\cong P^{\mathrm{op}}$,\footnote{e.g.~a simplex of dimension $\geq 2$} and let $\operatorname{Alex}(P)$ and $\operatorname{Alex}(P^{\mathrm{op}})$ denote the associated Alexandrov/finitely-generated spaces. Then
  \[
    \Shv{\operatorname{Alex}(P)}[\modules[\basefield]]\simeq\Fun (P,\modules[\basefield] )\simeq \Fun (P^{\mathrm{op}},\modules[\basefield])\simeq\Shv{\operatorname{Alex}(P^{\mathrm{op}})}[\modules[\basefield]]
  \]
  by~\cites[][Thm~7.7]{Curry2018}[][Thm~A]{Aoki2023} but $\operatorname{Alex}(P)\not\cong\operatorname{Alex}(P^{\mathrm{op}})$ since the functor $\operatorname{Alex}({-})$ is fully faithful.
\end{exam}

\printbibliography{}
\end{document}